\theoremstyle{plain}
\newtheorem{thm}{Theorem}
\newtheorem{lem}[thm]{Lemma}
\newtheorem{prop}[thm]{Proposition}
\newtheorem{cor}[thm]{Corollary}
\theoremstyle{definition}
\newtheorem{dfn}[thm]{Definition}
\theoremstyle{remark}
\newcommand{\End}{{\rm End}\,}
\newcommand{\Res}{{\rm Res}\,}
\newcommand{\Ind}{{\rm Ind}\,}
\def \<{\langle}
\def \>{\rangle}
\begin{document}

\title{On $\mathbb{N}$-graded vertex algebras associated with cyclic Leibniz algebras with small dimensions}
\author{C. Barnes}
\address{Department of Mathematics\\ Illinois State University, Normal, IL, 61790, USA}
\email{cnbarn1@ilstu.edu}
\author{E. Martin}
\address{Department of Mathematics\\ Illinois State University, Normal, IL, 61790, USA}
\email{ermart2@ilstu.edu}
\author{J. Service}
\address{Department of Mathematics\\ Illinois State University, Normal, IL, 61790, USA}
\email{jbserv1@ilstu.edu}
\author{G.Yamskulna}
\address{Department of Mathematics\\ Illinois State University, Normal, IL, 61790, USA}
\email{gyamsku@ilstu.edu}
\thanks{G. Yamskulna is supported by the College of Arts and Sciences, Illinois State University.}
\keywords{Indecomposable, irrational vertex algebras}
\thanks{Corresponding author: G. Yamskulna}

\begin{abstract} The main goals for this paper is i) to study of an algebraic structure of $\mathbb{N}$-graded vertex algebras $V_B$ associated to vertex $A$-algebroids $B$ when $B$ are cyclic non-Lie left Leibniz algebras, and ii) to explore relations between the vertex algebras $V_B$ and the rank one Heisenberg vertex operator algebra. To achieve these goals, we first classify vertex $A$-algebroids $B$ associated to given cyclic non-Lie left Leibniz algebras $B$. Next, we use the constructed vertex $A$-algebroids $B$ to create a family of indecomposable non-simple vertex algebras $V_B$. Finally,  we use the algebraic structure of the unital commutative associative algebras $A$ that we found to study relations between a certain type of the vertex algebras $V_B$ and the vertex operator algebra associated to a rank one Heisenberg algebra.  \end{abstract}
\maketitle

\section{Introduction}

A study of $\mathbb{N}$-graded vertex algebras $V=\oplus_{n=0}^{\infty}V_n$ when $\dim V_0\geq 2$ is far from being completed. For this case, $V_0$ is a unital commutative associative algebra, and $V_1$ is a Leibniz algebra. The skew symmetry and Jacobi identity of vertex algebra give rise to compatible extra relations. Indeed, these additional relations on $V_0\oplus V_1$ are summarized in the notion of a vertex $V_0$-algebroid $V_1$ (see \cite{GMS, MS1, MS2, MSV}). It was shown in \cite{GMS} that for a given vertex $A$-algebroid $B$, one can construct an $\mathbb{N}$-graded vertex algebra $V=\oplus_{n=0}^{\infty}V_n$ such that $V_0=A$ and $V_1=B$. Also, the classification of graded simple twisted and non-twisted modules of vertex algebras associated with vertex algebroids were studied in \cite{LiY, LiY2} by Li and the last author of this paper. 

In \cite{JY}, among many things, Jitjankarn and the last author of this paper show that for an $\mathbb{N}$-graded vertex algebra $V=\oplus_{n=0}^{\infty}V_n$ such that $2\leq \dim V_0<\infty$ and $1<\dim V_1<\infty$, if $V_0$ is a local algebra then $V$ is an indecomposable vertex algebra. Also, if $V$ is generated by $V_0$ and $V_1$, and $V_0$ is a local algebra and is not a simple module for a Lie $V_0$-algebroid $V_1/(V_0)_{-1}\partial(V_0)$, then $V$ is an indecomposable non-simple vertex algebra. In \cite{JY2}, Jitjankarn and the last author of this paper examined an algebraic structure of an indecomposable non-simple $\mathbb{N}$-graded vertex algebra associated with a vertex $A$-algebroid $B$ such that $B$ is (semi-)simple Leibniz algebra that has $sl_2$ as its Levi factor. The one-to-one correspondence between the set of representatives of equivalence classes of simple $sl_2$-modules and the set of representatives of equivalence classes of the $\mathbb{N}$-graded simple $V_B$-modules was established. In addition, Jitjankarn and the last of author of this paper showed that a certain quotient space of $V_B$ is an indecomposable non-simple vertex algebra that satisfies the $C_2$-condition and has only two irreducible modules. These two irreducible modules are in fact isomorphic to irreducible modules of the rational $C_2$-cofinite $CFT$-type vertex operator algebra $V_{\mathbb{Z}\alpha}$ associated with a rank one lattice $\mathbb{Z}\alpha$ equipped with a bilinear form $(~,~)$ such that $(\alpha,\alpha)=2$. In \cite{BuY}, Bui and the last author of this paper generalized the results in \cite{JY2} to the case when a vertex $A$-algebroid $B$ is an arbitrary finite-dimensional simple Leibniz algebras. Indecomposable non-simple $C_2$-cofinite $\mathbb{N}$-graded vertex algebras $U_B$ were established. A one-to-one correspondence between these irreducible $U_B$-modules and irreducible modules of certain types of rational $C_2$-cofinite $CFT$-type affine vertex algebras was exhibited as well. Note that the results in \cite{BuY, JY2} provided new families of irrational $C_2$-cofinite vertex algebras.

The work in this paper is a stepping stone of our attempt to understand an algebraic structure $\mathbb{N}$-graded vertex algebras $V_B$ associated to vertex $A$-algebroids $B$ when $B$ are no longer (semi) simple Leibniz algebras and their connections to many well known $\mathbb{N}$-graded vertex algebras $V=\oplus_{n=0}^{\infty}V_n$ such that $\dim V_0=1$.  We are also interested in roles of the commutative associative algebras $A$ that play on a study of representation theory of $\mathbb{N}$-graded vertex algebras.

In this paper, we construct indecomposable non-simple $\mathbb{N}$-graded vertex algebras $V_B$ associated to vertex $A$-algebroids $B$ when $B$ are non-Lie cyclic left Leibniz algebra. In addition we investigate relationships between the constructed indecomposable non-simple vertex algebras $V_B$ and a rank one Heisenberg vertex operator algebra. As mentioned above, to study indecomposable non-simple properties of $\mathbb{N}$-graded vertex algebras $V=\oplus_{n=0}^{\infty}V_n$ that are generated by $V_0$ and $V_1$, we only need to study the algebraic structure of vertex $V_0$-algebroid $V_1$. Therefore, it is natural to begin our study by classifying and investigating an algebraic structure of vertex $A$- algebroids $B$ associated to non-Lie cyclic Leibniz algebras $B$. This classification of vertex $A$-algebroids $B$ help us identify connections between the indecomposable non-simple vertex algebra $V_B$ and a rank one Heisenberg vertex operator algebra. In fact, commutative associative algebras $A$ are the main tools for pining down possible relations between the indecomposable non-simple vertex algebras $V_B$ and a rank one Heisenberg vertex operator algebra. We show that appropriate quotient spaces of a certain type of indecomposable, non-simple vertex algebras associated to vertex $A$-alebroids $B$ are in fact a rank one Heisenberg vertex operator algebras.

In Section 2, we provide background on cyclic left Leibniz algebras and vertex algebroids. In Section 3, we classify vertex algebroids associated to cyclic left Leibniz algebras. Precisely, in subsections 3.1 and 3.2, we classify vertex algebroids associate to 2-dimensional non-Lie cyclic left Leibniz algebras and 3-dimensional non-Lie cyclic left Leibniz algebra, respectively. A review for a construction of vertex algebras associated to vertex algebroids and their modules is provided in subsection 4.1. Discussions about representation theory of vertex algebras associated to vertex $A$-algebroids and their modules when $B$ are cyclic non-Lie left Liebniz algebras are in subsection 4.2. Exploration of relationships between these vertex algebras and the vertex operator algebra associated with a rank one Heisenberg algebra is in subsection 4.3. 
%%%%%%%%%%%%%%%%%%%%%%%%%%%%%%%%%%%%%%%%%%
\section{A review on cyclic Leibniz algebras and vertex algebroids}

We begin by providing background on non-Lie cyclic Leibniz algebras that we will use in this paper. After that we recall definitions of a 1-truncated conformal algebra, a vertex algebroid, a Lie algebroid, and discuss about modules of Lie algebroids. 
\begin{dfn}\cite{DMS, FM} A {\em left Leibniz algebra} $\mathfrak{L}$ is a $\mathbb{C}$-vector space equipped with a bilinear map $[~,~]:\mathfrak{L}\times\mathfrak{L}\rightarrow\mathfrak{L}$ satisfying the Leibniz identity $$[a,[b,c]]=[[a,b],c]+[b,[a,c]]$$ for all $a,b,c\in\mathfrak{L}$.

Let $I$ be a subspace of $\mathfrak{L}$. $I$ is a {\em left} (respectively, {\em right}) {\em ideal} of $\mathfrak{L}$ if $[\mathfrak{L}, I]\subseteq I$ (respectively, $[I,\mathfrak{L}]\subseteq I$). $I$ is an {\em ideal} of $\mathfrak{L}$ if it is both a left and a right ideal. 
\end{dfn}
%%%%%%%%%%%%%%%%%%%%%%%%%%%%%%%%%%%%%%%%%%

Let $\mathfrak{L}$ be a left Leibniz algebra and let $u\in \mathfrak{L}$. Now, we fix the notation $u^1=u$, $u^2=[u,u]$, and in general, $u^{n+1}=[u,u^n]$ for $n\geq 1$. Clearly, $[[u,u],y]=0$, and more generally $[u^n,y]=0$ for all $y\in \mathfrak{L}$, $n\geq 2$.
%%%%%%%%%%%%%%%%%%%%%%%%%%%%%%%%%%%%%%%%

\begin{dfn} Let $\mathfrak{L}$ be a Leibniz algebra. $\mathfrak{L}$ is cyclic if and only if there exists some $u\in\mathfrak{L}$ such that $\mathfrak{L}=\langle u\rangle=Span\{u^k~|~k=1,2,....\}$. If $\mathfrak{L}=\langle v\rangle$, we call $v$ a generator of $\mathfrak{L}$. 
\end{dfn}
%%%%%%%%%%%%%%%%%%%%%%%%%%%%%%%%%%%%%%%%

\begin{prop}\label{cyclicclass}\cite{BuHLSS, DMS}

\ \ 

\begin{enumerate} 

\item For a non-Lie left Leibniz algebra $B$ such that $\dim B=2$, $B$ is isomorphic to a cyclic left Leibniz algebra generated by $b$ with either $[b,b^2]=0$ or $[b,b^2]=b^2$. 

\item A complete list of cyclic non-Lie left Leibniz algebras $B$ such that $\dim B=3$ with non-zero bracket $[~,~]$ is the following:
\begin{enumerate}
    \item $[x,x]=y$, $[x,y]=z$.
    \item $[z,y]=y$; $[z,x]=\alpha x$, $\alpha\in\mathbb{C}\backslash\{0\}$ and $\alpha\neq 1$.
    \item $[z,x]=x+y$; $[z,y]=y$.
    \item $[z,x]=y$; $[z,y]=y$; $[z,z]=x$.
\end{enumerate}
\end{enumerate}
\end{prop}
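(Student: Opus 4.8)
The proof divides into the two parts, both resting on one structural observation: in a cyclic left Leibniz algebra $B=\langle u\rangle$ the identity $[u^n,y]=0$ ($n\ge 2$) shows that $W:=\mathrm{Span}\{u^k : k\ge 2\}$, which equals the span of all squares $[a,a]$ and is thus a characteristic ideal, satisfies $[w,\,\cdot\,]=0$ for $w\in W$. Hence the entire algebra is governed by the single operator $T:=[u,\,\cdot\,]$, which shifts $u^k\mapsto u^{k+1}$.

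For Part (1), I would first establish cyclicity. Since $B$ is non-Lie with $\dim B=2$, choose $b$ with $b^2:=[b,b]\ne 0$; from $[b^2,b]=0$ one sees $b^2\notin\C b$, so $\{b,b^2\}$ is a basis and $B=\langle b\rangle$. Writing $[b,b^2]=\alpha b+\beta b^2$, the Leibniz identity on the triple $(b,b^2,b)$ gives $0=[b,[b^2,b]]=[[b,b^2],b]+[b^2,[b,b]]=\alpha b^2$, forcing $\alpha=0$. Finally, rescaling $b\mapsto cb$ multiplies $\beta$ by $c$, so either $\beta=0$ or $\beta$ normalizes to $1$, yielding the two claimed forms.

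For Part (2), let $B=\langle u\rangle$ with $\dim B=3$. I would first show $\{u,u^2,u^3\}$ is automatically a basis: if $u^3\in\mathrm{Span}\{u,u^2\}$, then since $T$ preserves this span the whole tower would lie in a $2$-dimensional space, contradicting $\dim B=3$. Hence $u^4=[u,u^3]=au+bu^2+cu^3$, and the Leibniz identity on $(u,u^3,u)$ kills $a$ exactly as above. Thus $B$ is determined by $(b,c)$, equivalently by the companion operator $T|_W=\left(\begin{smallmatrix}0&b\\ 1&c\end{smallmatrix}\right)$ with characteristic polynomial $\lambda^2-c\lambda-b$; conversely one checks every $(b,c)$ gives a genuine left Leibniz algebra, necessarily non-Lie since $u^2\ne 0$. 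The classification then becomes a normal-form problem for $T|_W$: because $[\,\cdot\,,-]$ factors through the $1$-dimensional quotient $B/W$, the conjugacy class of $T|_W$ up to an overall nonzero scalar (the rescaling $u\mapsto su$ multiplies $T$ by $s$) is an isomorphism invariant. As $T|_W$ is a companion matrix it is non-derogatory, so a repeated eigenvalue forces a single Jordan block and the scalar-matrix case is excluded. The unordered pair of eigenvalues up to scaling therefore has exactly four types — $\{0,0\}$, $\{0,1\}$, distinct nonzero $\{1,\alpha\}$, and a repeated nonzero value realized as a Jordan block at $1$ — and exhibiting a generator in each of (a)--(d) matches them (for instance $u=z$ gives $z^4=z^3$ in (d), and $u=z+x+y$ gives $u^4=2u^3-u^2$ in (c)), with distinctness following from the eigenvalue invariant and $\alpha\sim\alpha^{-1}$ within (b).

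I expect the main obstacle to be the bookkeeping in Part (2): proving the tower is a basis, isolating the precise Leibniz relations that pin down the structure constants, and—most delicately—verifying that the eigenvalue data of $T|_W$ is a genuine isomorphism invariant rather than an artifact of the chosen generator, so that the four normal forms are simultaneously exhaustive and pairwise non-isomorphic.
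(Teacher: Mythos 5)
The paper does not prove this proposition: it is quoted from \cite{BuHLSS, DMS} and used as imported input (its only internal echo is Corollary \ref{cyclicclass3}, which records the data $b^4=bb^2+cb^3$ for types (b)--(d)). So there is no in-paper argument to compare yours against, and your proposal has to be judged on its own. Its skeleton is sound. The key structural facts you use are exactly the ones the paper also records ($[u^n,y]=0$ for $n\ge 2$, so $W=\mathrm{Span}\{u^k:k\ge 2\}$ acts trivially on the left and everything is governed by $T=[u,\cdot\,]$), and the computations I can verify are correct: the Leibniz identity on $(b,b^2,b)$ does kill the $b$-coefficient of $[b,b^2]$; rescaling $u\mapsto su$ sends $(b,c)\mapsto(s^2b,sc)$, i.e.\ scales the eigenvalues of the companion matrix $\left(\begin{smallmatrix}0&b\\1&c\end{smallmatrix}\right)$ by $s$; your representative $u=z+x+y$ in type (c) really does satisfy $u^2=x+2y$, $u^3=x+3y$, $u^4=x+4y=2u^3-u^2$, giving the repeated nonzero eigenvalue, and $u=z$ in type (d) gives $u^4=u^3$, i.e.\ eigenvalues $\{0,1\}$; and the eigenvalues $-i\sqrt{\alpha},\,-i/\sqrt{\alpha}$ of the type (b) form in Corollary \ref{cyclicclass3} recover the ratio invariant $\{\alpha,\alpha^{-1}\}$. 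The four scaling classes you list are indeed exhaustive once the scalar-matrix case is excluded, which it is because companion matrices are non-derogatory.

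Three points still need to be written out, and you correctly flag the third as the crux. First, in Part (1) you should justify the existence of $b$ with $[b,b]\ne 0$: if all squares vanish the bracket is antisymmetric and the left Leibniz identity becomes the Jacobi identity, contradicting non-Lie. Second, before arguing that $u^3\notin\mathrm{Span}\{u,u^2\}$ you need $\{u,u^2\}$ independent, which follows from $[u^2,u]=0$ as in Part (1). Third, and most substantively, you assert that the scaled conjugacy class of $T|_W$ is a \emph{complete} invariant; to make the list exhaustive you must show that any two cyclic algebras realizing the same scaled eigenvalue data are isomorphic. This is true but needs the argument: $W$ is the span of all squares, hence characteristic; any generator $x=su+w$ ($s\ne 0$, $w\in W$) has $[x,\cdot\,]|_W=sT|_W$ with $x^2$ a cyclic vector for it, so the structure constants of $\langle x\rangle$ are the companion data of $sT|_W$, and conversely $u^k\mapsto v^k$ is an isomorphism whenever the $(b,c)$'s agree. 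One also needs the (easy) converse existence check that every $(b,c)$ satisfies the left Leibniz identity, which follows since all brackets land in $W$ and $W$ acts trivially. With these insertions the proposal is a complete and correct proof.
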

%%%%%%%%%%%%%%%%%%%%%%%%%%%%%%%%%%%%%%%%%%%%%%%%%%%%%%%%%
\begin{cor}\label{cyclicclass3}\cite{BuHLSS} 

%\begin{enumerate}
%\item 
Let $B$ be a non-Lie cyclic Leibniz algebra $B$ such that $\dim B=3$. 
\begin{enumerate}
\item If $B$ is of type $(b)$ in Proposition \ref{cyclicclass}, then $B$ has a basis $\{b,b^2,b^3\}$ such that $b^4=b^2-\frac{\alpha+1}{\sqrt{\alpha}}ib^3$ ($\alpha\neq 1$).
\item If $B$ is of type $(c)$ in Proposition \ref{cyclicclass}, then $B$ has a basis $\{b,b^2,b^3\}$ such that $b^4=b^2+2ib^3$.
\item If $B$ is of type $(d)$ in Proposition \ref{cyclicclass}, then $B$ has a basis $\{b,b^2,b^3\}$ such that $b^4=b^3$.
\end{enumerate}

\end{cor}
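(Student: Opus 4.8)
The plan is to handle the three types of Proposition \ref{cyclicclass} one at a time: in each case I would exhibit an explicit cyclic generator $b$, compute the successive powers $b^2=[b,b]$, $b^3=[b,b^2]$, $b^4=[b,b^3]$ using only the listed nonzero brackets, and then rescale $b$ to put the relation into the stated normal form. Before the case analysis I would record one general fact that dictates the shape of every answer. If $B=\langle b\rangle$ is a three-dimensional cyclic left Leibniz algebra with basis $\{b,b^2,b^3\}$, then $b^4=\lambda_1 b+\lambda_2 b^2+\lambda_3 b^3$ for some scalars; expanding $[b^4,b]$ by bilinearity and using the identity $[b^n,y]=0$ for $n\geq 2$ (which kills $[b^2,b]$, $[b^3,b]$, and $[b^4,b]$ itself) gives $0=\lambda_1[b,b]=\lambda_1 b^2$, and since $b^2\neq 0$ we get $\lambda_1=0$. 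Thus in every case the relation is forced to have the form $b^4=\lambda_2 b^2+\lambda_3 b^3$, and the whole problem reduces to determining $(\lambda_2,\lambda_3)$ and normalizing.

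For the explicit generators I would argue as follows. In type $(d)$ the element $z$ already generates, since $z^2=[z,z]=x$, $z^3=[z,x]=y$, and $z^4=[z,y]=y=z^3$; taking $b=z$ gives the basis $\{z,x,y\}$ and immediately the relation $b^4=b^3$, with no rescaling required. In types $(b)$ and $(c)$ the given $z$ does not generate, because $[z,z]=0$ forces $\langle z\rangle$ to be one-dimensional, so I would instead take $b=z+x+y$ in type $(b)$ and $b=z+x$ in type $(c)$. A direct bilinear computation then gives, for type $(b)$, $b^2=\alpha x+y$, $b^3=\alpha^2 x+y$, $b^4=\alpha^3 x+y$ — these are independent since $b^2-b^3=\alpha(1-\alpha)x$ with $\alpha\neq 0,1$ — and solving $b^4=\lambda_2 b^2+\lambda_3 b^3$ yields $(\lambda_2,\lambda_3)=(-\alpha,\alpha+1)$; for type $(c)$ one finds $b^2=x+y$, $b^3=x+2y$, $b^4=x+3y$, hence $(\lambda_2,\lambda_3)=(-1,2)$.

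Finally I would normalize the coefficient of $b^2$ to $1$. Under $b\mapsto \mu b$ one has $(\mu b)^n=\mu^n b^n$, so $(\lambda_2,\lambda_3)$ transforms to $(\mu^2\lambda_2,\mu\lambda_3)$. For type $(b)$ the choice $\mu=-i/\sqrt{\alpha}$ sends $(-\alpha,\alpha+1)$ to $\left(1,-\frac{\alpha+1}{\sqrt{\alpha}}i\right)$, and for type $(c)$ the choice $\mu=i$ sends $(-1,2)$ to $(1,2i)$, matching the stated relations. The two points I would be most careful about are (i) choosing a generator whose powers actually span a three-dimensional space in types $(b)$ and $(c)$, which is exactly why the $x$- and $y$-components must be adjoined to $z$, and (ii) fixing the branch of $\sqrt{\alpha}$ and the sign of $\mu$ consistently so that the normalization simultaneously produces coefficient $1$ on $b^2$ and the prescribed value on $b^3$; everything else is a routine computation with the bracket.
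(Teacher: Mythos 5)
Your proposal is correct: all the bracket computations, the linear-independence checks, and the rescaling rule $(\lambda_2,\lambda_3)\mapsto(\mu^2\lambda_2,\mu\lambda_3)$ check out, and the three normalizations do land on the stated relations. The paper itself gives no proof of this corollary -- it is quoted from \cite{BuHLSS} -- so there is no argument to compare against; your write-up is a valid self-contained verification of the cited result, and the preliminary observation that $[b^4,b]=0$ forces the $b$-coefficient of $b^4$ to vanish, while not strictly needed once $b^4$ is computed explicitly in each case, is a correct and clean way to explain why the relation must take the form $b^4=\lambda_2b^2+\lambda_3b^3$.
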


%%%%%%%%%%%%%%%%%%%%%%%%%%%%%%%%%%%%%%%%%%%%%%%%%%%%%%%

\begin{dfn}\cite{GMS} A {\em 1-truncated conformal algebra} is a graded vector space $C=C_0\oplus C_1$ equipped with a linear map $\partial:C_0\rightarrow C_1$ and bilinear operations $(u,v)\mapsto u_iv$ for $i=0,1$ of degree $-i-1$ on $C=C_0\oplus C_1$ such that the following axioms hold:

\medskip

\noindent(Derivation) for $a\in C_0$, $u\in C_1$,
\begin{equation}
(\partial a)_0=0,\ \ (\partial a)_1=-a_0,\ \ \partial(u_0a)=u_0\partial a;
\end{equation}

\noindent(Commutativity) for $a\in C_0$, $u,v\in C_1$,
\begin{equation} 
u_0a=-a_0u,\ \ u_0v=-v_0u+\partial(u_1v),\ \ u_1v=v_1u;
\end{equation}

\noindent(Associativity) for $\alpha,\beta,\gamma\in C_0\oplus C_1$,
\begin{equation}
\alpha_0\beta_i\gamma=\beta_i\alpha_0\gamma+(\alpha_0\beta)_i\gamma.
\end{equation}
\end{dfn}

\begin{dfn}\cite{Br1, Br2, GMS} Let $(A,*)$ be a unital commutative associative algebra over $\mathbb{C}$ with the identity $1_A$. A {\em vertex $A$-algebroid} is a $\mathbb{C}$-vector space $\Gamma$ equipped with 
\begin{enumerate}
\item a $\mathbb{C}$-bilinear map $A\times \Gamma\rightarrow \Gamma, \ \ (a,v)\mapsto a\cdot v$ such that $1\cdot v=v$ (i.e. a nonassociative unital $A$-module),
\item a structure of a Leibniz $\mathbb{C}$-algebra $[~,~]:\Gamma\times \Gamma\rightarrow\Gamma$, 
\item a homomorphism of left Leibniz $\mathbb{C}$-algebra $\pi:\Gamma\rightarrow Der(A)$,
\item a symmetric $\mathbb{C}$-bilinear pairing $\langle ~,~\rangle:\Gamma\otimes_{\mathbb{C}}\Gamma\rightarrow A$,
\item a $\mathbb{C}$-linear map $\partial :A\rightarrow \Gamma$ such that $\pi\circ \partial =0$ which satisfying the following conditions:
\begin{eqnarray*}
&&a\cdot (a'\cdot v)-(a*a')\cdot v=\pi(v)(a)\cdot \partial(a')+\pi(v)(a')\cdot \partial(a),\\
&&[u,a\cdot v]=\pi(u)(a)\cdot v+a\cdot [u,v],\\
&&[u,v]+[v,u]=\partial(\langle u,v\rangle),\\
&&\pi(a\cdot v)=a\pi(v),\\
&&\langle a\cdot u,v\rangle=a*\langle u,v\rangle-\pi(u)(\pi(v)(a)),\\
&&\pi(v)(\langle v_1,v_2\rangle)=\langle [v,v_1],v_2\rangle+\langle v_1,[v,v_2]\rangle,\\
&&\partial(a*a')=a\cdot \partial(a')+a'\cdot\partial(a),\\
&&[v,\partial(a)]=\partial(\pi(v)(a)),\\
&&\langle v,\partial(a)\rangle=\pi(v)(a)
\end{eqnarray*}
for $a,a'\in A$, $u,v,v_1,v_2\in\Gamma$.
\end{enumerate}
\end{dfn}

%%%%%%%%%%%%%%%%%%%%%%%%%%%%%%%%%%%%%%%%%%%%%%%%%% 

\begin{prop}\cite{LiY} Let $(A,*)$ be a unital commutative associative algebra and let $B$ be a module for $A$ as a nonassociative algebra . Then a vertex $A$-algebroid structure on $B$ exactly amounts to a 1-truncated conformal algebra structure on $C=A\oplus B$ with 
\begin{eqnarray*}
&&a_ia'=0,\\
&&u_0v=[u,v],~u_1v=\langle u,v\rangle,\\
&&u_0a=\pi(u)(a),~ a_0u=-u_0a
\end{eqnarray*} for $a,a'\in A$, $u,v\in B$, $i=0,1$ such that 
\begin{eqnarray*}
&&a\cdot(a'\cdot u)-(a*a')\cdot u=(u_0a)\cdot \partial a'+(u_0a')\cdot \partial a,\\
&&u_0(a\cdot v)-a\cdot (u_0v)=(u_0a)\cdot v,\\
&&u_0(a*a')=a*(u_0a')+(u_0a)*a',\\
&&a_0(a'\cdot v)=a'*(a_0v),\\
&&(a\cdot u)_1v=a*(u_1v)-u_0v_0a,\\
&&\partial(a*a')=a\cdot \partial(a')+a'\cdot \partial(a).
\end{eqnarray*}
\end{prop}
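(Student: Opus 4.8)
The plan is to exhibit a bijection between the two sets of data and to verify that, under the dictionary recorded in the statement, each axiom of one structure matches an axiom (or a consequence of the axioms) of the other. Since the underlying vector spaces $A$, $B$, the nonassociative $A$-module map $a\cdot v$, and the linear map $\partial$ are common to both sides, there is nothing to construct: passing from a vertex $A$-algebroid to a $1$-truncated conformal algebra simply sets $u_0v=[u,v]$, $u_1v=\langle u,v\rangle$, $u_0a=\pi(u)(a)$, $a_0u=-u_0a$ and $a_ia'=0$, while the reverse passage reads $[\,,\,]$, $\langle\,,\,\rangle$ and $\pi$ off the same formulas. As these are visibly inverse to one another, the entire content is the equivalence of the axioms, which I would organize by grading.

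First I would record the degree bookkeeping: with $C_0=A$ in degree $0$ and $C_1=B$ in degree $1$, the operation $\cdot_i$ has degree $-i-1$, so $u_0v$ and $[u,v]$ land in $C_1$, while $u_1v$, $\langle u,v\rangle$, $u_0a$, $a_0u$ and $\pi(u)(a)$ land in $C_0$; every product whose target degree is negative (such as $a_0a'$, $a_1a'$, or $(\partial a)_1a'$) is forced to vanish, which is precisely why the proposition imposes $a_ia'=0$. This split separates the $1$-truncated conformal algebra axioms into those that are purely internal to the operations $\cdot_i$ and $\partial$, and those — exactly the six displayed relations — that couple the $\cdot_i$ to the external data $*$ and $a\cdot v$ that the bare $1$-truncated conformal algebra cannot see.

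Next I would match the internal axioms. The Commutativity axioms return the symmetry $\langle u,v\rangle=\langle v,u\rangle$ (from $u_1v=v_1u$), the skew-symmetry relation $[u,v]+[v,u]=\partial\langle u,v\rangle$ (from $u_0v=-v_0u+\partial(u_1v)$), and the tautology $a_0u=-u_0a$. The Derivation axioms yield $\pi\circ\partial=0$ together with $[\partial a,v]=0$ (from $(\partial a)_0=0$), the pairing relation $\langle v,\partial a\rangle=\pi(v)(a)$ (from $(\partial a)_1=-a_0$), and $[v,\partial a]=\partial(\pi(v)(a))$ (from $\partial(u_0a)=u_0\partial a$). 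The Associativity axiom $\alpha_0\beta_i\gamma=\beta_i\alpha_0\gamma+(\alpha_0\beta)_i\gamma$ is where most of the labour sits, and I would run through the cases according to which of $\alpha,\beta,\gamma$ lie in $A$ versus $B$ and which index $i$ is chosen. The case $\alpha,\beta,\gamma\in B$ with $i=0$ returns the Leibniz identity for $[\,,\,]$; the case $\gamma=a\in A$ with $i=0$ returns the Leibniz homomorphism property $\pi([u,v])=[\pi(u),\pi(v)]$; and the case $\alpha=v,\beta=v_1,\gamma=v_2$ with $i=1$ returns the invariance relation $\pi(v)(\langle v_1,v_2\rangle)=\langle[v,v_1],v_2\rangle+\langle v_1,[v,v_2]\rangle$. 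The remaining instances either vanish by the degree count or, after using the already-established relation $a_0u=-u_0a$, reduce to ones already treated.

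Finally, the six displayed relations transcribe exactly the five vertex $A$-algebroid axioms that involve $*$ or $a\cdot v$ — namely $a\cdot(a'\cdot v)-(a*a')\cdot v=\pi(v)(a)\cdot\partial a'+\pi(v)(a')\cdot\partial a$, the rule $[u,a\cdot v]=\pi(u)(a)\cdot v+a\cdot[u,v]$, the identity $\pi(a\cdot v)=a\pi(v)$, the pairing formula $\langle a\cdot u,v\rangle=a*\langle u,v\rangle-\pi(u)(\pi(v)(a))$, and $\partial(a*a')=a\cdot\partial a'+a'\cdot\partial a$ — once $u_0a$ is rewritten as $\pi(u)(a)$ and $a_0v$ as $-\pi(v)(a)$; the one remaining displayed relation, $\pi(u)(a*a')=a*\pi(u)(a')+\pi(u)(a)*a'$, simply records that each $\pi(u)$ is a derivation of $(A,*)$. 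Because every translation above is reversible term by term, reading these same equalities backwards recovers the full list of vertex $A$-algebroid axioms from the $1$-truncated conformal algebra axioms together with the six relations, which establishes the claimed equivalence. I expect the only genuine obstacle to be the disciplined case analysis of the Associativity axiom, together with the care needed to apply $a_0u=-u_0a$ consistently when reducing the mixed-degree instances; everything else is bookkeeping.
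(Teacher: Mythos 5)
Your verification is correct and is the standard direct translation argument: the paper itself states this proposition without proof, citing Li--Yamskulna \cite{LiY}, and your degree bookkeeping, the case analysis of the Associativity axiom, and the matching of the six displayed relations with the algebroid axioms involving $*$ and $a\cdot v$ (plus the derivation property of $\pi(u)$) reproduce exactly what that reference does. No gaps; the one point worth making explicit in a write-up is that extracting $\langle v,\partial a\rangle=\pi(v)(a)$ from $(\partial a)_1=-a_0$ also uses the commutativity axiom $u_1v=v_1u$, which you do invoke separately.
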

%%%%%%%%%%%%%%%%%%%%%%%%%%%%%%%%%%%%%%%%%
\begin{dfn} Let $I$ be a subspace of a vertex $A$-algebroid $B$. The vector space $I$ is called an {\em ideal } of the vertex $A$-algebroid $B$ if $I$ is a left ideal of the left Leibniz algebra $B$ and $a\cdot u\in I$ for all $a\in A$, $u\in I$
\end{dfn}

%%%%%%%%%%%%%%%%%%%%%%%%%%%%%%%%%%%%%%%%%%%%%%%

Let $(A,*)$ be a unital commutative associative algebra. Let $B$ be a vertex $A$-algebroid. We set $A\partial(A):=Span\{a\cdot\partial(a')~|~a,a'\in A\}$. The vector space $A\partial(A)$ is an ideal of the vertex $A$-algebroid $B$. Moreover, $A\partial(A)$ is an abelian Lie algebra.

Observe that for $a,a',a''\in A$, $u\in B$, we have
\begin{eqnarray*}
(a\cdot \partial(a'))_0a''&&=0,\text{ and }\\
(a\cdot \partial(a'))_0u&&=a\cdot (u_0\partial(a'))+(u_0a)\cdot\partial(a')+\partial(u_1( a\cdot \partial(a'))\in A\partial(A).
\end{eqnarray*}

%%%%%%%%%%%%%%%%%%%%%%%%%%%%%%%%%%%%%%%%%%%%%%%%%%%%
\begin{dfn} Let $A$ be a commutative associative algebra. A Lie $A$-algebroid is a Lie algebra $\mathfrak{g}$ equipped with an $A$-module structure and a module action on $A$ by derivation such that 
\begin{eqnarray*}
[u,av]&=&a[u,v]+(ua)v,\\
a(ua')&=&(au)a'
\end{eqnarray*}
for $u,v\in\mathfrak{g}$, $a,a'\in A$.

A module for a Lie $A$-algebroid $\mathfrak{g}$ is a vector space $W$ equipped with a $\mathfrak{g}$-module structure and an $A$-module structure such that 
\begin{eqnarray*}
&&u(aw)-a(uw)=(ua)w,\\
&&a(uw)=(au)w
\end{eqnarray*}
for all $a\in A$, $u\in\mathfrak{g}$, $w\in W$.
\end{dfn}
\begin{prop}\cite{Br2}  Let $A$ be a unital commutative associative algebra over $\mathbb{C}$ and let $B$ be a vertex $A$-algebroid . Then $B/A \partial(A)$ is a Lie $A$-algebroid. 
\end{prop}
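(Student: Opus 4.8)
The plan is to show that each defining relation of the vertex $A$-algebroid $B$ collapses, modulo the ideal $A\partial(A)$, to the corresponding axiom of a Lie $A$-algebroid; the guiding observation is that every \emph{defect term} measuring the failure of antisymmetry of the bracket or of associativity of the $A$-action lies in $A\partial(A)$.

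First I would confirm that the quotient $\bar B := B/A\partial(A)$ carries well-defined operations. By the hypothesis stated above, $A\partial(A)$ is an ideal of the vertex $A$-algebroid, so it is a left ideal of the Leibniz algebra $B$ and is stable under the $A$-action $a\cdot(-)$. To see that it is also a right ideal, I would use the relation $[u,v]+[v,u]=\partial(\langle u,v\rangle)$: for $w\in A\partial(A)$ and $v\in B$ one has $[w,v]=-[v,w]+\partial(\langle w,v\rangle)$, and both $[v,w]$ (left ideal) and $\partial(\langle w,v\rangle)=1\cdot\partial(\langle w,v\rangle)$ lie in $A\partial(A)$. Hence the bracket and the $A$-action both descend to $\bar B$ independently of coset representatives.

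Next I would identify the Lie structure. Since $\partial(\langle u,v\rangle)\in A\partial(A)$ for all $u,v\in B$, the relation $[u,v]+[v,u]=\partial(\langle u,v\rangle)$ shows that the induced bracket on $\bar B$ is antisymmetric; combined with the left Leibniz identity inherited from $B$, antisymmetry upgrades the Leibniz identity to the Jacobi identity, so $\bar B$ is a Lie algebra. For the module structure, the relation
$$a\cdot(a'\cdot v)-(a*a')\cdot v=\pi(v)(a)\cdot\partial(a')+\pi(v)(a')\cdot\partial(a)$$
has right-hand side in $A\partial(A)$, so $\bar B$ is an honest (associative, unital) $A$-module.

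Finally I would produce the derivation action and check the two compatibility axioms. Because $\pi\circ\partial=0$ and $\pi(a\cdot v)=a\pi(v)$, we get $\pi(a\cdot\partial(a'))=a\,\pi(\partial(a'))=0$, so $\pi$ kills $A\partial(A)$ and descends to $\bar\pi:\bar B\to\Der(A)$, giving the action of $\bar B$ on $A$ by derivations. Writing $ua:=\bar\pi(u)(a)$, the relation $[u,a\cdot v]=\pi(u)(a)\cdot v+a\cdot[u,v]$ becomes exactly $[u,av]=a[u,v]+(ua)v$ in $\bar B$, while $\pi(a\cdot v)=a\pi(v)$ gives $a(ua')=a*\pi(u)(a')=\pi(a\cdot u)(a')=(au)a'$, which is the second axiom. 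I expect the only real care to be bookkeeping: verifying that every structure map is genuinely well defined on the quotient, which reduces precisely to the stability of $A\partial(A)$ established in the first step. There is no deep obstacle here; the entire content is that the three error terms of the vertex $A$-algebroid axioms — the failure of skew-symmetry, the failure of associativity, and the image of $\partial$ — are all absorbed into $A\partial(A)$.
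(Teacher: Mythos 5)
Your argument is correct and complete. Note that the paper itself gives no proof of this proposition: it is quoted from Bressler's \emph{Vertex algebroids II} \cite{Br2}, and the only ingredients the paper supplies are the preliminary remarks that $A\partial(A)$ is an ideal of the vertex $A$-algebroid and that $(a\cdot\partial(a'))_0u\in A\partial(A)$ — precisely the well-definedness step you carry out first. Your verification (two-sidedness of $A\partial(A)$ via $[w,v]=-[v,w]+\partial(\langle w,v\rangle)$ and $\partial(A)=1_A\cdot\partial(A)\subseteq A\partial(A)$; antisymmetry of the induced bracket from the same relation, which turns the left Leibniz identity into Jacobi; associativity of the $A$-action because the defect term $\pi(v)(a)\cdot\partial(a')+\pi(v)(a')\cdot\partial(a)$ lies in $A\partial(A)$; descent of $\pi$ since $\pi\circ\partial=0$ and $\pi(a\cdot v)=a\pi(v)$; and the two compatibility axioms read off from $[u,a\cdot v]=\pi(u)(a)\cdot v+a\cdot[u,v]$ and $\pi(a\cdot v)=a\pi(v)$) is exactly the standard argument one would expect in the cited source, with no gaps.
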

\begin{prop}\cite{LiY} Let $\mathfrak{g}$ be a Lie $A$-algebroid. Then $\mathfrak{g}$ is a Lie algebra with $A$ a $\mathfrak{g}$-module. By adjoining the $\mathfrak{g}$-module $A$ to $\mathfrak{g}$ we have a Lie algebra $A\oplus\mathfrak{g}$ with $A$ as an abelian ideal. Denote by $J$ the 2-sided ideal of the universal enveloping algebra $U(A\oplus\mathfrak{g})$ generated by the vectors $$1_A-1,~a\cdot a'-aa',~ a\cdot b-ab$$ for $a,a'\in A$, $b\in\mathfrak{g}$ where $1_A$ is the identity of $A$, and $\cdot $ denotes the product in the universal enveloping algebra. Set $$\overline{U}(A\oplus \mathfrak{g})=U(A\oplus \mathfrak{g})/J.$$ Then a (simple) module structure for the Lie $A$-algebroid $\mathfrak{g}$ on a vector space $W$ exactly amounts to a (simple) $\overline{U}(A\oplus\mathfrak{g})$-module structure. 
\end{prop}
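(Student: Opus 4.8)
The plan is to verify that the two notions of module carry exactly the same data, and then observe that the identification is compatible with the lattice of submodules, so that simplicity is preserved. I would argue both implications by unwinding the definitions and matching each defining relation against a generator of the ideal $J$.

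First I would start from a module $W$ for the Lie $A$-algebroid $\mathfrak{g}$ and show that $W$ is automatically a module for the Lie algebra $A\oplus\mathfrak{g}$. In $A\oplus\mathfrak{g}$ the bracket is determined by $[A,A]=0$, the given bracket on $\mathfrak{g}$, and $[u,a]=ua$ (the derivation action) for $u\in\mathfrak{g}$, $a\in A$. Checking the Lie-module axiom $[x,y]w=x(yw)-y(xw)$ then splits into three cases: for $x,y\in\mathfrak{g}$ it is precisely the $\mathfrak{g}$-module axiom; for $x,y\in A$ both sides vanish, since the $A$-action is commutative and associative, so $a(a'w)=(aa')w=(a'a)w=a'(aw)$; and for $x=u\in\mathfrak{g}$, $y=a\in A$ it reads $(ua)w=u(aw)-a(uw)$, which is exactly the first compatibility condition in the definition of a Lie $A$-algebroid module. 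Hence $W$ is a $U(A\oplus\mathfrak{g})$-module.

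Next I would check that the ideal $J$ annihilates $W$, so that the action descends to $\overline{U}(A\oplus\mathfrak{g})$. Each generator corresponds to one of the remaining structural conditions: $1_A-1$ acts as zero because the $A$-module is unital; $a\cdot a'-aa'$ acts as zero because $a(a'w)=(aa')w$, i.e.\ the $A$-action is associative; and $a\cdot b-ab$ acts as zero precisely because of the second compatibility condition $a(uw)=(au)w$, after noting that in $U(A\oplus\mathfrak{g})$ the product $a\cdot b$ acts as $w\mapsto a(bw)$, while $ab$ denotes the element of $\mathfrak{g}$ obtained from the $A$-module structure on $\mathfrak{g}$. Thus $W$ becomes a $\overline{U}(A\oplus\mathfrak{g})$-module. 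Conversely, given a $\overline{U}(A\oplus\mathfrak{g})$-module, I would restrict the action to $\mathfrak{g}$ and to $A$: restriction to $\mathfrak{g}$ gives a $\mathfrak{g}$-module, restriction to $A$ together with the vanishing of $1_A-1$ and $a\cdot a'-aa'$ gives a unital associative $A$-module, the relation $a\cdot b-ab\in J$ recovers $a(uw)=(au)w$, and the Lie-algebra module property applied to the mixed bracket $[u,a]=ua$ recovers $u(aw)-a(uw)=(ua)w$. This exhibits the inverse construction, so the two module structures determine one another.

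Finally, since both constructions act by the identity on the underlying vector space and produce the same family of operators $a\mapsto(w\mapsto aw)$ and $u\mapsto(w\mapsto uw)$, a subspace of $W$ is stable under the Lie $A$-algebroid action if and only if it is a $\overline{U}(A\oplus\mathfrak{g})$-submodule. Consequently the correspondence restricts to a bijection between simple Lie $A$-algebroid modules and simple $\overline{U}(A\oplus\mathfrak{g})$-modules. I expect the only delicate point to be the bookkeeping in the third generator $a\cdot b-ab$: one must keep separate the product in the universal enveloping algebra (composition of operators), the $A$-module action on $\mathfrak{g}$, and the $A$-action on $W$, and confirm that these match the second compatibility axiom rather than the first.
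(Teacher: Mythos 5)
Your argument is correct: the case analysis of the Lie-module axiom on $A\oplus\mathfrak{g}$ (the $\mathfrak{g}$--$\mathfrak{g}$, $A$--$A$, and mixed brackets) together with the matching of the three generators of $J$ against unitality, associativity of the $A$-action, and the compatibility $a(uw)=(au)w$ is exactly the standard proof of this equivalence, and your closing remark that the two structures have the same invariant subspaces is what makes the correspondence preserve simplicity. The paper itself supplies no proof — the proposition is quoted from [LiY1] — so there is nothing to compare beyond noting that your unwinding, including the careful separation of the enveloping-algebra product $a\cdot b$ from the element $ab\in\mathfrak{g}$, is the intended argument.
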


%%%%%%%%%%%%%%%%%%%%%%%%%%%%%%%%%%%%%%%%%%%%%%%%%%%%%%%
\section{Classification of vertex algebroids associated with non-Lie cyclic left Leibniz algebras}

In this section, we classify all vertex $A$-algebroids $B$ when $B$ are cyclic non-Lie left Leibniz algebras such that $B\neq A\partial(A)$ and $\dim B$ is either 2 or 3. In subsection 3.1, we will find all vertex $A$-algebroids $B$ when $B$ are 2-dimensional non-Lie cyclic left Leibniz algebras such that $B\neq A\partial(A)$. In subsection 3.2, we investigate and classify all vertex $A$-algebroids $B$ when $B$ are 3-dimensional cyclic non-Lie left Leibniz algebras such that $B\neq A\partial(A)$. 

It is worth mentioning that the results in this section will play a crucial role in a study of representation theory of vertex algebras $V_B$ associated to vertex $A$-algebroids $B$ when $B$ is a non-Lie cyclic left Leibniz algebra such that $B\neq A\partial(A)$ and $2\leq \dim B\leq 3$, and an investigation on the relationships between vertex algebras $V_B$ and a rank one Heisenberg vertex operator algebra in Section 4.

Now, we let $A$ be a unital commutative associative algebra, and let $B$ be a vertex $A$-algebroid such that $B$ is a cyclic non-Lie left Leibniz algebra, $B\neq A\partial(A)$, and $\dim (B)=n$. Hence, $B$ is of the form $Span\{b,b_0b,....., (b_0)^{n-1}b\}$. We set $a=b_1b$. Since $b_0b=-b_0b+\partial(b_1b)$ and $B$ is a cyclic non-Lie left Leibniz algebra, we can conclude that $\partial(a)=2b_0b\neq 0$ and $a\neq 0$. Using the fact that $\{b, b_0b,.....,(b_0)^{n-1}b\}$ is a basis of $B$, and $(b_0)^i\partial(a)=\partial((b_0)^ia)$, we can conclude that $$\{b, \partial(a),...,\partial((b_0)^{n-2}a)\}$$ is a basis of $B$ as well. Since $\partial(A)\subseteq A\partial(A)\subseteq B$, and $1=\dim B/\partial(A)\geq\dim B/A\partial(A)\geq 1 $, we can conclude that $A\partial(A)=\partial(A)$, and 
\begin{equation*}\{\partial(a),...,\partial((b_0)^{n-2}a) \}\end{equation*} is a basis for $\partial(A)$.

\begin{prop}\cite{JY} Let $A$ be a finite dimensional unital commutative associative algebra with the identity $1_A$ and let $B$ be a finite dimensional non-Lie left Leibniz algebra. Assume that $B$ is a vertex $A$-algebroid. If $Ker(\partial)=\mathbb{C}1_A$ then $A$ is a local algebra.
\end{prop}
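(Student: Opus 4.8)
The plan is to use the standard fact that a finite dimensional commutative associative $\C$-algebra $A$ is local if and only if it contains no idempotent other than $0$ and $1_A$; equivalently, $A$ is local precisely when it does not decompose as a nontrivial product of algebras. I would therefore argue by contradiction: assume $A$ is not local and choose an idempotent $e\in A$ with $e*e=e$, $e\neq 0$, and $e\neq 1_A$. The goal is to show that any such $e$ must in fact lie in $\Ker(\partial)$, for then the hypothesis $\Ker(\partial)=\C 1_A$ forces $e=c\,1_A$ for some $c\in\C$, whence $e*e=e$ gives $c^2=c$ and so $e\in\{0,1_A\}$, contradicting the choice of $e$.

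The heart of the argument is the claim that $\partial(e)=0$ for every idempotent $e$. Write $w=\partial(e)$. First, applying the identity $\partial(a*a')=a\cdot\partial(a')+a'\cdot\partial(a)$ with $a=a'=e$ and using $e*e=e$ yields $w=2\,e\cdot w$, so that $e\cdot w=\tfrac12 w$. Next I would feed $v=w$ into the module associativity axiom
\[
a\cdot(a'\cdot v)-(a*a')\cdot v=\pi(v)(a)\cdot\partial(a')+\pi(v)(a')\cdot\partial(a),
\]
again with $a=a'=e$. Since $\pi\circ\partial=0$ we have $\pi(w)=\pi(\partial(e))=0$, so the entire right-hand side vanishes; together with $e*e=e$ the axiom then collapses to $e\cdot(e\cdot w)=e\cdot w$. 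Substituting $e\cdot w=\tfrac12 w$ on the left-hand side gives $e\cdot(\tfrac12 w)=\tfrac14 w$, while the right-hand side equals $\tfrac12 w$; hence $\tfrac14 w=\tfrac12 w$ and therefore $w=\partial(e)=0$, as claimed.

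Combining the two steps finishes the proof: every idempotent lies in $\Ker(\partial)=\C 1_A$ and is therefore trivial, so $A$ has no nontrivial idempotents and is local. The only place that requires genuine care is the use of the associativity axiom, because the $A$-module structure on $B$ is \emph{not} associative in general; the failure of associativity is measured precisely by the terms $\pi(v)(a)\cdot\partial(a')$ on the right-hand side. The conceptual point of the argument---and the step I expect to be the main obstacle to spot---is that specializing the test element $v$ to something in the image of $\partial$ annihilates exactly these obstruction terms via $\pi\circ\partial=0$, so that the module action behaves associatively on $w=\partial(e)$; once that is recognized, everything else is $\C$-bilinearity and the defining identities. (Note that the argument uses only the vertex $A$-algebroid axioms and the hypothesis on $\Ker(\partial)$, so neither the non-Lie nor the cyclic hypotheses on $B$ are actually needed here.)
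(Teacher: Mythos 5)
Your proof is correct. Note that the paper itself offers no argument for this proposition --- it is quoted from \cite{JY} without proof --- so there is nothing internal to compare against; but your argument is exactly the standard one for this statement: reduce locality of a finite-dimensional commutative algebra to the absence of nontrivial idempotents, show $\partial(e)=2\,e\cdot\partial(e)$ from the Leibniz rule for $\partial$, and then exploit $\pi\circ\partial=0$ to kill the obstruction terms in the non-associativity axiom so that $e\cdot(e\cdot\partial(e))=(e*e)\cdot\partial(e)$, forcing $\tfrac14\partial(e)=\tfrac12\partial(e)$ and hence $\partial(e)=0$. Your closing observations --- that the non-Lie and cyclic hypotheses on $B$ are not used, and that the whole point is that $v=\partial(e)$ makes the module action genuinely associative --- are both accurate.
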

\begin{prop} Let $A$ be a unital commutative associative algebra with the identity $1_A$. Let $B$ be a vertex $A$-algebroid such that $B$ is a cyclic non-Lie left Leibniz algebra, $B\neq A \partial(A)$, and $\dim (B)=n$. Then, there exists $b\in B$ such  that  $\{b,b_0b,....., (b_0)^{n-1}b\}$ is a basis for $B$. We set $a=b_1b$. Assume that $Ker(\partial)=\mathbb{C}1_A$. Then $A$ is a local algebra with a basis $\{1_A, a,b_0a,....,(b_0)^{n-2}a\}$.
\end{prop}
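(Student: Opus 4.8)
The plan is to reduce the entire statement to the single linear map $\partial\colon A\to B$ and to exploit the basis of $\partial(A)$ already produced in the discussion preceding the statement. First I would record that $A$ is finite dimensional: since $\Ker(\partial)=\C 1_A$, the induced map $A/\C 1_A\to B$ is injective and $B$ has dimension $n$, so $\dim A\leq n+1<\infty$. With $A$ finite dimensional and $\Ker(\partial)=\C 1_A$, the preceding proposition from \cite{JY} applies verbatim and yields that $A$ is local; this settles the ``local'' half of the claim, leaving only the basis assertion.

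For the basis, the key identity is $\partial((b_0)^i a)=(b_0)^i\partial(a)$, obtained by iterating the derivation axiom $\partial(b_0 a')=b_0\partial(a')$ of the underlying $1$-truncated conformal algebra. Together with the computation before the statement, this says that $\{\partial(a),\partial(b_0 a),\dots,\partial((b_0)^{n-2}a)\}$ is a basis of $\Image(\partial)=\partial(A)$, so $\dim\Image(\partial)=n-1$. By rank--nullity, $\dim A=\dim\Ker(\partial)+\dim\Image(\partial)=1+(n-1)=n$, which already matches the cardinality of the proposed spanning set $\{1_A,a,b_0 a,\dots,(b_0)^{n-2}a\}$.

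It then suffices to check that these $n$ vectors are linearly independent. I would take a vanishing combination $c_0 1_A+\sum_{i=0}^{n-2}c_{i+1}(b_0)^i a=0$ and apply $\partial$. Since $\partial(1_A)=0$ (which itself follows from $\partial(a*a')=a\cdot\partial(a')+a'\cdot\partial(a)$ specialized to $a=a'=1_A$) and since $\partial$ carries $(b_0)^i a$ to $(b_0)^i\partial(a)$, the relation collapses to $\sum_{i=0}^{n-2}c_{i+1}\partial((b_0)^i a)=0$; independence of the exhibited basis of $\partial(A)$ forces $c_1=\cdots=c_{n-1}=0$, whereupon $c_0 1_A=0$ gives $c_0=0$. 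Equivalently, $\partial$ induces an isomorphism $A/\C 1_A\xrightarrow{\sim}\partial(A)$ sending the cosets of $a,b_0 a,\dots,(b_0)^{n-2}a$ to a basis, so adjoining $1_A$ produces a basis of $A$.

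The bulk of the work is really carried by the material preceding the statement; the only genuine new input is the passage from a basis of $\partial(A)$ to a basis of $A$, and the single point needing care is that $\partial$ annihilates exactly $\C 1_A$ and nothing more in the span of the proposed basis. This is precisely where the hypothesis $\Ker(\partial)=\C 1_A$ enters, and it is the hinge of the argument; beyond it I anticipate only bookkeeping, since no vertex-algebroid relation other than the derivation axiom and $\partial(1_A)=0$ is used.
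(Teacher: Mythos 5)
Your proof is correct and follows essentially the same route as the paper: linear independence is obtained exactly as in the paper's argument, by applying $\partial$ to a vanishing combination and using the known basis $\{\partial(a),\dots,\partial((b_0)^{n-2}a)\}$ of $\partial(A)$ together with $\Ker(\partial)=\mathbb{C}1_A$. The only (minor) difference is that you close the spanning half by a rank--nullity dimension count, whereas the paper verifies spanning directly by subtracting from an arbitrary $a'\in A$ a suitable combination landing in $\Ker(\partial)$; both rest on the same two facts, so this is a cosmetic variation rather than a different argument.
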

\begin{proof} First, we will show that $\{1_A,a,b_0a,....,(b_0)^{n-2}a\}$ is linearly independent. We set 
\begin{equation}\label{Aindependent}\alpha 1_A+\alpha_0 a+\alpha_1 b_0a+.....+\alpha_{n-2}(b_0)^{n-2}a=0.\end{equation} Applying $\partial$ to (\ref{Aindependent}), we have $$\alpha_0\partial(a)+\alpha_1\partial(b_0a)+....+\alpha_{n-2}\partial((b_0)^{n-2}a)=0.$$ Since $\{\partial(a),\partial(b_0a),....,\partial((b_0)^{n-2}a)\}$ is linearly independent, we can conclude that 
$\alpha_0=\alpha_1=....=\alpha_{n-2}=0$. In addition, we have $\alpha 1_A=0$. This implies that $\alpha=0$, and $\{1_A,a,b_0a,....,(b_0)^{n-2}a\}$ is linearly independent. 

Next, we show that $A=Span\{1_A,a,b_0a,....,(b_0)^{n-2}a\}$. Let $a'\in A$. Then $\partial(a')\in \partial(A)$. Hence, there exist $\beta_0,....,\beta_{n-2}\in\mathbb{C}$ such that $$\partial(a')=\beta_0\partial(a)+\beta_1\partial(b_0a)+....+\beta_{n-2}\partial((b_0)^{n-2}a).$$ Consequently, we have $a'-(\beta_0a+\beta_1 b_0a+....+\beta_{n-2}(b_0)^{n-2}a) \in Ker(\partial).$ Since $Ker(\partial)=\mathbb{C}1_A$, this implies that there exists $\chi\in \mathbb{C}$ such that $$a'=\beta_0a+\beta_1 b_0a+....+\beta_{n-2}(b_0)^{n-2}a+\chi 1_A.$$ Therefore, $A=Span\{1_A,a,b_0a,....,(b_0)^{n-2}a\}$. 

\end{proof}

\subsection{Classification of Vertex Algebroids associated to $2$-dimensional non-Lie left Leibniz algebras}

\ \ 

Let $A$ be a unital commutative associative algebra with the identity $1_A$. Let $B$ be a vertex $A$-algebroid such that $B$ is a non-Lie left Leibniz algebra, $\dim(B)=2$, and $B\neq A\partial(A)$. Hence, there exists $b\in B$ such that $\{b,b_0b\}$ is a basis of $B$. 

We set $a=b_1b$.  Assume that $Ker(\partial)=\mathbb{C}1_A$. Then $A$ is a local algebra with a basis $\{1_A,a\}$, and $\{b,\partial(a)\}$ is a basis for $B$. For convenience, we also set $$a*a=\alpha_1 1_A+\alpha_2 a.$$ Here, $\alpha_1,\alpha_2\in\mathbb{C}$. Clearly, we have $$a\cdot \partial(a)=\frac{1}{2}\alpha_2\partial(a).$$ Since $B$ is a 2-dimensional left Leibniz algebra, by Proposition \ref{cyclicclass}, we know that either $b_0(b_0b)=0$ or $b_0(b_0b)=b_0b$. In Theorem \ref{Bcyclicnilpotent} and Theorem \ref{Bcyclicsolvable}, we study the algebraic structure of the vertex $A$-algebroid $B$ for the case when $b_0(b_0b)=0$ and for the case when $b_0(b_0b)=b_0b$, respectively.  %In Theorem \ref{Bcyclicsolvable}, we will study the algebraic structure of the vertex $A$-algebroid $B$ when $b_0(b_0b)=b_0b$. 

\begin{thm}\label{Bcyclicnilpotent} Let $A$ be a unital commutative associative algebra with the identity $1_A$. Let $B$ be a vertex $A$-algebroid such that $B$ is a cyclic non-Lie left Leibniz algebra, $\dim B=2$, $B\neq A \partial(A)$, and $Ker(\partial)=\mathbb{C}1_A$.  Clearly, there exists $b\in B$ such that $\{ b,b_0b\}$ is a basis for $B$. When we set $a=b_1b$, the set $\{b,\partial(a)\}$ is a basis of $B$ and the set $\{1_A,a\}$ is a basis for $A$. 

Assume that $b_0(b_0b)=0$. Then

\noindent(i) $b_0a=0$,
   
\noindent(ii) $a\cdot b\in\partial(A)$,
   
\noindent(iii) $a*a=0$, $A\cong \mathbb{C}[x]/(x^2)$, and

\noindent(iv) $a\cdot\partial(a)=0$. 
    
\noindent (v) In addition, $B$ is a module of $A$ as a commutative associative algebra.

\noindent (vi) The ideal $(a)$ is the unique maximal ideal of $A$. Moreover, for $u\in (a)$, $w\in B/A\partial(A)$, $w_0u=0$ and $u\cdot w=0$.
\end{thm}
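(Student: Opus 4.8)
The plan is to work entirely within the axioms of a vertex $A$-algebroid (equivalently, the 1-truncated conformal algebra relations listed in the excerpt), using the hypotheses $b_0(b_0b)=0$, $\Ker(\partial)=\C 1_A$, and the basis structure $\{b,\partial(a)\}$ for $B$ and $\{1_A,a\}$ for $A$ with $a=b_1b$. The key observation that drives everything is that $\partial(a)=2b_0b$, so the assumption $b_0(b_0b)=0$ immediately gives $b_0\partial(a)=0$, which I expect to be the engine behind parts (i)--(iv).

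First I would establish (i): since $b_0a$ lies in $A$, apply $\partial$ to get $\partial(b_0a)=b_0\partial(a)=2b_0(b_0b)=0$, so $b_0a\in\Ker(\partial)=\C 1_A$; then a degree/derivation argument (e.g. using $\pi(b)(a)=b_0a$ and the fact that $\pi(b)$ is a derivation annihilating $1_A$, together with $b_0a\in\C1_A$) should force $b_0a=0$. For (ii), I would compute $a\cdot b$ and apply $\pi$: using $\pi(a\cdot b)=a\,\pi(b)$ and the commutativity relation $u_0a=-a_0u$, I expect to show $\pi(a\cdot b)=0$, which by the proposition identifying $\Ker\pi$-type data with $A\partial(A)$ (here $A\partial(A)=\partial(A)$) places $a\cdot b\in\partial(A)$. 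For (iii), I would apply $\partial$ to $a*a=\alpha_1 1_A+\alpha_2 a$ and use the Leibniz-type rule $\partial(a*a)=2\,a\cdot\partial(a)=\alpha_2\partial(a)$ alongside the relation $a\cdot\partial(a)=\tfrac12\alpha_2\partial(a)$ already recorded; combining these with (i) (which controls how $b_0$ acts) and $b_0\partial(a)=0$ should pin down $\alpha_1=\alpha_2=0$, giving $a*a=0$ and hence $A\cong\C[x]/(x^2)$. Then (iv) is immediate from $a\cdot\partial(a)=\tfrac12\alpha_2\partial(a)=0$.

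For (v), once $a*a=0$ and $a\cdot\partial(a)=0$ and $a\cdot b\in\partial(A)$ are known, I would verify the associativity defect $a\cdot(a'\cdot u)-(a*a')\cdot u=(u_0a)\cdot\partial(a')+(u_0a')\cdot\partial(a)$ vanishes on basis elements: the right-hand side involves $b_0a=0$ (from (i)) and $\partial$ applied to scalars, so it collapses, showing the nonassociative $A$-action is genuinely associative and $B$ is an honest $A$-module. For (vi), locality of $A$ with basis $\{1_A,a\}$ and $a*a=0$ makes $(a)=\C a$ the unique maximal ideal directly. The final claims $w_0u=0$ and $u\cdot w=0$ for $u\in(a)$, $w\in B/A\partial(A)$ reduce, by linearity, to the single element $u=a$; there $a\cdot w$ lands in $A\partial(A)=\partial(A)$ (by (ii), since $B/A\partial(A)$ is spanned by the image of $b$), hence is zero in the quotient, and $w_0a=-a_0w$ together with (i) gives $w_0a=0$.

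The main obstacle I anticipate is part (iii): extracting $\alpha_1=\alpha_2=0$ cleanly. The relation $a\cdot\partial(a)=\tfrac12\alpha_2\partial(a)$ handles $\alpha_2$ once I show $a\cdot\partial(a)=0$ independently, but showing that requires care, and separating the $\alpha_1 1_A$ contribution demands exploiting $\Ker(\partial)=\C1_A$ in tandem with the pairing identity $\langle a\cdot u,v\rangle = a*\langle u,v\rangle-\pi(u)(\pi(v)(a))$ evaluated at $u=v=b$, where $\langle b,b\rangle=a$; I expect the term $\pi(b)(\pi(b)(a))=b_0(b_0a)$ to vanish by (i) and thus yield $\langle a\cdot b,b\rangle=a*a$, which combined with (ii) (so $a\cdot b\in\partial(A)$ and $\langle\partial(A),b\rangle=\pi(B)(A)$ is computable via $\langle v,\partial(a)\rangle=\pi(v)(a)$) should force $a*a=0$. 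Coordinating these pairing and derivation identities consistently is the delicate bookkeeping step; everything else is routine once $b_0\partial(a)=0$ is in hand.
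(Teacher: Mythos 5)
There is a genuine gap, and it sits at step (ii), from which (iii) inherits it. Your argument for (ii) is: show $\pi(a\cdot b)=a\pi(b)=0$ and then invoke ``the proposition identifying $\Ker\pi$-type data with $A\partial(A)$'' to conclude $a\cdot b\in\partial(A)$. No such proposition exists in the paper, and it is false in exactly the situation at hand: once (i) gives $b_0a=0$, the map $\pi(b)$ kills both $1_A$ and $a$, so $\pi(b)=0$ and hence $\pi$ vanishes on \emph{all} of $B$, while $b\notin\partial(A)$. So $\pi(a\cdot b)=0$ carries no information about membership in $\partial(A)$. This matters downstream: your route to (iii) is $\langle a\cdot b,b\rangle=a*a-b_0b_0a=a*a$ (correct, and the same key identity the paper uses in the form $(a\cdot b)_1b=a*(b_1b)-b_0b_0a$), followed by evaluating $\langle a\cdot b,b\rangle=0$ \emph{using (ii)}. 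Without (ii) you only know $a\cdot b=\beta_1 b+\beta_2\partial(a)$, whence $\langle a\cdot b,b\rangle=\beta_1 a$, and the pairing identity yields only $\alpha_1=0$ and $\alpha_2=\beta_1$ --- not $\alpha_2=0$. The missing ingredient is the paper's computation of $b_0(a\cdot b)$ in two ways via $[u,a\cdot v]=\pi(u)(a)\cdot v+a\cdot[u,v]$: comparing $b_0(a\cdot b)=\tfrac12 a\cdot\partial(a)+(b_0a)\cdot b$ with $b_0(\beta_1b+\beta_2\partial(a))$ gives simultaneously $b_0a=0$ and the crucial relation $\beta_1=\tfrac12\alpha_2$; feeding $\beta_1=\tfrac12\alpha_2$ into $\alpha_1 1_A+\alpha_2 a=\beta_1 a$ forces $\alpha_1=\alpha_2=0$, after which (ii), (iii), (iv) all drop out in that order (the paper derives (ii) \emph{from} $\alpha_2=0$, the reverse of your logical order). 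Either restore that computation, or supply a substitute for the second equation (e.g.\ locality of $A$: writing $a=c1_A+m$ with $m$ in the maximal ideal gives $\alpha_1=-c^2$, so $\alpha_1=0$ forces $c=0$ and $\alpha_2=2c=0$); as written the proposal does not close.

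Two smaller points. Your alternative argument for (i) --- that $b_0a\in\C 1_A$ plus $\pi(b)$ being a derivation forces $b_0a=0$ --- is not a proof as stated (a derivation of a $2$-dimensional algebra can a priori send $a$ to a multiple of $1_A$); it does become one if you apply $b_0$ to $a*a=\alpha_1 1_A+\alpha_2 a$ and compare $2\lambda a$ with $\alpha_2\lambda 1_A$, so spell that out. Parts (iv), (v), (vi) are fine and essentially identical to the paper's treatment once (i)--(iii) are in place.
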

\begin{proof} Assume that $b_0b_0b=0$. Because $\partial(a)=2b_0b$, we have $b_0\partial(a)=0$. First, we will prove statement (i). Since $\partial(b_0a)=b_0\partial(a)=0$, we then have that $b_0a\in Ker(\partial)=\mathbb{C}1_A$. So, there exists $\lambda\in\mathbb{C} $ such that $b_0a=\lambda 1_A$. Moreover, $b_0b_0a=0.$

We set $a\cdot b=\beta_1 b+\beta_2 \partial(a)$. Recall that $a\cdot \partial(a)=\frac{1}{2}\alpha_2\partial(a) $. Since 
\begin{eqnarray*}
b_0(a\cdot b)&&=a\cdot b_0b+(b_0a)\cdot b=\frac{1}{2}a\cdot \partial(a)+\lambda b=\frac{1}{4}\alpha_2\partial(a)+\lambda b\text{ and}\\
b_0(a\cdot b)&&=b_0(\beta_1 b+\beta_2 \partial(a) )=\beta_1 \frac{1}{2}\partial(a)+\beta_2 \partial(b_0a)=\beta_1\frac{1}{2}\partial(a),
\end{eqnarray*}
we have $\beta_1\frac{1}{2}\partial(a)=\frac{1}{4}\alpha_2\partial(a)+\lambda b$.
Because $\{b,\partial(a)\}$ is linearly independent, we can conclude that $\lambda=0$ and $\frac{1}{4}\alpha_2=\frac{1}{2}\beta_1$. In addition, we have 
\begin{equation}\label{adotbB2}
b_0a=0,~\beta_1=\frac{1}{2}\alpha_2\text{ and }a\cdot b=\frac{1}{2}\alpha_2 b+\beta_2 \partial(a).\end{equation} This proves $(i)$.

Next, we will proof statement $(v)$. Recall that for $a',a''\in A$, $u\in B$, $$a''\cdot (a'\cdot u)-(a''*a')\cdot u=(u_0a'')\cdot \partial(a')+(u_0a')\cdot \partial(a'').$$ If we set $u=b$, $a'=a''=a$, then
$$a\cdot(a\cdot b)-(a*a)\cdot b=(b_0a)\cdot \partial(a)+(b_0a)\cdot \partial(a).$$ Since $b_0a=0$, we have 
\begin{equation}\label{aab} a\cdot (a\cdot b)-(a*a)\cdot b=0.\end{equation} 
Similarly, if we set $u=\partial(a)$, $a'=a''=a$, then
\begin{equation}\label{aapartiala} a\cdot (a\cdot \partial(a))-(a*a)\cdot \partial(a)=((\partial(a))_0a)\cdot \partial(a)+((\partial(a))_0a)\cdot\partial(a)=0. \end{equation}
By (\ref{aab}), (\ref{aapartiala}), we can conclude that $B$ is a module of $A$ as a commutative associative algebra. This proves $(v)$.

Now, we will prove statements $(ii)-(iv)$. Recall that for $u,v\in B$, $a\in A$, $$(a\cdot u)_1v=a*(u_1v)-u_0v_0a.$$ When we set $u=v=b$, we have 
$(a\cdot b)_1b=a*(b_1b)-b_0b_0a$. Since $b_1b=a$ and $b_0a=0$, we obtain that 
\begin{equation}\label{abaa}
(a\cdot b)_1b=a*a.
\end{equation} 
By (\ref{adotbB2}), we have $$a*a=(a\cdot b)_1b=(\frac{1}{2}\alpha_2 b+\beta_2 \partial(a) )_1b=\frac{1}{2}\alpha_2 a+\beta_2(b_0a)=\frac{1}{2}\alpha_2 a.$$
Since $a*a=\alpha_1 1_A+\alpha_2 a$ and $a*a=\frac{1}{2}\alpha_2 a$, we can conclude that 
%$\alpha_1{\bf 1}+\alpha_2 a=\frac{1}{2}\alpha_2 a$. Equivalently,
$\alpha_1 1_A+\frac{1}{2}\alpha_2 a=0$. This implies that $\alpha_1=0$, and $\alpha_2=0$. In addition, $$a*a=0,~a\cdot b=\beta_2\partial(a)\in \partial(A),~a\cdot\partial(a)=\frac{1}{2}\alpha_2\partial(a)=0.$$ This proves $(ii)-(iv)$. The statements $(v)$, $(vi)$ are consequences of $(i)$-$(iv)$.
\end{proof}

%%%%%%%%%%%%%%%%%%%%%%%%%%%%%%%%%%
\begin{thm}\label{Bcyclicsolvable} Let $A$ be a unital commutative associative algebra with the identity $1_A$. Let $B$ be a vertex $A$-algebroid such that $B$ is a cyclic non-Lie left Leibniz algebra, $\dim B=2$, $B\neq A\partial(A)$, and $Ker(\partial)=\mathbb{C}1_A$.  There exists $b\in B$ such that $\{ b,b_0b\}$ is a basis for $B$. We set $a=b_1b$. Then $\{b,\partial(a)\}$ is a basis of $B$ and $\{1_A,a\}$ is a basis for $A$. 

Assume that $b_0(b_0b)=b_0b$. If we set $a*a=\alpha_1 1_A +\alpha_2 a$, then we have

\noindent(i) $\alpha_1=-\frac{1}{4}\alpha_2^2$,

\noindent(ii) $b_0a=a-\frac{1}{2}\alpha_2 1_A$,

\noindent(iii) $a\cdot b=\frac{1}{2}\alpha_2 b+(\frac{1}{2}\alpha_2-1)\partial(a)$,

\noindent(iv) $a*a=\alpha_2 a-\frac{1}{4}\alpha_2^2 1_A$, $a\cdot\partial(a)=\frac{\alpha_2}{2}\partial(a)$ and $A\cong \mathbb{C}[x]/(x-\frac{1}{2}\alpha_2)^2$.

\noindent (v) The vector space $(a-\frac{1}{2}\alpha_2 1_A )$ is the unique maximal ideal of $A$. In addition, for $u\in (a-\frac{1}{2}\alpha_2 1_A)$, $w\in B/A\partial(A)$, $w_0u\in (a-\frac{1}{2}\alpha_2 1_A)$ and $u\cdot w=0$.
\end{thm}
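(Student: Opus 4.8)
The plan is to mirror the structure of the proof of Theorem \ref{Bcyclicnilpotent}, exploiting the single structural change $b_0(b_0b)=b_0b$ in place of $b_0(b_0b)=0$. Since $\partial(a)=2b_0b$, this hypothesis gives immediately $b_0\partial(a)=\partial(a)$, and hence $\partial(b_0a)=b_0\partial(a)=\partial(a)=\partial(a)$, so $b_0a-a\in \Ker(\partial)=\mathbb{C}1_A$. Writing $b_0a=a+\mu 1_A$ for some $\mu\in\mathbb{C}$ sets up the first computation. The goal of the opening step is to pin down $\mu$ in terms of $\alpha_2$, which I expect to force $\mu=-\tfrac12\alpha_2$ and thereby yield statement (ii).

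First I would compute $b_0(a\cdot b)$ in two ways exactly as in the nilpotent case. Writing $a\cdot b=\beta_1 b+\beta_2\partial(a)$, one evaluation uses the Leibniz-derivation rule $b_0(a\cdot b)=a\cdot b_0b+(b_0a)\cdot b$, which expands via $a\cdot\partial(a)=\tfrac12\alpha_2\partial(a)$ and $b_0a=a+\mu 1_A$ into a combination of $b$ and $\partial(a)$; the other evaluation applies $b_0$ directly to $\beta_1 b+\beta_2\partial(a)$, now remembering that $b_0\partial(a)=\partial(a)$ rather than $0$. Matching the coefficients of the independent vectors $b$ and $\partial(a)$ should produce two linear relations determining $\beta_1,\beta_2$ in terms of $\alpha_2,\mu$. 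Combined with the identity $(a\cdot b)_1b=a*(b_1b)-b_0b_0a=a*a-b_0b_0a$ (using $b_1b=a$), and the fact that $b_0b_0a=b_0(a+\mu 1_A)=b_0a=a+\mu 1_A$, this gives a second expression for $a*a$ as a combination of $1_A$ and $a$. Equating it to $\alpha_1 1_A+\alpha_2 a$ should simultaneously deliver $\mu=-\tfrac12\alpha_2$, the value $\beta_1=\tfrac12\alpha_2$, $\beta_2=\tfrac12\alpha_2-1$, and the constraint $\alpha_1=-\tfrac14\alpha_2^2$, thus establishing (i), (ii), (iii) and the bracket/dot formulas in (iv).

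For the algebra identification in (iv), once $a*a=\alpha_2 a-\tfrac14\alpha_2^2 1_A$ is known, I would observe that the element $t:=a-\tfrac12\alpha_2 1_A$ satisfies $t*t=a*a-\alpha_2 a+\tfrac14\alpha_2^2 1_A=0$, so $A=\mathbb{C}[t]$ with $t$ nilpotent of order two, giving $A\cong\mathbb{C}[x]/(x-\tfrac12\alpha_2)^2$ after identifying $x\leftrightarrow a$; the formula $a\cdot\partial(a)=\tfrac{\alpha_2}{2}\partial(a)$ is just the standing identity recorded before the theorem. For (v), the ideal $(t)$ is maximal because $A/(t)\cong\mathbb{C}$, and it is the unique maximal ideal since $A$ is local (Proposition \ref{cyclicclass} via $\Ker(\partial)=\mathbb{C}1_A$, applied earlier). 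To verify the module-action claims for $u\in(t)$ and $w\in B/A\partial(A)$, I would use that $B/A\partial(A)$ is a Lie $A$-algebroid and that $\partial(A)=A\partial(A)$ is one-dimensional, so $w$ is represented by $b$; I then check $b_0 t=b_0 a-\tfrac12\alpha_2 b_0 1_A=(a-\tfrac12\alpha_2 1_A)=t\in(t)$ and compute $t\cdot b=a\cdot b-\tfrac12\alpha_2 b=(\tfrac12\alpha_2-1)\partial(a)\in A\partial(A)$, so $t\cdot w=0$ in the quotient.

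The main obstacle will be the careful bookkeeping in the two-way computation of $b_0(a\cdot b)$: unlike the nilpotent case where $b_0\partial(a)=0$ killed several terms, here the term $\beta_2 b_0\partial(a)=\beta_2\partial(a)$ survives and must be tracked, and the coefficient-matching has to be done consistently with the undetermined constant $\mu$ still present, so that $\mu$ and the $\beta_i$ are solved simultaneously rather than sequentially. A secondary subtlety is confirming that the derived value $\alpha_1=-\tfrac14\alpha_2^2$ is genuinely forced and not merely consistent; this follows because the coefficient of $1_A$ in the two expressions for $a*a$ must agree, and I would make sure that no hidden relation among $b,\partial(a)$ is being silently used beyond their linear independence.
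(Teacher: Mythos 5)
Your proposal is correct and follows essentially the same route as the paper: write $b_0a=a+\mu 1_A$, compute $b_0(a\cdot b)$ in two ways to force $\mu=-\tfrac12\alpha_2$ and $\beta_1=\tfrac12\alpha_2$, then use $(a\cdot b)_1b=a*a-b_0b_0a$ to extract $\beta_2$ and the constraint on $\alpha_1$. The only (harmless) difference is that the paper first determines $a*a=\alpha_2 a-\tfrac14\alpha_2^2 1_A$ by computing $b_0(a*a)$ in two ways, whereas you read off $\alpha_1=-\tfrac14\alpha_2^2$ directly from the $1_A$-coefficient of the $(a\cdot b)_1b$ identity — both are valid, and your remaining verifications for (iv) and (v) match the paper's.
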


\begin{proof} Since $\partial(b_0a)=\partial(a)$, we then have that $\partial(b_0a-a)=0$, and $b_0a-a\in Ker(\partial)$. Since $Ker(\partial)=\mathbb{C} 1_A$, we can conclude that $b_0a=a+\rho 1_A$ for some $\rho\in\mathbb{C}$.

Now, we set $a\cdot b=\beta_1 b+\beta_2\partial(a)$. Since $b_0(a\cdot b)=a\cdot (b_0b)+(b_0a)\cdot b$, and $b_0a=a+\rho 1_A$, we then have that $b_0(a\cdot b)=a\cdot \frac{1}{2}\partial(a)+(a+\rho 1_A )\cdot b.$
Since 
\begin{eqnarray*}
b_0(a\cdot b)&&=b_0(\beta_1 b+\beta_2\partial(a) )\\
%&&=\beta_1 b_0b+\beta_2 b_0\partial(a)\\
&&=\beta_1\frac{1}{2}\partial(a)+\beta_2\partial(b_0a)\\
&&=\frac{1}{2}\beta_1\partial(a)+\beta_2\partial(a)\\
&&=(\frac{1}{2}\beta_1+\beta_2)\partial(a)
\end{eqnarray*}
and 
\begin{eqnarray*}
b_0(a\cdot b)&&=a\cdot \frac{1}{2}\partial(a)+(a+\rho 1_A )\cdot b\\
&&=\frac{1}{4}\alpha_2\partial(a)+a\cdot b+\rho b\\
&&=\frac{1}{4}\alpha_2\partial(a)+\beta_1 b+\beta_2\partial(a)+\rho b\\
&&=(\beta_1+\rho)b+(\frac{1}{4}\alpha_2+\beta_2)\partial(a),
\end{eqnarray*}
we can conclude that $(\frac{1}{2}\beta_1+\beta_2)\partial(a)  = (\beta_1+\rho)b+(\frac{1}{4}\alpha_2+\beta_2)\partial(a).$ Because $\{b,\partial(a)\}$ is a basis, 
we have $\beta_1=-\rho\text{ and }\beta_1=\frac{1}{2}\alpha_2.$ These imply that $\rho=-\frac{1}{2}\alpha_2.$
In summary, we have 
\begin{eqnarray*}
b_0a&&=a-\frac{1}{2}\alpha_2 1_A,\\
a\cdot b&&=\frac{1}{2}\alpha_2 b+\beta_2\partial(a).
\end{eqnarray*} This proves $(ii)$.

Since $b_0(a*a)=b_0(\alpha_1 1_A+\alpha_2 a)=\alpha_2b_0a=\alpha_2(a-\frac{1}{2}\alpha_2 1_A)=\alpha_2 a-\frac{1}{2}\alpha_2^2 1_A$ and 
$$b_0(a*a)=a*(b_0a)+(b_0a)*a=2 a*(b_0a)=2 a*(a-\frac{1}{2}\alpha_2 1_A)= 2 a*a-\alpha_2 a,$$
we have  
$$a*a=\alpha_2 a-\frac{1}{4}\alpha_2^2 1_A.$$ This proves $(iv)$. 

Since $a*a=\alpha_1 1_A+\alpha_2 a$ and $\{1_A, a\}$ is a basis of $A$, we can conclude that 
$$\alpha_1=-\frac{1}{4}\alpha_2^2.$$ This proves $(i)$.

Now, we will solve for $\beta_2$. Recall that $(a\cdot b)_1b=a*a-b_0(b_0a) $. Since 
\begin{eqnarray*}
(a\cdot b)_1b&=&(\frac{1}{2}\alpha_2 b+\beta_2\partial(a) )_1b\\
&=&\frac{1}{2}\alpha_2 a+\beta_2b_0a,\\
&=&\frac{1}{2}\alpha_2 a+\beta_2(a-\frac{1}{2}\alpha_2 1_A )\text{ and }\\
(a\cdot b)_1b&=&a*a-b_0(b_0a)\\
&=&\alpha_2a-\frac{1}{4}\alpha_2^2 1_A-b_0(a-\frac{1}{2}\alpha_2 1_A )\\
&=&\alpha_2a-\frac{1}{4}\alpha_2^2 1_A-b_0a\\
&=&\alpha_2a-\frac{1}{4}\alpha_2^2 1_A-(a-\frac{1}{2}\alpha_2 1_A )\\
\end{eqnarray*}
we have 
$\frac{1}{2}\alpha_2 a+\beta_2(a-\frac{1}{2}\alpha_2 1_A )=\alpha_2a-\frac{1}{4}\alpha_2^2 1_A-(a-\frac{1}{2}\alpha_2 1_A ).$ 
Equivalently, we have
$$(\frac{1}{2}\alpha_2+(-1-\beta_2))a+(-\frac{1}{4}\alpha_2^2+(1+\beta_2)(\frac{1}{2}\alpha_2))1_A=0.$$
Using the fact that $\{1_A,a\}$ is linearly independent, we can conclude that 
$(\frac{1}{2}\alpha_2+(-1-\beta_2))=0$ and $(-\frac{1}{4}\alpha_2^2+(1+\beta_2)(\frac{1}{2}\alpha_2))=0.$ This implies that $$\beta_2=\frac{1}{2}\alpha_2-1.$$
Therefore, $a\cdot b=\frac{1}{2}\alpha_2 b+(\frac{1}{2}\alpha_2-1)\partial(a).$ This proves $(iii)$. The statement $(v)$ follows immediately from $(ii)$-$(iv)$.
\end{proof}

%%%%%%%%%%%%%%%%%%%%%%%%%%%%%%%%%%%%%%%%%%%%%%%%%%%%%%%%%%%%%
\subsection{Classification of Vertex Algebroids associated with $3$-dimensional cyclic non-Lie Leibniz algebras}

\ \ 

There are 4 types of cyclic non-Lie left Leibniz algebras (cf. Proposition \ref{cyclicclass} and Corollary \ref{cyclicclass3}). In this subsection, we will classify vertex algebroids associated with  these 4 types of 3-dimensional cyclic non-Lie left Leibniz algebras. The results of these classification are in Theorem \ref{3nilcase}-Theorem \ref{3solvablecase3}.

\begin{lem}\label{dim3case} Let $A$ be a unital commutative associative algebra with the identity $1_A$. Let $B$ be a vertex $A$-algebroid such that $B$ is a cyclic non-Lie left Leibniz algebra, $\dim B=3$, $B\neq A\partial(A)$, and $Ker(\partial)=\mathbb{C}1_A$. Then  there is $b\in B$ such that $\{b, b_0b, (b_0)^2b\}$ is a basis of $B$. In addition, if we set $a=b_1b$, then $\{1_A, a, b_0a\}$ is a basis of $A$, and $\{b,\partial(a),\partial(b_0a)\}$ is a basis of $B$. 

Assume that
\begin{eqnarray}
&&a\cdot b=\beta b+\gamma_0\partial(a)+\gamma_1\partial(b_0a),\label{3relab}\\
&&b_0\partial(b_0a)=c_0 \partial(a)+c_1 \partial(b_0a).\label{3relb0b0a}
\end{eqnarray} 
Here, $\beta, \gamma_0, \gamma_1, c_0, c_1\in \mathbb{C}$. Then we have the following statements: 
\begin{eqnarray*}
&&b_0b_0a=\chi 1_A+c_0a+c_1b_0a\text{ for some }\chi\in\mathbb{C},\\
&&a*(b_0a)= \beta b_0a,\\
&&(b_0a)*(b_0a)=0,\\
&&(b_0a)\cdot \partial(b_0a)=0,\\
&&(b_0a)\cdot b\in A\partial(A),\\
&&a*a=(\gamma_1+1)\chi 1_A+(\beta +(\gamma_1+1)c_0)a+(\gamma_0+(\gamma_1+1)c_1)b_0a,\\
&&a\cdot \partial(a)=\frac{1}{2} ((\beta +(\gamma_1+1)c_0)\partial(a)+(\gamma_0+(\gamma_1+1)c_1)\partial(b_0a)).
\end{eqnarray*}
Moreover, the following statements hold
\begin{eqnarray*}
&&(\gamma_0+(\gamma_1+1)c_1)\chi =0,\\
&&(\gamma_0+(\gamma_1+1)c_1 )c_0=0,\\
&&\beta=(\gamma_1+1)c_0+(\gamma_0+(\gamma_1+1)c_1)c_1 .
\end{eqnarray*}
\end{lem}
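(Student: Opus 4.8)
The plan is to treat the whole lemma as a sequence of computations inside the $1$-truncated conformal algebra $A\oplus B$, repeatedly using the reformulated vertex $A$-algebroid identities (the versions $a_0(a'\cdot v)=a'*(a_0v)$, $(a\cdot u)_1v=a*(u_1v)-u_0v_0a$, $u_0(a*a')=a*(u_0a')+(u_0a)*a'$, $\partial(a*a')=a\cdot\partial(a')+a'\cdot\partial(a)$) together with the structural facts already in force: $\partial$ is injective modulo $\mathrm{Ker}(\partial)=\mathbb{C}1_A$, $A\partial(A)=\partial(A)$ has basis $\{\partial(a),\partial(b_0a)\}$, and $\{1_A,a,b_0a\}$, $\{b,\partial(a),\partial(b_0a)\}$ are bases of $A$ and $B$. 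Throughout I would use $b_0b=\tfrac12\partial(a)$, $b_0\partial(a')=\partial(b_0a')$, $\partial(1_A)=0$, $b_0 1_A=0$, and the commutativity/derivation rules $(\partial a')_0=0$, $(\partial a')_1=-a'_0$, $a'_0u=-u_0a'$, $\langle v,\partial(a')\rangle=\pi(v)(a')$.

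\textbf{The scalar formulas.} I would pin these down in the order in which they decouple. The relation $b_0b_0a=\chi 1_A+c_0a+c_1b_0a$ is immediate: applying $\partial$ and using $\partial(b_0b_0a)=b_0\partial(b_0a)=c_0\partial(a)+c_1\partial(b_0a)$ places $b_0b_0a-c_0a-c_1b_0a$ in $\mathrm{Ker}(\partial)=\mathbb{C}1_A$. Next, $a*(b_0a)=\beta b_0a$ is obtained by evaluating $(a\cdot b)_1\partial(a)$ in two ways: expanding $a\cdot b$ termwise gives $\beta\langle b,\partial(a)\rangle=\beta b_0a$ (the remaining terms vanish since $(\partial a)_1\partial(a)=0$ and $(\partial(b_0a))_1\partial(a)=0$), while commutativity together with $a_0(a\cdot b)=a*(a_0b)=-a*(b_0a)$ gives $a*(b_0a)$. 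Then $a*a$ follows from $(a\cdot b)_1b=a*a-b_0b_0a$: expanding the left side as $\beta a+\gamma_0 b_0a+\gamma_1 b_0b_0a$ and substituting the first relation yields the stated value, and differentiating it gives $a\cdot\partial(a)=\tfrac12\partial(a*a)$.

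\textbf{The ``Moreover'' relations.} With $a*a$ and $a*(b_0a)$ in hand, the three identities come from applying the derivation $b_0$ to $a*a$. On one side the Leibniz rule gives $b_0(a*a)=2\,a*(b_0a)=2\beta b_0a$; on the other I expand $b_0(a*a)$ directly from the formula for $a*a$, using $b_0 1_A=0$ and the first relation to rewrite $b_0b_0a$. Writing $D=\gamma_0+(\gamma_1+1)c_1$ and comparing coefficients in the basis $\{1_A,a,b_0a\}$ produces exactly $D\chi=0$, $Dc_0=0$, and $\beta=(\gamma_1+1)c_0+Dc_1$.

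\textbf{The $b_0a$-products and the main obstacle.} The delicate part is the triple $(b_0a)*(b_0a)=0$, $(b_0a)\cdot\partial(b_0a)=0$, and $(b_0a)\cdot b\in A\partial(A)$, since the products involving $b_0a$ are coupled and no single axiom instance isolates them. For the first I would compute $(b_0a)*(b_0a)$ twice through the Leibniz rule: from $b_0(a*(b_0a))=a*(b_0b_0a)+(b_0a)*(b_0a)$, after using the Moreover relations to cancel the $1_A$- and $b_0a$-components, one gets $(b_0a)*(b_0a)=\nu a$ with $\nu=-\chi-(\gamma_1+1)c_0^2$; applying $b_0$ to this and comparing coefficients gives the additional constraint $\nu=2(\chi+c_0\beta)$, and since $c_0\beta=(\gamma_1+1)c_0^2$ by the Moreover relations, the two expressions for $\nu$ force $\chi=-(\gamma_1+1)c_0^2$, whence $\nu=0$ and $(b_0a)*(b_0a)=0$. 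Differentiating and using $\partial((b_0a)*(b_0a))=2(b_0a)\cdot\partial(b_0a)$ gives $(b_0a)\cdot\partial(b_0a)=0$. Finally, $(b_0a)*(b_0a)=0$ shows $b_0a$ is nilpotent, hence lies in the maximal ideal $\mathfrak{m}$ of the local algebra $A$ (local because $\mathrm{Ker}(\partial)=\mathbb{C}1_A$); since $B/A\partial(A)$ is a one-dimensional \emph{associative} unital $A$-module (the non-associativity defect lands in $A\partial(A)$), the $A$-action factors through $A/\mathfrak{m}=\mathbb{C}$, so $b_0a$ acts as zero and $(b_0a)\cdot b\in A\partial(A)$. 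I expect this last cluster — disentangling the $b_0a$-products and recognizing that the Moreover relations plus the second $b_0$-computation force $\chi=-(\gamma_1+1)c_0^2$ — to be the step demanding the most care.
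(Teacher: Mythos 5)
Your proposal is correct, and most of it runs along the same lines as the paper: identifying $b_0b_0a$ modulo $\mathrm{Ker}(\partial)=\mathbb{C}1_A$, computing $a*a$ from $(a\cdot b)_1b=a*(b_1b)-b_0b_0a$, reading off $a\cdot\partial(a)=\tfrac12\partial(a*a)$, and extracting the three ``Moreover'' relations by comparing $b_0(a*a)=2a*(b_0a)$ against the expansion of $b_0(a*a)$ in the basis $\{1_A,a,b_0a\}$ are exactly the paper's steps. Where you genuinely diverge is on the cluster $a*(b_0a)$, $(b_0a)\cdot b\in A\partial(A)$, $(b_0a)*(b_0a)=0$. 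The paper gets these almost for free by passing to the quotient Lie $A$-algebroid $B/A\partial(A)$: since $\bar b_0\bar b=0$ and $a\cdot\bar b=\beta\bar b$ there, the Leibniz rule $\bar b_0(a\cdot \bar b)=a\cdot(\bar b_0\bar b)+(b_0a)\cdot\bar b$ gives $(b_0a)\cdot\bar b=0$ at once, and then $a*(b_0a)=(a\cdot\bar b)_0a=\beta b_0a$ and $(b_0a)*(b_0a)=((b_0a)\cdot\bar b)_0a=0$ follow because $A\partial(A)$ annihilates $A$ under the $0$-th product. You instead stay inside the vertex algebroid: your derivation of $a*(b_0a)=\beta b_0a$ from the two evaluations of $(a\cdot b)_1\partial(a)$ is valid, and your two-pass computation (first $b_0(a*(b_0a))$ to get $(b_0a)*(b_0a)=\nu a$ with $\nu=-\chi-(\gamma_1+1)c_0^2$, then $b_0$ of that to get $\nu=2\chi+2c_0\beta$) checks out --- the $1_A$-coefficient $(\beta-(\gamma_1+1)c_0)\chi$ does vanish since it equals $c_1(D\chi)$, and the two expressions for $\nu$ do force $\chi=-(\gamma_1+1)c_0^2$ and hence $\nu=0$; the local-ring/nilpotence argument for $(b_0a)\cdot b\in A\partial(A)$ is likewise sound, though note it reverses the paper's logical order (the paper proves $(b_0a)\cdot b\in A\partial(A)$ first and deduces $(b_0a)*(b_0a)=0$ from it). What the paper's route buys is brevity --- the step you flag as the main obstacle is its easiest step; what your route buys is the sharper identity $\chi=-(\gamma_1+1)c_0^2$, which is not in the lemma but is consistent with, and would shorten the determination of $\chi$ in, Theorems \ref{3nilcase}--\ref{3solvablecase3}.
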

\begin{proof} Because $\partial((b_0)^2a-(c_0a+c_1b_0a))=0\text{ (cf. equation (\ref{3relb0b0a})), and }Ker(\partial)=\mathbb{C}1_A,$ there exists $\chi\in \mathbb{C}$ such that $(b_0)^2 a=\chi 1_A+c_0a+c_1 b_0a$. Since $(b+A\partial(A))_0(a\cdot (b+A\partial(A))=a\cdot((b+A\partial(A))_0(b+A\partial(A)))+(b_0a)\cdot (b+A\partial(A)),$ and $a\cdot b=\beta b+\gamma_0\partial(a)+\gamma_1\partial(b_0a)$ we have 
$$(b_0a)\cdot (b+A\partial(A))=0+A\partial(A),\text{ and }(b_0a)\cdot b\in A\partial(A).$$ Using the fact that $B/A\partial(A)$ is a Lie $A$-algebroid, we have 
$$a*(b_0a)=a*((b+A\partial(A))_0a)=(a\cdot (b+A\partial(A))_0a=(\beta b+A\partial(A))_0a=\beta b_0a.$$
Because $(b_0a)\cdot b\in A\partial(A)$, we then obtain that $$(b_0a)*(b_0a)=(b_0a)*((b+A\partial(A))_0a)=((b_0a)\cdot (b+A\partial(A)))_0a=0.$$ 
Since $(a\cdot b)_1b=a*(b_1b)-b_0b_0a$, we have
\begin{eqnarray*}
a*a&=&a*(b_1b)\\
&=&(a\cdot b)_1 b+(b_0)^2a\\
&=&(\beta b+\gamma_0\partial(a)+\gamma_1\partial(b_0a) )_1b+(b_0)^2a\\
&=&\beta a+\gamma_0b_0a+\gamma_1(b_0)^2a+(b_0)^2a\\
&=&\beta a +\gamma_0 b_0a+(\gamma_1+1)(b_0)^2a\\
&=&\beta a +\gamma_0 b_0a+(\gamma_1+1)(\chi 1_A+c_0 a+c_1 b_0a )\\
&=&(\gamma_1+1)\chi 1_A+(\beta +(\gamma_1+1)c_0)a+(\gamma_0+(\gamma_1+1)c_1)b_0a.
\end{eqnarray*}
Also, since $b_0(a*a)=2a*(b_0a)=2\beta b_0a$, we can conclude that 
\begin{eqnarray*}
&&2\beta b_0a\\
&&=b_0(a*a)\\
&&=(\beta +(\gamma_1+1)c_0)b_0a+(\gamma_0+(\gamma_1+1)c_1)(b_0)^2a\\
&&=(\beta +(\gamma_1+1)c_0)b_0a+(\gamma_0+(\gamma_1+1)c_1)(\chi 1_A+c_0a+c_1 b_0a )\\
&&=(\gamma_0+(\gamma_1+1)c_1)\chi 1_A+(\gamma_0+(\gamma_1+1)c_1 )c_0a+(\beta+(\gamma_1+1)c_0+(\gamma_0+(\gamma_1+1)c_1)c_1 )b_0a.
\end{eqnarray*}
Using the fact that $\{1_A, a, b_0a\}$ is a basis for $A$, we have 
\begin{eqnarray*}
&&(\gamma_0+(\gamma_1+1)c_1)\chi =0,\\
&&(\gamma_0+(\gamma_1+1)c_1 )c_0=0,\\
&&(\gamma_1+1)c_0+(\gamma_0+(\gamma_1+1)c_1)c_1=\beta.
\end{eqnarray*}
\end{proof}
%%%%%%%%%%%%%%%%%%%%%%%%%%%%%%%%%%%%%%%%%%%%%%%%%%%%%%%%
\begin{thm}\label{3nilcase} 
Let $A$ be a unital commutative associative algebra with the identity $1_A$. Let $B$ be a vertex $A$-algebroid that has properties as in Lemma \ref{dim3case}.

Assume that $b_0\partial(b_0a)=0$. Then 
\begin{eqnarray*}
&&\beta=0,~ a*(b_0a)=0,~ a*a=(\gamma_1+1)\chi 1_A+\gamma_0 b_0a,~a\cdot b=\gamma_0\partial(a)+\gamma_1\partial(b_0a),\\
&&a\cdot \partial(a)=\frac{1}{2}\gamma_0\partial(b_0a),\text{ and }\gamma_0\chi=0.
\end{eqnarray*}
\noindent (i) When $\gamma_0=0$, we have \begin{eqnarray*}
&&(b_0a)\cdot b=0,~\chi=0,~a*a=0,~a\cdot\partial(a)=0,~a\cdot b=\gamma_1\partial(b_0a),\\
&&A\cong \mathbb{C}[x,y]/(x^2,y^2,xy).
\end{eqnarray*}
The vector space $(a, b_0a)$ is the unique maximal ideal of $A$. In addition, for $u\in (a, b_0a)$, $w\in B/A\partial(A)$, $w_0u\in (a, b_0a)$ and $u\cdot w=0$.
    
\noindent (ii) When $\gamma_0\neq 0$, we have \begin{eqnarray*}
&&a*a=\gamma_0 b_0a,~a*a*a=0,~(b_0a)\cdot b=\frac{3}{4}\gamma_0\partial(b_0a),~A\cong \mathbb{C}[x]/(x^3).
\end{eqnarray*} 
The vector space $(a )$ is the unique maximal ideal of $A$. For $u\in (a)$, $w\in B/A\partial(A)$, $w_0u\in (a)$ and $u\cdot w=0$.
\end{thm}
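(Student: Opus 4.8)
The plan is to run the entire argument off Lemma \ref{dim3case}, specialized to the hypothesis $b_0\partial(b_0a)=0$. First I would note that this hypothesis forces $c_0=c_1=0$ in (\ref{3relb0b0a}), since $\{\partial(a),\partial(b_0a)\}$ is part of a basis of $B$ and hence linearly independent. Feeding $c_0=c_1=0$ into the seven identities and three scalar constraints of Lemma \ref{dim3case} is then pure substitution and delivers at once all of the ``preamble'' relations: $b_0b_0a=\chi 1_A$, $\beta=0$, $a*(b_0a)=0$, $a*a=(\gamma_1+1)\chi 1_A+\gamma_0 b_0a$, $a\cdot b=\gamma_0\partial(a)+\gamma_1\partial(b_0a)$, $a\cdot\partial(a)=\tfrac12\gamma_0\partial(b_0a)$, together with $\gamma_0\chi=0$. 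No new computation is needed at this stage.

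The crux, and the step I expect to be the main obstacle, is upgrading $\gamma_0\chi=0$ to $\chi=0$: this does not follow by bookkeeping, because in case (i) ($\gamma_0=0$) the relation $\gamma_0\chi=0$ is vacuous, and associativity of $A$ only yields $(\gamma_1+1)\chi=0$ rather than $\chi=0$. The key idea I would use is that $\pi(b)=b_0$ is a derivation of $(A,*)$, i.e. $b_0(a'*a'')=(b_0a')*a''+a'*(b_0a'')$. Applying this to $a*(b_0a)=0$, and using $b_0b_0a=\chi 1_A$ together with $(b_0a)*(b_0a)=0$ from Lemma \ref{dim3case}, gives
\[
0=b_0\bigl(a*(b_0a)\bigr)=(b_0a)*(b_0a)+a*(b_0b_0a)=0+\chi\,a,
\]
so $\chi a=0$ and hence $\chi=0$ since $a\neq 0$. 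This argument is uniform across both cases, and it immediately produces $a*a=\gamma_0 b_0a$ and then $a*a*a=(a*a)*a=\gamma_0\,(b_0a)*a=0$.

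Next I would pin down the only remaining unknown, $(b_0a)\cdot b$, which Lemma \ref{dim3case} locates merely inside $A\partial(A)$. The efficient route is to differentiate the known value of $a\cdot b$: from the axiom $b_0(a\cdot b)=a\cdot(b_0b)+(b_0a)\cdot b$ with $b_0b=\tfrac12\partial(a)$ and $b_0\partial(b_0a)=0$, one side equals $b_0(\gamma_0\partial(a)+\gamma_1\partial(b_0a))=\gamma_0\partial(b_0a)$, while the other equals $\tfrac12 a\cdot\partial(a)+(b_0a)\cdot b=\tfrac14\gamma_0\partial(b_0a)+(b_0a)\cdot b$; comparison gives $(b_0a)\cdot b=\tfrac34\gamma_0\partial(b_0a)$, which is $0$ when $\gamma_0=0$. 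With $\chi=0$ established, the algebra $A=\mathrm{Span}\{1_A,a,b_0a\}$ is then identified directly: if $\gamma_0=0$ all products among $a$ and $b_0a$ vanish, so $A\cong\mathbb{C}[x,y]/(x^2,xy,y^2)$ with maximal ideal $(a,b_0a)$; if $\gamma_0\neq0$ then $a^2=\gamma_0 b_0a\neq0$ while $a^3=0$, so $a$ is a generator and $A\cong\mathbb{C}[x]/(x^3)$ with maximal ideal $(a)$. Locality and uniqueness of the maximal ideal follow in both cases from the cited consequence of $\mathrm{Ker}(\partial)=\mathbb{C}1_A$.

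Finally, for the module-action assertions I would exploit that $B/A\partial(A)$ is one-dimensional, spanned by $b+A\partial(A)$, since $B=\mathrm{Span}\{b,\partial(a),\partial(b_0a)\}$ and $A\partial(A)=\partial(A)=\mathrm{Span}\{\partial(a),\partial(b_0a)\}$. Both statements therefore reduce to testing $w=b$ against the vector-space generators $u\in\{a,b_0a\}$ of the maximal ideal. For the action on $A$: $b_0a$ already lies in the ideal and $b_0(b_0a)=\chi 1_A=0$, so $w_0u$ stays in the ideal. For the action on $B$: both $a\cdot b$ and $(b_0a)\cdot b$ were just shown to lie in $A\partial(A)$, whence $u\cdot w=0$ in $B/A\partial(A)$. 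This closes out both cases and completes the plan.
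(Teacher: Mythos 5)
Your proposal is correct and follows the same overall strategy as the paper: specialize Lemma \ref{dim3case} to $c_0=c_1=0$, read off the preamble identities, and then determine $\chi$ and $(b_0a)\cdot b$ in each case. The one place where you genuinely diverge is the derivation of $\chi=0$ in case (i), where $\gamma_0\chi=0$ is vacuous. The paper first computes $(b_0a)\cdot b=0$ and then invokes the axiom $a\cdot(a''\cdot u)-(a*a'')\cdot u=(u_0a)\cdot\partial(a'')+(u_0a'')\cdot\partial(a)$ with $u=b$, $a''=b_0a$; together with $a*(b_0a)=0$ and $(b_0a)\cdot\partial(b_0a)=0$ this gives $(b_0b_0a)\cdot\partial(a)=\chi\,\partial(a)=0$, hence $\chi=0$ since $\partial(a)\neq 0$. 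You instead apply the derivation axiom $b_0(a*a')=(b_0a)*a'+a*(b_0a')$ to $a*(b_0a)=0$, obtaining $0=(b_0a)*(b_0a)+a*(\chi 1_A)=\chi a$ and hence $\chi=0$ since $a\neq 0$. Both are legitimate; yours is marginally cleaner because it is uniform in $\gamma_0$ and does not require computing $(b_0a)\cdot b$ first, whereas the paper's case-(i) argument needs $(b_0a)\cdot b=0$ before it can kill $\chi$. Your remaining steps --- obtaining $(b_0a)\cdot b=\frac{3}{4}\gamma_0\partial(b_0a)$ from $b_0(a\cdot b)=a\cdot(b_0b)+(b_0a)\cdot b$, identifying $A$ as $\mathbb{C}[x,y]/(x^2,xy,y^2)$ or $\mathbb{C}[x]/(x^3)$, and reducing the module-action claims to the single generator $b+A\partial(A)$ --- coincide with the paper's (largely implicit) argument.
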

\begin{proof} By setting $c_0$, $c_1$ in Lemma \ref{dim3case} to be $0$, we have \begin{eqnarray*}
&&\beta=0,~ a*(b_0a)=0,~ a*a=(\gamma_1+1)\chi 1_A+\gamma_0 b_0a,~a\cdot b=\gamma_0\partial(a)+\gamma_1\partial(b_0a),\\
&&a\cdot \partial(a)=\frac{1}{2}\gamma_0\partial(b_0a),\text{ and }\gamma_0\chi=0.
\end{eqnarray*}

Now, we first consider the case when $\gamma_0=0$. Recall that for $u,v\in B, a'\in A$, we have $u_0(a'\cdot v)-a'\cdot(u_0v)=(u_0a')\cdot v.$ When we set $u=v=b$ and $a'=a$, we have 
$$b_0(a\cdot b)-a\cdot (b_0b)=(b_0a)\cdot b.$$ Since $a\cdot b_0b=\frac{1}{2}a\cdot\partial(a)=0$ and $a\cdot b=\gamma_1\partial(b_0a)$, we have 
\begin{equation}(b_0a)\cdot b=b_0(\gamma_1\partial(b_0a))=\gamma_1\partial(b_0b_0a)=0.\end{equation} Recall that for $a',a''\in A$, $u\in B$, we have $a'\cdot(a''\cdot u)-(a'*a'')\cdot u=(u_0a')\cdot \partial(a'')+(u_0a'')\cdot\partial(a').$ Now, if we set $a'=a$, $a''=b_0a$, $u=b$, then
$$a\cdot(b_0a\cdot b)-(a*(b_0a))\cdot b=(b_0a)\cdot \partial(b_0a)+(b_0b_0a)\cdot \partial(a). $$ Because $b_0a\cdot \partial(b_0a)=0$, $(b_0a)\cdot b=0$ and $a*(b_0a)=0$, we have $(b_0b_0a)\cdot \partial(a)=0$. 
Also, because $b_0b_0a=\chi 1_A+c_0a+c_1b_0a$, and $c_0=c_1=0$, we have $b_0b_0a=\chi 1_A$. Since $(b_0b_0a)\cdot \partial(a)=0$, we have $\chi \partial(a)=0$. Therefore, \begin{equation}\chi =0,\text{ and }a*a=0.\end{equation} Hence, $A\cong\mathbb{C}[x,y]/(x^2,y^2,xy)$. In addition, the vector space $(a, b_0a)$ is the unique maximal ideal of $A$. For $u\in (a, b_0a)$, $w\in B/A\partial(A)$, $w_0u\in (a, b_0a)$ and $u\cdot w=0$ This completes the case when $\gamma_0=0$.

Next, we consider the case when $\gamma_0\neq 0$. Because $\gamma_0\chi=0$ and $\gamma_0\neq 0$, we have $\chi=0$. Since $\chi=0$ and $a*a=(\gamma_1+1)\chi 1_A+\gamma_0b_0a$, we have \begin{equation}a*a=\gamma_0b_0a.\end{equation} Since $a*(b_0a)=0$, we can conclude that \begin{equation}a*(a*a)=a*(\gamma_0b_0a)=0.\end{equation}
Recall that for $u,v\in B$, $a'\in A$, we have $u_0(a'\cdot v)-a'\cdot (u_0v)=(u_0a')\cdot v$. When we set $u,v=b$ and $a'=a$, we have 
$$b_0(a\cdot b)-a\cdot b_0b=(b_0a)\cdot b.$$ 
Since $a\cdot b=\gamma_0\partial(a)+\gamma_1\partial(b_0a)$ and $b_0b=\frac{1}{2}\partial(a)$, we then have that $$b_0(\gamma_0\partial(a)+\gamma_1\partial(b_0a))-a\cdot \frac{1}{2}\partial(a)=(b_0a)\cdot b.$$ Equivalently,
$\gamma_0\partial(b_0a)+\gamma_1\partial(b_0b_0a)-\frac{1}{2}a\cdot\partial(a)=(b_0a)\cdot b$. Since $\partial(b_0b_0a)=0$ and $a\cdot\partial(a)=\frac{1}{2}\gamma_0\partial(b_0a)$, we have 
\begin{equation}(b_0a)\cdot b=\gamma_0\partial(b_0a)-\frac{1}{4}\gamma_0\partial(b_0a)=\frac{3}{4}\gamma_0\partial(b_0a).\end{equation}
Hence, $A\cong \mathbb{C}[x]/(x^3)$. Moreover, $(a )$ is the unique maximal ideal of $A$. For $u\in (a)$, $w\in B/A\partial(A)$, $w_0u\in (a)$ and $u\cdot w=0$. This completes the case when $\gamma_0\neq 0$.
\end{proof}
%%%%%%%%%%%%%%%%%%%%%%%%%%%%%%%%%%%%%%%%%%%%%%%%%%%%%%

%%%%%%%%%%%%%%%%%%%%%%%%%%%%%%%%%%%%%%%%%%%%%%%%%%%%%%%%%%%

\begin{thm}\label{3solvablecase1} 
Let $A$ be a unital commutative associative algebra with the identity $1_A$. Let $B$ be a vertex $A$-algebroid that has properties as in Lemma \ref{dim3case}.

Assume that 
$$b_0\partial(b_0a)=\partial(a)-\left(\frac{\alpha+1}{\sqrt{\alpha}}\right)i \partial(b_0a)~(\alpha\notin\{0,1\}).$$ Then the following statments hold:
\begin{eqnarray*}
&&(\gamma_0+(\gamma_1+1)(-\left(\frac{\alpha+1}{\sqrt{\alpha}}\right)i  ))=0,~\beta=\gamma_1+1,~a*(b_0a)=(\gamma_1+1)b_0a,\\
&&a\cdot \partial(a)=(\gamma_1+1)\partial(a),\\
&&(b_0a)\cdot b=\gamma_1\partial(a)+\frac{\alpha+1}{\sqrt{\alpha}}i\partial(b_0a),\\
&&a\cdot b=(\gamma_1+1) b+(\gamma_1+1)\left(\frac{\alpha+1}{\sqrt{\alpha}}\right) i\partial(a)+\gamma_1\partial(b_0a).
\end{eqnarray*}

\noindent (i) If $\gamma_1+1\neq 0$ then 
\begin{eqnarray*}
&&\chi=-\gamma_1-1,\\
&&b_0b_0a=(-\gamma_1-1 )1_A+a-\left(\frac{\alpha+1}{\sqrt{\alpha}}\right)i b_0a,\\
&&a*a=-(\gamma_1+1)^2 1_A+2(\gamma_1+1)a,\\
&&A\cong \mathbb{C}[x,y]/( (x-(\gamma_1+1))^2, (x-(\gamma_1+1))y, y^2).
\end{eqnarray*}
The vector space $(a-(\gamma_1+1) ) 1_A )$ is the unique maximal ideal of $A$. In addition, for $u\in (a-( \gamma_1+1_A) 1_A)$, $w\in B/A\partial(A)$, $w_0u\in (a-(\gamma_1+1 ) 1_A)$ and $u\cdot w=0$.

\vspace{0.2cm}

\noindent (ii) If $\gamma_1+1=0$, then 
\begin{eqnarray*}
&&\chi=0,\\ 
&&b_0b_0a=a-\left(\frac{\alpha+1}{\sqrt{\alpha}}\right)i b_0a,\\
&&a*a=0,\\
&&a\cdot b=-\partial(b_0a),\\
&&A\cong \mathbb{C}[x,y]/( x^2, xy, y^2).
\end{eqnarray*}
The vector space $(a, b_0a )$ is the unique maximal ideal of $A$. Also, for $u\in (a,b_0a)$, $w\in B/A\partial(A)$, $w_0u\in (a, b_0a)$ and $u\cdot w=0$.
\end{thm}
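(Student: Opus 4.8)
The plan is to substitute the hypothesis into Lemma~\ref{dim3case} and then to determine the one remaining scalar $\chi$. Comparing the assumed relation with \eqref{3relb0b0a} identifies the coefficients $c_0 = 1$ and $c_1 = -\left(\frac{\alpha+1}{\sqrt{\alpha}}\right)i$. Feeding these into the three constraints at the end of Lemma~\ref{dim3case}, the equation $(\gamma_0+(\gamma_1+1)c_1)c_0 = 0$ with $c_0=1$ forces $\gamma_0+(\gamma_1+1)c_1 = 0$ (the first displayed identity), and the constraint $\beta = (\gamma_1+1)c_0+(\gamma_0+(\gamma_1+1)c_1)c_1$ then collapses to $\beta = \gamma_1+1$. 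Plugging $\beta$, $c_0$, $c_1$ and $\gamma_0+(\gamma_1+1)c_1=0$ into the Lemma's formulas for $a*(b_0a)$ and $a\cdot\partial(a)$ and into \eqref{3relab} yields the preamble identities $a*(b_0a)=(\gamma_1+1)b_0a$, $a\cdot\partial(a)=(\gamma_1+1)\partial(a)$ and the stated expression for $a\cdot b$ with no further computation.

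Next I would extract $(b_0a)\cdot b$ from the derivation axiom $b_0(a\cdot v)-a\cdot(b_0 v)=(b_0a)\cdot v$ taken at $v=b$. Using $b_0b=\tfrac12\partial(a)$, $b_0\partial(a)=\partial(b_0a)$ and the hypothesis $b_0\partial(b_0a)=\partial(a)-\left(\frac{\alpha+1}{\sqrt{\alpha}}\right)i\,\partial(b_0a)$, together with the expressions already found for $a\cdot b$ and $a\cdot\partial(a)$, the two sides expand in the basis $\{\partial(a),\partial(b_0a)\}$, and collecting coefficients gives $(b_0a)\cdot b=\gamma_1\partial(a)+\left(\frac{\alpha+1}{\sqrt{\alpha}}\right)i\,\partial(b_0a)$.

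The heart of the argument is pinning down $\chi$, and this is where I expect the real work to lie, since $\chi$ no longer appears in any constraint of Lemma~\ref{dim3case} once $\gamma_0+(\gamma_1+1)c_1=0$. I would first compute the auxiliary products $a\cdot\partial(b_0a)$ and $(b_0a)\cdot\partial(a)$. Writing $a\cdot\partial(b_0a)=p\,\partial(a)+q\,\partial(b_0a)$ and applying the derivation axiom at $v=\partial(b_0a)$, $a'=a$, the right-hand side $(b_0a)\cdot\partial(b_0a)$ vanishes by Lemma~\ref{dim3case}, so $b_0(a\cdot\partial(b_0a))=a\cdot b_0\partial(b_0a)$; expanding both sides and matching coefficients forces $p=0$, $q=\gamma_1+1$, i.e. $a\cdot\partial(b_0a)=(\gamma_1+1)\partial(b_0a)$. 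The Leibniz rule $\partial(a*(b_0a))=a\cdot\partial(b_0a)+(b_0a)\cdot\partial(a)$ then gives $(b_0a)\cdot\partial(a)=0$. Now the associativity axiom with $a'=a$, $a''=b_0a$, $u=b$ reads $a\cdot((b_0a)\cdot b)-(a*(b_0a))\cdot b=(b_0a)\cdot\partial(b_0a)+(b_0b_0a)\cdot\partial(a)$; the left-hand side cancels to $0$, while the right-hand side equals $(\chi+\gamma_1+1)\partial(a)$ after inserting $b_0b_0a=\chi 1_A+a-\left(\frac{\alpha+1}{\sqrt{\alpha}}\right)ib_0a$. Since $\partial(a)\neq 0$ this yields $\chi=-(\gamma_1+1)$.

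Finally I would substitute $\chi=-(\gamma_1+1)$ into the Lemma's expressions to obtain $b_0b_0a$ and $a*a=-(\gamma_1+1)^2 1_A+2(\gamma_1+1)a$, and split on $\gamma_1+1$. In case (i), setting $X=a-(\gamma_1+1)1_A$ and $Y=b_0a$, one checks directly from the three products $a*a$, $a*(b_0a)$, $(b_0a)*(b_0a)$ that $X^2=XY=Y^2=0$, identifying $A$ with $\mathbb{C}[x,y]/((x-(\gamma_1+1))^2,(x-(\gamma_1+1))y,y^2)$, whose unique maximal ideal is $\mathrm{Span}\{X,Y\}$. In case (ii) the same relations with $\gamma_1+1=0$ make $a*a$, $a*(b_0a)$ and $(b_0a)^2$ all vanish, giving $A\cong\mathbb{C}[x,y]/(x^2,xy,y^2)$ with maximal ideal $(a,b_0a)$ and $a\cdot b=-\partial(b_0a)$. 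The closing module assertions reduce, since $B/A\partial(A)$ is spanned by $b+A\partial(A)$, to checking on the generators of the maximal ideal that $b_0$ applied to each generator stays in the ideal (using the computed $b_0b_0a$) and that their products with $b$ lie in $A\partial(A)$ (using $a\cdot b,(b_0a)\cdot b\in A\partial(A)$).
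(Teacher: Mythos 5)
Your proposal is correct and follows essentially the same route as the paper: specialize Lemma \ref{dim3case} to $c_0=1$, $c_1=-\frac{\alpha+1}{\sqrt{\alpha}}i$, read off $\beta=\gamma_1+1$ and the preamble identities, extract $(b_0a)\cdot b$ from the derivation axiom at $v=b$, and pin down $\chi$ via the associativity axiom with $a'=a$, $a''=b_0a$, $u=b$. The only (harmless) variation is that you first compute $a\cdot\partial(b_0a)=(\gamma_1+1)\partial(b_0a)$ and $(b_0a)\cdot\partial(a)=0$ individually, which makes the left-hand side of that identity vanish and yields $\chi=-(\gamma_1+1)$ uniformly, whereas the paper works only with the sum $a\cdot\partial(b_0a)+(b_0a)\cdot\partial(a)=\partial(a*(b_0a))$ and therefore already needs the case split on $\gamma_1+1$ at that stage.
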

\begin{proof} When we set $c_0=1$, $c_1=-\left(\frac{\alpha+1}{\sqrt{\alpha}}\right)i $ in Lemma \ref{dim3case}, we have 
\begin{eqnarray*}
&&(\gamma_0+(\gamma_1+1)(-\left(\frac{\alpha+1}{\sqrt{\alpha}}\right)i  ))\chi=0,\\
&&(\gamma_0+(\gamma_1+1)(-\left(\frac{\alpha+1}{\sqrt{\alpha}}\right)i  ))=0,\\
&&\beta=\gamma_1+1+(\gamma_0+(\gamma_1+1)( -\left(\frac{\alpha+1}{\sqrt{\alpha}}\right)i ))(-\left(\frac{\alpha+1}{\sqrt{\alpha}}\right)i ).
\end{eqnarray*}
These imply that 
\begin{eqnarray*}
&&\beta=\gamma_1+1,\\
&&a*(b_0a)=(\gamma_1+1)b_0a,\\
&&a*a=(\gamma_1+1)\chi 1_A+2(\gamma_1+1)a,\\
&&a\cdot \partial(a)=(\gamma_1+1)\partial(a).
\end{eqnarray*}
Moreover, $A\cong \mathbb{C}[x,y]/( x^2-2(\gamma_1+1)x-(\gamma_1+1)\chi, xy-(\gamma_1+1)y, y^2)$.

Since $\beta=\gamma_1+1$, $\gamma_0=(\gamma_1+1)\left(\frac{\alpha+1}{\sqrt{\alpha}}\right) i$, we have that
$$a\cdot b=(\gamma_1+1) b+(\gamma_1+1)\left(\frac{\alpha+1}{\sqrt{\alpha}}\right) i\partial(a)+\gamma_1\partial(b_0a). $$ 
Because
$b_0(a\cdot b)-a\cdot (b_0b)=(b_0a)\cdot b$, and $a\cdot \partial(a)=(\gamma_1+1)\partial(a)$, we have
\begin{eqnarray*}
&&(b_0a)\cdot b\\
&&=b_0((\gamma_1+1) b+(\gamma_1+1)\left(\frac{\alpha+1}{\sqrt{\alpha}}\right) i\partial(a)+\gamma_1\partial(b_0a) )-\frac{1}{2}a\cdot \partial (a)\\
&&=b_0((\gamma_1+1) b+(\gamma_1+1)\left(\frac{\alpha+1}{\sqrt{\alpha}}\right) i\partial(a)+\gamma_1\partial(b_0a) )-\frac{1}{2}(\gamma_1+1)\partial(a)\\
&&=\gamma_1\partial(a)+\frac{\alpha+1}{\sqrt{\alpha}}i\partial(b_0a).
\end{eqnarray*} 
Since $c_0=1$, and $c_1=-\frac{\alpha+1}{\sqrt{\alpha}}i$, we have 
$b_0b_0a=\chi 1_A+a-\frac{\alpha+1}{\sqrt{\alpha}}i b_0a$. Because $$a\cdot (b_0a\cdot b)-(a*b_0a)\cdot b=(b_0a)\cdot \partial(b_0a)+(b_0b_0a)\cdot \partial(a),$$
$a*(b_0a)=(\gamma_1+1)b_0a$, and $(b_0a)\cdot \partial(b_0a)=0$, we have 
$$a\cdot (\gamma_1\partial(a)+\left(\frac{\alpha+1}{\sqrt{\alpha}}\right) i\partial(b_0a))-((\gamma_1+1)b_0a)\cdot b=(\chi 1_A+a-\frac{(\alpha+1)}{\sqrt{\alpha}}i b_0a)\cdot \partial(a).$$
This implies that 
\begin{eqnarray*}
&&\chi \partial(a)\\
&&=(\gamma_1-1)a\cdot \partial(a)+\left(\frac{\alpha+1}{\sqrt{\alpha}}\right) i (a\cdot \partial(b_0a)+(b_0a)\cdot \partial(a))-(\gamma_1+1)(b_0a)\cdot b\\
&&=(\gamma_1-1)(\gamma_1+1) \partial(a)+\left(\frac{\alpha+1}{\sqrt{\alpha}}\right) i \partial(a*(b_0a))-(\gamma_1+1)(b_0a)\cdot b\\
&&=(\gamma_1^2-1)\partial(a)+\left(\frac{\alpha+1}{\sqrt{\alpha}}\right) i \partial((\gamma_1+1)b_0a)-(\gamma_1+1)(b_0a)\cdot b\\
\end{eqnarray*}
The above statement is equivalent to the following:
$$(\gamma_1^2-\chi-1)\partial(a)+\left(\frac{\alpha+1 }{\sqrt{\alpha}}\right) i(\gamma_1+1)\partial(b_0a)=(\gamma_1+1)(b_0a)\cdot b.$$

If $(\gamma_1+1)\neq 0$ then $(b_0a)\cdot b=\frac{\gamma_1^2-1-\chi}{\gamma_1+1}\partial(a)+\left(\frac{\alpha+1}{\sqrt{\alpha}}\right)i\partial(b_0a).$
Since 
\begin{eqnarray*}
&&(b_0a)\cdot b=\gamma_1\partial(a)+\left(\frac{\alpha+1}{\sqrt{\alpha}}\right)i\partial(b_0a)\text{ and }\\
&&(b_0a)\cdot b=\frac{\gamma_1^2-1-\chi}{\gamma_1+1}\partial(a)+\left(\frac{\alpha+1}{\sqrt{\alpha}}\right)i\partial(b_0a),
\end{eqnarray*} we can conclude that 
$\chi=-\gamma_1-1$ and $$b_0b_0a=(-\gamma_1-1 )1_A+a-\left(\frac{\alpha+1}{\sqrt{\alpha}}\right)i b_0a.$$ In addition, $A\cong \mathbb{C}[x,y]/( (x-(\gamma_1+1))^2, (x-(\gamma_1+1))y, y^2)$. The vector space $(a-(\gamma_1+1) ) 1_A )$ is the unique maximal ideal of $A$. In addition, for $u\in (a-( \gamma_1+1_A) 1_A)$, $w\in B/A\partial(A)$, $w_0u\in (a-(\gamma_1+1 ) 1_A)$ and $u\cdot w=0$.

If $\gamma_1+1=0$, then $\chi \partial(a)=0$. This implies that $\chi=0$ and $b_0b_0a=a-\left(\frac{\alpha+1}{\sqrt{\alpha}}\right)i b_0a.$ In addition, we have $\beta=0$, $a*b_0a=0$, $a*a=0$, $a\cdot\partial(a)=0$, $\gamma_0=0$ and $a\cdot b=-\partial(b_0a)$. The unital commutativ associative algebra $A\cong \mathbb{C}[x,y]/( x^2, xy, y^2)$. The vector space $(a, b_0a )$ is the unique maximal ideal of $A$. Also, for $u\in (a,b_0a)$, $w\in B/A\partial(A)$, $w_0u\in (a, b_0a)$ and $u\cdot w=0$.
\end{proof}
%%%%%%%%%%%%%%%%%%%%%%%%%%%%%%%%%%%%%%%%%%%%%%%%%%%%%%%%%%%%%
\begin{thm}\label{3solvablecase2} Let $A$ be a unital commutative associative algebra with the identity $1_A$. Let $B$ be a vertex $A$-algebroid that has properties as in Lemma \ref{dim3case}. 

Assume that 
$b_0\partial(b_0a)=\partial(a)+2i \partial(b_0a).$ Then the following statments hold:
\begin{eqnarray*}
&&\gamma_0+(\gamma_1+1)2i=0,~\beta=\gamma_1+1,\\
&&a*(b_0a)=(\gamma_1+1)b_0a,~a*a=-(\gamma_1+1)^21_A+2(\gamma_1+1)a,\\
&&a\cdot \partial(a)=(\gamma_1+1)\partial(a),~(b_0a)\cdot b=\gamma_1\partial(a)-2i\partial(b_0a).
\end{eqnarray*} In addition, $A\cong \mathbb{C}[x,y]/( (x-(\gamma_1+1))^2, xy-(\gamma_1+1)y, y^2).$
The vector space $(a-(\gamma_1+1 ) 1_A, b_0a )$ is the unique maximal ideal of $A$. In addition, for $u\in (a-( \gamma_1+1) 1_A, b_0a)$, $w\in B/A\partial(A)$, $w_0u\in (a-(\gamma_1+1 )1_A)$ and $u\cdot w=0$.
\end{thm}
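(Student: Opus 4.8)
The plan is to run the same argument as in Theorem \ref{3solvablecase1}, specializing Lemma \ref{dim3case} to $c_0=1$, $c_1=2i$ and then pinning down the two scalars that the lemma leaves free, namely $\gamma_0$ and $\chi$. First I substitute $c_0=1$, $c_1=2i$ into the three constraint equations at the end of Lemma \ref{dim3case}. The equation $(\gamma_0+(\gamma_1+1)c_1)c_0=0$ forces $\gamma_0+(\gamma_1+1)2i=0$, and then the equation for $\beta$ collapses to $\beta=\gamma_1+1$. Feeding these two facts back into the lemma's formulas yields directly $a*(b_0a)=(\gamma_1+1)b_0a$, $a\cdot\partial(a)=(\gamma_1+1)\partial(a)$, $a\cdot b=(\gamma_1+1)b-2i(\gamma_1+1)\partial(a)+\gamma_1\partial(b_0a)$, $b_0b_0a=\chi 1_A+a+2ib_0a$, and $a*a=(\gamma_1+1)\chi 1_A+2(\gamma_1+1)a$; only $\chi$ remains unknown.

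Next I compute $(b_0a)\cdot b$ from the derivation relation $b_0(a\cdot b)-a\cdot(b_0b)=(b_0a)\cdot b$, using $b_0b=\tfrac12\partial(a)$, $b_0\partial(a)=\partial(b_0a)$, $b_0\partial(b_0a)=\partial(a)+2i\partial(b_0a)$, and the value of $a\cdot\partial(a)$ just found; the $\partial(a)$- and $\partial(b_0a)$-coefficients simplify to $(b_0a)\cdot b=\gamma_1\partial(a)-2i\partial(b_0a)$. The crux is then determining $\chi$, which I obtain from the associativity identity $a\cdot((b_0a)\cdot b)-(a*(b_0a))\cdot b=(b_0a)\cdot\partial(b_0a)+(b_0b_0a)\cdot\partial(a)$. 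Its left-hand side contains the a-priori-unknown product $a\cdot\partial(b_0a)$, which I do not compute directly; instead I invoke the $\partial$-Leibniz identity $\partial(a*a')=a\cdot\partial(a')+a'\cdot\partial(a)$ with $a'=b_0a$ to get $a\cdot\partial(b_0a)+(b_0a)\cdot\partial(a)=(\gamma_1+1)\partial(b_0a)$. Substituting this together with $(b_0a)\cdot\partial(b_0a)=0$ makes every $\partial(b_0a)$-term and every $(b_0a)\cdot\partial(a)$-term cancel, leaving $(\chi+\gamma_1+1)\partial(a)=0$; since $\partial(a)\neq0$ this gives $\chi=-(\gamma_1+1)$, hence $a*a=-(\gamma_1+1)^2 1_A+2(\gamma_1+1)a$.

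With the full multiplication table of $A$ on the basis $\{1_A,a,b_0a\}$ now in hand, I identify $A$ by sending $x\mapsto a$, $y\mapsto b_0a$ and checking that the three generators $(x-(\gamma_1+1))^2$, $xy-(\gamma_1+1)y$, $y^2$ of the ideal map to $0$; a dimension count (both sides are three-dimensional) upgrades this to the isomorphism $A\cong\mathbb{C}[x,y]/((x-(\gamma_1+1))^2,xy-(\gamma_1+1)y,y^2)$. Writing $x'=x-(\gamma_1+1)$ turns the relations into $(x')^2=x'y=y^2=0$, exhibiting $A$ as a local algebra whose unique maximal ideal is $(a-(\gamma_1+1)1_A,b_0a)$.

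Finally, since $A\partial(A)=\partial(A)=\mathrm{Span}\{\partial(a),\partial(b_0a)\}$, the Lie $A$-algebroid $B/A\partial(A)$ is one-dimensional and spanned by $b+A\partial(A)$, so it suffices to test $w=b+A\partial(A)$ on the generators of the maximal ideal. For $u\cdot w=0$ I use that $(a-(\gamma_1+1)1_A)\cdot b=-2i(\gamma_1+1)\partial(a)+\gamma_1\partial(b_0a)$ and $(b_0a)\cdot b$ both lie in $A\partial(A)$. For $w_0u$ I use $b_01_A=0$ (so that $b_0(a-(\gamma_1+1)1_A)=b_0a$) together with $b_0b_0a=(a-(\gamma_1+1)1_A)+2ib_0a$, which keep $w_0u$ inside the maximal ideal. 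The one genuinely delicate step is the determination of $\chi$: all the other identities are direct substitutions into the axioms, but $\chi$ can only be isolated once the unknown scalar $a\cdot\partial(b_0a)$ is eliminated via the $\partial$-Leibniz identity, and the argument hinges on the resulting exact cancellation of the $\partial(b_0a)$-terms.
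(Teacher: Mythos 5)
Your proposal is correct and follows essentially the same route as the paper's proof: specialize Lemma \ref{dim3case} to $c_0=1$, $c_1=2i$, compute $(b_0a)\cdot b$ from the derivation relation, and then isolate $\chi$ from the associativity identity by using $\partial(a*(b_0a))=a\cdot\partial(b_0a)+(b_0a)\cdot\partial(a)$ to eliminate the unknown term $a\cdot\partial(b_0a)$. The only difference is that you spell out the identification of $A$ with the quotient polynomial ring and the maximal-ideal checks, which the paper leaves as ``the rest follows immediately.''
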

\begin{proof} In Lemma \ref{dim3case}, when we set $c_0=1$, and $c_1=2i$, the following statements hold
\begin{eqnarray*}
&&\gamma_0=-(\gamma_1+1)2i,~\beta=\gamma_1+1,\\
&&a*(b_0a)=(\gamma_1+1)b_0a,~a*a=(\gamma_1+1)\chi 1_A+2(\gamma_1+1)a,\\
&&a\cdot \partial(a)=(\gamma_1+1)\partial(a),~a\cdot b=(\gamma_1+1)b-(\gamma_1+1)2i\partial(a)+\gamma_1\partial(b_0a).
\end{eqnarray*}
Since $(b_0a)\cdot b=b_0(a\cdot b)-a\cdot (b_0b)$, we have 
\begin{equation}(b_0a)\cdot b=\gamma_1\partial(a)-2i\partial(b_0a).\end{equation}
Because
\begin{eqnarray*}
&&b_0b_0a=\chi 1_A+a+2i b_0a,~a\cdot ((b_0a)\cdot b)-(a*(b_0a))\cdot b=(b_0a)\cdot \partial(b_0a)+(b_0b_0a)\cdot \partial(a),\\
&&a*(b_0a)=(\gamma_1+1)b_0a,~(\gamma_1+1)\partial(b_0a)=\partial(a*b_0a)=a\cdot\partial(b_0a)+(b_0a)\cdot \partial(a),
\end{eqnarray*}
we have 
\begin{eqnarray*}
&&a\cdot (\gamma_1\partial(a)-2i\partial(b_0a))-(\gamma_1+1)(b_0a)\cdot b=(\chi 1_A+a+2i b_0a)\cdot \partial(a).
\end{eqnarray*} 
Equivalently, 
$(\chi+\gamma_1+1)\partial(a)=0.$ Therefore, $\chi=-(\gamma_1+1)$, and $$a*a=-(\gamma_1+1)^21_A+2(\gamma_1+1)a.$$ The rest follows immediately.
\end{proof}
%%%%%%%%%%%%%%%%%%%%%%%%%%%%%%%%%%%%%%%%%%%%%%%%%%%%%%%%%%%%%
\begin{thm}\label{3solvablecase3} 
Let $A$ be a unital commutative associative algebra with the identity $1_A$. Let $B$ be a vertex $A$-algebroid that has properties as in Lemma \ref{dim3case}.

Assume that 
$b_0\partial(b_0a)= \partial(b_0a).$ Then the following statments hold:
\begin{eqnarray*}
&&\chi=0,~\beta=\gamma_0+\gamma_1+1,\\
&&a*(b_0a)=\beta b_0a,~a*a=\beta a+\beta b_0a,\\
&&a\cdot \partial(a)=\frac{1}{2}(\beta\partial(a)+\frac{1}{2}\beta \partial(b_0a)).
\end{eqnarray*}
\begin{enumerate}
\item If $\gamma_0+\gamma_1+1=0$, then 
\begin{eqnarray*}
&&a\cdot b=(-1-\gamma_1)\partial(a)+\gamma_1\partial(b_0a),~b_0b_0a=b_0a,\\
&&a*(b_0a)=0,~a*a=0,~a\cdot \partial(a)=0,~(b_0a)\cdot b=-\partial(b_0a),\\
&&A\cong \mathbb{C}[x,y]/( x^2, y^2,xy).
\end{eqnarray*}
The vector space $(a, b_0a )$ is the unique maximal ideal of $A$. For $u\in (a,b_0a)$, $w\in B/A\partial(A)$, $w_0u\in (a,b_0a)$ and $u\cdot w=0$.

\item If $\gamma_0+\gamma_1+1\neq 0$, then 
\begin{eqnarray*}
&&b_0b_0a=b_0a,~ a*a=(\gamma_0+\gamma_1+1)(a+b_0a),\text{ and }\\
&&A\cong \mathbb{C}[x,y]/( x^2-(\gamma_0+\gamma_1+1)(x+y), xy-(\gamma_0+\gamma_1+1)y, y^2).
\end{eqnarray*} 
The vector space $(a-(\gamma_0+\gamma_1+1) 1_A, b_0a )$ is the unique maximal ideal of $A$. In addition, for $u\in (a-(\gamma_0+\gamma_1+1 ) 1_A, b_0a)$, $w\in B/A\partial(A)$, $w_0u\in (a-(\gamma_0+\gamma_1+1 ) 1_A,b_0a)$ and $u\cdot w=0$.
\end{enumerate}

\end{thm}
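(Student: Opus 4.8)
The plan is to specialize Lemma~\ref{dim3case} to the present hypothesis $b_0\partial(b_0a)=\partial(b_0a)$, i.e.\ to the values $c_0=0$ and $c_1=1$, and then to read off the structure of $A$. Substituting these values into the conclusions of the lemma gives at once $\beta=\gamma_0+\gamma_1+1$, $b_0b_0a=\chi 1_A+b_0a$, $a*(b_0a)=\beta b_0a$, $(b_0a)*(b_0a)=0$, $a*a=(\gamma_1+1)\chi 1_A+\beta a+\beta b_0a$, and $a\cdot\partial(a)=\tfrac12\beta(\partial(a)+\partial(b_0a))$; moreover the three numerical constraints of the lemma collapse to the single relation $\beta\chi=0$ (the other two becoming automatic once $c_0=0$). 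Thus everything reduces to pinning down $\chi$ and then recording the multiplication table of $A$ on the basis $\{1_A,a,b_0a\}$.

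The key step is to prove that $\chi=0$. First I would compute $(b_0a)\cdot b$ from the derivation identity $b_0(a\cdot b)-a\cdot(b_0b)=(b_0a)\cdot b$, using $b_0b=\tfrac12\partial(a)$, $b_0\partial(a)=\partial(b_0a)$, $b_0\partial(b_0a)=\partial(b_0a)$, and the value of $a\cdot\partial(a)$ above; this yields $(b_0a)\cdot b=\tfrac{\beta}{4}\partial(a)+(\tfrac{3\beta}{4}-1)\partial(b_0a)$. If $\beta\neq0$, then $\chi=0$ is immediate from $\beta\chi=0$. If $\beta=0$, I would feed the previous computation into the associativity identity $a\cdot((b_0a)\cdot b)-(a*(b_0a))\cdot b=(b_0a)\cdot\partial(b_0a)+(b_0b_0a)\cdot\partial(a)$ (the vertex $A$-algebroid relation with $a'=a$, $a''=b_0a$, $u=b$). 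Since $a*(b_0a)=0$, $(b_0a)\cdot\partial(b_0a)=0$, and $(b_0a)\cdot b=-\partial(b_0a)$ in this case, the left side is $-a\cdot\partial(b_0a)$ while the right side is $\chi\partial(a)+(b_0a)\cdot\partial(a)$; applying $\partial$ to $a*(b_0a)=\beta b_0a=0$ gives $a\cdot\partial(b_0a)+(b_0a)\cdot\partial(a)=0$, so the two unknown terms cancel and we are left with $\chi\partial(a)=0$, whence $\chi=0$ because $\partial(a)\neq0$. This $\beta=0$ subcase, where $\beta\chi=0$ is vacuous, is the point I expect to be the main obstacle.

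With $\chi=0$ in hand the remaining assertions are bookkeeping: $b_0b_0a=b_0a$, $a*a=\beta(a+b_0a)$, and $a\cdot b=\beta b+\gamma_0\partial(a)+\gamma_1\partial(b_0a)$, together with the further collapses $a*a=a*(b_0a)=0$, $a\cdot\partial(a)=0$, $(b_0a)\cdot b=-\partial(b_0a)$, and $a\cdot b=(-1-\gamma_1)\partial(a)+\gamma_1\partial(b_0a)$ in the degenerate case $\beta=\gamma_0+\gamma_1+1=0$. To identify $A$ I would present it by generators $x=a$, $y=b_0a$ with the relations just computed, namely $x^2=\beta(x+y)$, $xy=\beta y$, $y^2=0$, which for $\beta=0$ is $\mathbb{C}[x,y]/(x^2,xy,y^2)$ and for $\beta\neq0$ is the stated quotient. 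The final task --- determining the maximal ideal(s) of $A$ and the local structure in each case (equivalently, analysing the idempotents of $A$, which is the delicate point when $\beta\neq0$), and then checking that $w_0u$ and $u\cdot w$ remain inside the relevant ideal for $u$ in the maximal ideal and $w\in B/A\partial(A)$ --- would be carried out directly from the product formulas, using $(b_0a)\cdot b\in A\partial(A)$, $a\cdot b\equiv\beta b\pmod{A\partial(A)}$, and $b_0(a-\beta1_A),\,b_0(b_0a)\in(a-\beta1_A,b_0a)$.
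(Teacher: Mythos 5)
Your proposal is correct and follows essentially the same route as the paper: specialize Lemma \ref{dim3case} to $c_0=0$, $c_1=1$ so the constraints reduce to $\beta=\gamma_0+\gamma_1+1$ and $\beta\chi=0$, compute $(b_0a)\cdot b=\tfrac{\beta}{4}\partial(a)+(\tfrac{3\beta}{4}-1)\partial(b_0a)$ from the derivation identity, and in the $\beta=0$ subcase extract $\chi\partial(a)=0$ from the associativity identity $a\cdot((b_0a)\cdot b)-(a*(b_0a))\cdot b=(b_0a)\cdot\partial(b_0a)+(b_0b_0a)\cdot\partial(a)$ together with $\partial(a*(b_0a))=a\cdot\partial(b_0a)+(b_0a)\cdot\partial(a)=0$, which is exactly the paper's argument. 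The only minor remark is that your value $a\cdot\partial(a)=\tfrac12\beta(\partial(a)+\partial(b_0a))$ is the one that actually follows from the lemma (the coefficient $\tfrac12\beta$ on $\partial(b_0a)$ in the theorem's displayed statement appears to be a typo).
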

\begin{proof} If we set $c_0=0$, $c_1=1$ in Lemma \ref{dim3case}, we then have that 
\begin{eqnarray*}
&&(\gamma_0+(\gamma_1+1))\chi=0,~\beta=\gamma_0+\gamma_1+1,\\
&&a*a=(\gamma_1+1)\chi 1_A+(\gamma_0+\gamma_1+1)a+(\gamma_0+\gamma_1+1)b_0a,\\
&&a\cdot \partial(a)=\frac{1}{2}((\gamma_0+\gamma_1+1)\partial(a)+(\gamma_0+\gamma_1+1)\partial(b_0a)).
\end{eqnarray*}
Because $(b_0a)\cdot b=b_0(a\cdot b)-a\cdot (b_0b)$, we have
\begin{eqnarray*}
&&(b_0a)\cdot b\\
&&=b_0(\beta b+\gamma_0\partial(a)+\gamma_1\partial(b_0a))-\frac{1}{2}a\cdot \partial(a)\\
&&=\frac{1}{2}\beta \partial(a)+\gamma_0\partial(b_0a)+\gamma_1\partial(b_0a)-\frac{1}{2}(\frac{1}{2}\beta\partial(a)+\frac{1}{2}\beta\partial(b_0a))\\
&&=\frac{1}{4}\beta\partial(a)+(\gamma_0+\gamma_1-\frac{1}{4}\beta)\partial(b_0a)\\
&&=\frac{1}{4}\beta\partial(a)+(\frac{3}{4}\beta-1)\partial(b_0a). 
\end{eqnarray*}

Now, assume that $\gamma_0+\gamma_1+1=0$. For this case, we have
\begin{eqnarray*}
&&\beta=0,~b_0b_0a=\chi 1_A+b_0a,~a*b_0a=\beta b_0a=0,\\
&&(b_0a)\cdot \partial(b_0a)=0,~a\cdot \partial(a)=0,~(b_0a)\cdot b=-\partial(b_0a).
\end{eqnarray*}
Recall that $ a\cdot(b_0a\cdot b)-(a*b_0a)\cdot b=(b_0a)\cdot \partial(b_0a)+(b_0b_0a)\cdot \partial(a)$. Since
$$a\cdot ((b_0a)\cdot b)-(a*(b_0a))\cdot b=a\cdot (-\partial(b_0a))$$ and 
$(b_0a)\cdot \partial(b_0a)+(b_0b_0a)\cdot \partial(a)=(\chi 1_A+b_0a)\cdot \partial(a)$, we can conclude that 
$$\chi\partial(a)+(b_0a)\cdot \partial(a)=-a\cdot \partial(b_0a).$$ Equivalently, we have $\chi \partial(a)+\partial(a*b_0a)=0$. Since $a*b_0a=0$, we then have that $\chi \partial(a)=0$. This implies that $\chi=0$, and $a*a=0$. The unital commutative associative algebra $A\cong \mathbb{C}[x,y]/( x^2, y^2,xy)$. The vector space $(a, b_0a )$ is the unique maximal ideal of $A$. For $u\in (a,b_0a)$, $w\in B/A\partial(A)$, $w_0u\in (a,b_0a)$ and $u\cdot w=0$.

Next, we assume that $\gamma_0+\gamma_1+1\neq 0$. Then $\chi=0$, $b_0b_0a=b_0a$, and $$a*a=(\gamma_0+\gamma_1+1)(a+b_0a).$$ The unital commutative associative algebra $A\cong \mathbb{C}[x,y]/( x^2-(\gamma_0+\gamma_1+1)(x+y), xy-(\gamma_0+\gamma_1+1)y, y^2)$. The vector space $(a-(\gamma_0+\gamma_1+1) 1_A, b_0a )$ is the unique maximal ideal of $A$. In addition, for $u\in (a-(\gamma_0+\gamma_1+1 ) 1_A, b_0a)$, $w\in B/A\partial(A)$, $w_0u\in (a-(\gamma_0+\gamma_1+1 ) 1_A,b_0a)$ and $u\cdot w=0$.

\end{proof}                                                                                                                  
%%%%%%%%%%%%%%%%%%%%%%%%%%%%%%%%%%%%%%%%%%%%%%%%%%%%%%%%%%%%%%%%%

\section{On vertex algebras associated to non-Lie cyclic left Leibniz algebras}

For this section, we discuss representation theory of vertex algebras $V_B$ associated to vertex $A$-algebroids $B$ when $B$ are non-Lie cyclic left Leibniz algebras, and $2\leq \dim B\leq 3$ from various aspects. Precisely, in subsection \ref{generaltheory}, we review a general construction of a vertex algebra $V_B$ associated to an arbitrary vertex $A$-algebroid $B$ and its modules. In subsection \ref{moduletheory} and subsection \ref{relationtoHeisenberg}, we focus our attention on vertex algebras $V_B$ when $B$ are vertex $A$-algebroids that are non-Lie cyclic left Leibniz algebras, and $2\leq \dim B\leq 3$. Also, we discuss representation theory of vertex algebras $V_B$ associated to vertex $A$-algebroids $B$ from representation theory of Lie algebroid point of view, and describe relations between specific types of vertex algebras $V_B$, and a rank one Heisenberg vertex algebra. 

\begin{prop}\cite{JY}Let $V=\oplus_{n=0}^{\infty}V_{(n)}$ be a $\mathbb{N}$-graded vertex algebra such that $V_{(0)}$ is a finite dimensional unital commutative associative algebra and $\dim V_{(0)}\geq 2$. If $V_{(0)}$ is a local algebra then $V$ is indecomposable. 
\end{prop}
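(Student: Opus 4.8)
The plan is to show that any decomposition $V = U \oplus W$ as vertex algebras forces one summand to be trivial, by examining how the decomposition interacts with the degree-zero piece $V_{(0)}$. First I would recall that in a direct sum of vertex algebras $V = U \oplus W$, the vacuum vector $\mathbf{1}$ splits as $\mathbf{1} = e_U + e_W$, where $e_U$ and $e_W$ are the vacuum vectors (identities) of $U$ and $W$ respectively. Since the vacuum lives in degree $0$ and the decomposition must respect the $\mathbb{N}$-grading, we have $e_U, e_W \in V_{(0)}$. The key structural fact is that $e_U$ and $e_W$ are mutually orthogonal idempotents in the commutative associative algebra $(V_{(0)}, \ast)$, where $a \ast a' = a_{-1} a'$ is the degree-zero product, and they sum to the identity $1_A = \mathbf{1}$.

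The central step is to invoke the hypothesis that $V_{(0)}$ is a local algebra. A finite-dimensional commutative associative algebra is local precisely when it has no nontrivial idempotents: its only idempotents are $0$ and $1_A$. Therefore from $e_U + e_W = 1_A$ together with $e_U \ast e_U = e_U$, $e_W \ast e_W = e_W$, and $e_U \ast e_W = 0$, I would conclude that one of $e_U, e_W$ must be $0$ and the other must be $1_A$. Without loss of generality say $e_W = 0$ and $e_U = 1_A$.

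Finally I would argue that $e_W = 0$ forces $W = 0$ as a vertex algebra. Since $e_W$ is the identity of the vertex subalgebra $W$ under its own vacuum, every element $w \in W$ satisfies $w = (e_W)_{-1} w = e_{W,-1} w$; with $e_W = 0$ this gives $w = 0$. Hence $W$ is the zero vertex algebra and $V = U$, so $V$ admits no nontrivial decomposition and is indecomposable.

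The main obstacle is making the first step fully rigorous: I must justify carefully that a vertex-algebra direct sum decomposition induces orthogonal idempotents in $(V_{(0)}, \ast)$ that are graded (each lying in $V_{(0)}$), and that the vacuum of each summand acts as the identity for the $_{-1}$-product on that summand. This requires unwinding the definition of a direct sum of $\mathbb{N}$-graded vertex algebras and checking that the vacuum splits as claimed and that the degree-$0$ product $\ast$ restricted to the idempotents behaves as in an ordinary commutative associative algebra; the remaining idempotent and module-theoretic arguments are then routine given the locality hypothesis.
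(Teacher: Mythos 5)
The paper does not prove this proposition; it is quoted from \cite{JY} without proof, so there is no internal argument to compare against. Your proposal is the standard (and, in \cite{JY}, the actual) argument: a decomposition $V=U\oplus W$ into nonzero ideals splits the vacuum as $\mathbf{1}=e_U+e_W$ with $e_U,e_W$ orthogonal idempotents for the product $a*a'=a_{-1}a'$ on $V_{(0)}$, and a finite-dimensional local commutative algebra has no idempotents other than $0$ and $1_A$, forcing one summand to vanish. The only point to nail down is the one you already flagged: one should either take the decomposition to be by graded ideals (so $e_U,e_W\in V_{(0)}$ automatically), or pass to the degree-zero components of $e_U,e_W$, which are still orthogonal idempotents summing to $1_A$ because $u_{-1}v$ is homogeneous of degree $\deg u+\deg v$; with that remark the proof is complete.
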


\begin{prop}\label{indecomposableVB}\cite{JY} Let $V=\oplus_{n=0}^{\infty}V_{(n)}$ be a $\mathbb{N}$-graded vertex algebra that satisfies the following properties:

\vspace{0.2cm}

\noindent (a) $2\leq \dim V_{(0)}<\infty$, $1\leq dim V_{(1)}<\infty$, $V$ is generated by $V_{(0)}$ and $V_{(1)}$;

\vspace{0.2cm}

\noindent (b) $V_{(0)}$ is a local algebra.

\vspace{0.2cm}
\noindent If $V_{(0)}$ is not a simple module for a Lie $V_{(0)}$-algebroid $V_{(1)}/{V_{(0)}}D(V_{(0)})$, then $V$ is an indecomposable non-simple vertex algebra. 
\end{prop}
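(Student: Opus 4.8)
The indecomposability of $V$ is immediate: since $V_{(0)}$ is a finite-dimensional local algebra with $\dim V_{(0)}\ge 2$, the preceding proposition applies verbatim and shows $V$ is indecomposable. The entire remaining content is therefore the \emph{non-simplicity}, and my plan is to manufacture an explicit proper nonzero ideal of the vertex algebra $V$ out of the hypothesis on $V_{(0)}$.

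Write $A=V_{(0)}$, $B=V_{(1)}$, and let $\mathfrak g=V_{(1)}/V_{(0)}D(V_{(0)})$ be the Lie $A$-algebroid, which acts on $A$ through the anchor map $\pi$; this action factors through $\mathfrak g$ because $\pi\circ\partial=0$ and $\pi(a\cdot v)=a\pi(v)$. Since $A$ is \emph{not} a simple $\mathfrak g$-module, I would fix a proper nonzero $\mathfrak g$-submodule $I\subsetneq A$. By the definition of a module for a Lie $A$-algebroid, $I$ is simultaneously an ideal of the commutative associative algebra $A$ (that is, $A*I\subseteq I$) and is stable under the derivation action, $\pi(u)(I)\subseteq I$ for all $u\in B$; in particular $1_A\notin I$, since $1_A\in I$ would force $I=A$.

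Next I would let $\mathcal I$ denote the ideal of the vertex algebra $V$ generated by $I\subseteq V_{(0)}$. It is nonzero, so non-simplicity will follow once I show $\mathcal I\ne V$. Because the vacuum $\mathbf 1=1_A$ satisfies $u=u_{-1}\mathbf 1$ for every $u\in V$, an ideal equals $V$ precisely when it contains $\mathbf 1=1_A$; hence it suffices to prove $1_A\notin\mathcal I$, and for this I will establish the sharper claim $\mathcal I\cap V_{(0)}=I$. As $I$ is homogeneous, $\mathcal I$ is an $\N$-graded ideal, so $\mathcal I\cap V_{(0)}=\mathcal I_{(0)}$. The inclusion $I\subseteq\mathcal I_{(0)}$ is clear; moreover $\mathcal I_{(0)}$ is closed under $a\mapsto a_{-1}(\cdot)=a*(\cdot)$ for $a\in V_{(0)}$ and under $u_0(\cdot)=\pi(u)(\cdot)$ for $u\in V_{(1)}$ (both operations preserve degree $0$ and land inside $\mathcal I$), so $\mathcal I_{(0)}$ is itself a $\mathfrak g$-submodule of $A$ containing $I$.

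The heart of the argument, and the step I expect to be the main obstacle, is the reverse inclusion $\mathcal I_{(0)}\subseteq I$: I must rule out that iterated mode operations, which may pass through strictly positive degrees before returning to degree $0$, produce degree-$0$ elements outside $I$. I would prove this by induction on the length of a spanning word $u^1_{n_1}\cdots u^k_{n_k}c$ with $c\in I$ and each $u^j$ homogeneous. Using the commutator formula $[u_m,v_n]=\sum_{i\ge 0}\binom{m}{i}(u_iv)_{m+n-i}$ to move modes inward, together with the $\N$-grading (which annihilates every operator that would lower the degree below $0$) and the explicit low-degree relations of the $1$-truncated conformal algebra ($a_ia'=0$ for $i\ge 0$, $u_0a=\pi(u)(a)$, $u_1v=\langle u,v\rangle$, and $a_{-1}a'=a*a'$), one rewrites each word, modulo shorter words, in terms of the operations $c'\mapsto\pi(w)(c')$ and $c'\mapsto a*c'$ applied to elements $c'\in I$. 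Since $I$ is closed under both $\pi(B)$ and multiplication by $A$, every resulting term lies in $I$ and the induction closes. This yields $\mathcal I_{(0)}=I\subsetneq A$, whence $1_A\notin\mathcal I$; thus $\mathcal I$ is a proper nonzero ideal and $V$ is non-simple.
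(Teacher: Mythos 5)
The paper does not actually prove this proposition: it is imported verbatim from \cite{JY}, so there is no internal argument to compare yours against. That said, your proof is correct and is essentially the standard one (and almost certainly the one in \cite{JY}): indecomposability from the preceding local-algebra proposition, and non-simplicity by promoting a proper nonzero Lie $V_{(0)}$-algebroid submodule $I\subsetneq V_{(0)}$ to a proper nonzero vertex algebra ideal. Your key step, $\mathcal{I}\cap V_{(0)}=I$, is exactly the degree-zero analysis the paper itself relies on elsewhere (Propositions \ref{prop30} and \ref{simplemodulerelations}): since $V$ is generated by $V_{(0)}\oplus V_{(1)}$, ideals are $\mathcal{L}$-submodules, and a PBW decomposition $U(\mathcal{L})=U(\mathcal{L}_-)U(\mathcal{L}_{(0)})U(\mathcal{L}_+)$ reduces your induction to the observations that $\mathcal{L}_+$ kills degree $0$ and that $\mathcal{L}_{(0)}$ acts on $I$ by $a*(\cdot)$ and $\pi(b)(\cdot)$, both of which preserve $I$; this is cleaner than moving modes one commutator at a time, but amounts to the same thing. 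The one detail your sketch glosses over is $D$-stability of the generated ideal: the span of $u^1_{n_1}\cdots u^k_{n_k}c$ with $c\in I$ is not obviously closed under $D$, so you should generate from $\mathbb{C}[D]I$ (as the paper does with $I_B=U(\mathcal{L})\mathbb{C}[D]E$) and then check that positive modes acting on $D^jI$ still land in $I$ --- e.g.\ $b_1\partial(c)=\langle b,\partial(c)\rangle=\pi(b)(c)\in I$ --- which your stated toolkit handles. With that adjustment the argument is complete.
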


\begin{thm}\label{v0v1cyclic} Let $V=\oplus_{n=0}^{\infty}V_{(n)}$ be an $\mathbb{N}$-graded vertex algebra that satisfies the following properties:

\vspace{0.2cm}

\noindent (a) $2\leq \dim V_{(0)}<\infty$, $1\leq dim V_{(1)}<\infty$, $V$ is generated by $V_{(0)}$ and $V_{(1)}$;

\vspace{0.2cm}

\noindent (b) $V_{(1)}$ is a cyclic left Leibniz algebra such that $\dim V_{(1)}$ is either 2 or 3;

\noindent (c) $Ker(D|_{V_{(0)}})=\mathbb{C}{\bf 1}$

\vspace{0.2cm}
\noindent Then $V$ is an indecomposable non-simple $\mathbb{N}$-graded vertex algebra. %In addition, $V/J(V)$ is simple     
\end{thm}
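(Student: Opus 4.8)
The plan is to deduce the theorem from Proposition \ref{indecomposableVB}: its hypothesis (a) is identical to ours, so it suffices to verify its hypothesis (b), that $V_{(0)}$ is a local algebra, together with the non-simplicity condition that $V_{(0)}$ is not a simple module for the Lie $V_{(0)}$-algebroid $V_{(1)}/V_{(0)}D(V_{(0)})$.

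First I would record that $V_{(1)}$ is automatically non-Lie. If a cyclic Leibniz algebra $\langle u\rangle$ were a Lie algebra, then $u^2=[u,u]=0$, hence $u^k=0$ for all $k\geq 2$ and $\dim\langle u\rangle\leq 1$; since $\dim V_{(1)}\in\{2,3\}$ this is impossible. As $V$ is $\mathbb{N}$-graded, $V_{(0)}\oplus V_{(1)}$ carries its standard vertex $V_{(0)}$-algebroid structure, so I may put $A=V_{(0)}$ and $B=V_{(1)}$ and work inside the setting of Section 3, with $D|_{V_{(0)}}$ playing the role of $\partial$ and $V_{(0)}D(V_{(0)})=A\partial(A)$; hypothesis (c) then reads $Ker(\partial)=\mathbb{C}1_A$. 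I would also note $B\neq A\partial(A)$, since $A\partial(A)$ is an abelian (hence Lie) ideal and $B$ is non-Lie.

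Next I would obtain locality: because $B$ is a finite-dimensional non-Lie left Leibniz algebra that is a vertex $A$-algebroid with $Ker(\partial)=\mathbb{C}1_A$, the Proposition of \cite{JY} recalled above shows $A=V_{(0)}$ is local; write $\mathfrak{m}$ for its unique maximal ideal. (Locality is also explicit, case by case, in Theorems \ref{Bcyclicnilpotent}--\ref{3solvablecase3}.) For non-simplicity I would unwind the definitions: a submodule of $A$ for the Lie $A$-algebroid $\overline{B}:=B/A\partial(A)=V_{(1)}/V_{(0)}D(V_{(0)})$ is precisely a subspace that is both an ideal of $A$ and stable under the anchor action $\overline{w}\cdot c=w_0c=\pi(w)(c)$. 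The classification of Section 3 is exhaustive---dimension $2$ through Theorems \ref{Bcyclicnilpotent} and \ref{Bcyclicsolvable} via Proposition \ref{cyclicclass}(1), and dimension $3$ through Theorems \ref{3nilcase}--\ref{3solvablecase3} via Lemma \ref{dim3case} and Proposition \ref{cyclicclass}(2)---and in every case the final clause records that $w_0u\in\mathfrak{m}$ for all $u\in\mathfrak{m}$, $w\in\overline{B}$. Thus $\mathfrak{m}$ is at once an ideal of $A$ and $\pi(\overline{B})$-stable, i.e. a $\overline{B}$-submodule of $A$; as $A$ is local of dimension at least $2$ with residue field $\mathbb{C}$, the ideal $\mathfrak{m}$ has codimension one and dimension at least $1$, so it is a nonzero proper submodule. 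Hence $V_{(0)}$ is not a simple $\overline{B}$-module, and Proposition \ref{indecomposableVB} gives that $V$ is indecomposable and non-simple.

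Essentially all of the computational effort lives in Section 3, so modulo that classification the present statement is largely an assembly. The one place that warrants care is the translation between the abstract simplicity condition of Proposition \ref{indecomposableVB} and the concrete invariance statements ($w_0u\in\mathfrak{m}$ and $u\cdot w=0$) furnished by the classification theorems, together with the bookkeeping confirming that those theorems genuinely exhaust every cyclic non-Lie left Leibniz vertex $A$-algebroid of dimension $2$ or $3$ with $Ker(\partial)=\mathbb{C}1_A$.
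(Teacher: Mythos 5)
Your proposal is correct and follows essentially the same route as the paper's own proof: verify locality of $V_{(0)}$ from $\Ker(D|_{V_{(0)}})=\mathbb{C}\1$, use the Section~3 classification to see that $V_{(0)}$ is not a simple module for the Lie $V_{(0)}$-algebroid $V_{(1)}/V_{(0)}D(V_{(0)})$, and conclude via Proposition~\ref{indecomposableVB}. You in fact supply several details the paper leaves implicit (that a cyclic Leibniz algebra of dimension $\geq 2$ is automatically non-Lie, that $B\neq A\partial(A)$, and the explicit identification of the maximal ideal $\mathfrak{m}$ as a proper nonzero submodule), all of which are accurate.
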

\begin{proof} Since $V$ is an $\mathbb{N}$-graded vertex algebra, we can conclude immediately that $V_1$ is a vertex $V_0$-algebroid. Since $Ker(D|_{V_{(0)}})=\mathbb{C}{\bf 1}$, we can conclude immediately that $V_{(0)}$ is a local algebra. By Theorem \ref{Bcyclicnilpotent}, Theorem \ref{Bcyclicsolvable}, Theorem \ref{3nilcase}-Theorem \ref{3solvablecase3}, we have that $V_0$ is not a simple module for the Lie $V_{(0)}$-algebroid $V_{(1)}/V_{(0)}D(V_{(0)})$. %By Lemma \ref{V_0Vlocal} and 
By Proposition \ref{indecomposableVB}, we can conclude that $V$ is an indecomposable non-simple $\mathbb{N}$-graded vertex algebra. %Since $J(V)$ is the unique maximal ideal of $V$, we can conclude further that $V/J(V)$ is simple.
    
\end{proof}

%%%%%%%%%%%%%%%%%%%%%%%%%%%%%%%%%%%%%%
\subsection{A review on vertex algebras $V_B$ associated with vertex algebroids $B$ and their modules}\label{generaltheory}

\noindent In this subsection, we recall a construction of vertex algebras associated with vertex algebroids in \cite{LiY}. 

\vspace{0.2cm}

\noindent Let $A$ be a commutative associative algebra with identity $1_A$ and let $B$ be a vertex $A$-algebroid. We set $L(A\oplus B)=(A\oplus B)\otimes \mathbb{C}[t,t^{-1}].$ 
Subspaces $L(A)$ and $L(B)$ of $L(A\oplus B)$ are defined in the obvious way. 

\vspace{0.2cm}

\noindent We set $\hat{\partial}=\partial\otimes 1+1\otimes \frac{d}{dt}:L(A)\rightarrow L(A\oplus B).$ We define 
$deg(a\otimes t^n)=-n-1$, $deg(b\otimes t^n)=-n$ for $a\in A, ~b\in B,~n\in\mathbb{Z}$.
Then $L(A\oplus B)$ becomes a $\mathbb{Z}$-graded vector space: 
$$L(A\oplus B)=\oplus_{n\in\mathbb{Z}}L(A\oplus B)_{(n)}$$ where 
$L(A\oplus B)_{(n)}=A\otimes \mathbb{C}t^{-n-1}+B\otimes \mathbb{C}t^{-n}$. Clearly, the subspaces $L(A)$ and $L(B)$ are $\mathbb{Z}$-graded vector spaces as well. In addition, for $n\in \mathbb{N}$, $L(A)_{(n)}=A\otimes  \mathbb{C}t^{-n-1}.$ 
The linear map $\hat{\partial}:L(A)\rightarrow L(A\oplus B)$ is of degree 1. We define a bilinear product $[\cdot,\cdot]$ on $L(A\oplus B)$ as follow:
\begin{eqnarray}
&&[a\otimes t^m,a'\otimes t^n]=0,\label{aa'}\\
&&[a\otimes t^m, b\otimes t^n]=a_0b\otimes t^{m+n},\\
&&[b\otimes t^n,a\otimes t^m]=b_0a\otimes t^{m+n},\\
&&[b\otimes t^m,b'\otimes t^n]=b_0b'\otimes t^{m+n}+m(b_1b')\otimes t^{m+n-1}\label{bb'}
\end{eqnarray}
for $a,a'\in A$, $b,b'\in B$, $m,n\in\mathbb{Z}$. For convenience, we set 
$$\mathcal{L}:=L(A\oplus B)/\hat{\partial}L(A).$$ It was shown in \cite{LiY} that $\mathcal{L}=\oplus_{n\in\mathbb{Z}}\mathcal{L}_{(n)}$ is a $\mathbb{Z}$-graded Lie algebra. Here, 
$$\mathcal{L}_{(n)}=L(A\oplus B)_{(n)}/\hat{\partial}(L(A)_{(n-1)})=(A\otimes \mathbb{C}t^{-n-1}+B\otimes \mathbb{C}t^{-n})/\hat{\partial}(A\otimes\mathbb{C}t^{-n}).$$ In particular, $\mathcal{L}_{(0)}=A\otimes \mathbb{C}t^{-1}+B/\partial A$. 

\vspace{0.2cm}

\noindent Let $\rho:L(A\oplus B)\rightarrow\mathcal{L}$ be a natural linear map defined by $$\rho( u\otimes t^n)=u\otimes t^n+\hat{\partial}L(A).$$ For $u\in A\oplus B$, $n\in\mathbb{Z}$, we set $u(n)=\rho(u\otimes t^n)$ and $u(z)=\sum_{n\in\mathbb{Z}}u(n)z^{-n-1}$. Let $W$ be a $\mathcal{L}$-module. We use $u_W(n)$ or sometimes just $u(n)$ for the corresponding operator on $W$ and we write $u_W(z)=\sum_{n\in\mathbb{Z}}u(n)z^{-n-1}\in (\End W)[[z,z^{-1}]]$. The commutator relations in terms of generating functions are the following:

\begin{eqnarray}
&&[a(z_1),a'(z_2)]=0,\label{vaaa'}\\
&&[a(z_1), b(z_2)]=z_2^{-1}\delta\left(\frac{z_1}{z_2}\right)(a_0b)(z_2),\label{vaab}\\
&&[b(z_1),b'(z_2)]=z_2^{-1}\delta\left(\frac{z_1}{z_2}\right)(b_0b')(z_2)+(b_1b')(z_2)\frac{\partial}{\partial z_2}z_2^{-1}\delta\left(\frac{z_1}{z_2}\right)\label{vabb'}
\end{eqnarray}
for $a,a'\in A$, $b,b'\in B$. 
\vspace{0.2cm}

\noindent Next, we define $\mathcal{L}^{\geq 0}=\rho((A\oplus B)\otimes \mathbb{C}[t])\subset \mathcal{L}$, and $\mathcal{L}^{<0}=\rho((A\oplus B)\otimes t^{-1}\mathbb{C}[t^{-1}])\subset \mathcal{L}$. As a vector space, $\mathcal{L}=\mathcal{L}^{\geq 0}\oplus \mathcal{L}^{<0}$. The subspaces $\mathcal{L}^{\geq 0}$ and $\mathcal{L}^{<0}$ are graded sub-algebras of $\mathcal{L}$.

\vspace{0.2cm}

\noindent We now consider $\mathbb{C}$ as the trivial $\mathcal{L}^{\geq 0}$-module and form the following induced module $$V_{\mathcal{L}}=U(\mathcal{L})\otimes_{U(\mathcal{L}^{\geq 0})}\mathbb{C}.$$ 
In view of the Poincare-Birkhoff-Witt theorem, we have $V_{\mathcal{L}}=U(\mathcal{L}^{<0})$ as a vector space. We set ${\bf 1}=1\in V_{\mathcal{L}}$. We may consider $A\oplus B$ as a subspace: $$A\oplus B\rightarrow V_{\mathcal{L}}, \ \  a+b\mapsto a(-1){\bf 1}+b(-1){\bf 1}.$$ 
We assign $deg~\mathbb{C}=0$. Then $V_{\mathcal{L}}=\oplus_{n\in\mathbb{N}}(V_{\mathcal{L}})_{(n)}$ is a restricted $\mathbb{N}$-graded $\mathcal{L}$-module. 
\begin{prop} \cite{FKRW, MeP} 

\vspace{0.2cm}

\noindent There exists a unique vertex algebra structure on $V_{\mathcal{L}}$ with $Y(u,x)=u(x)$ for $u\in A\oplus B$. In fact, the vertex algebra $V_{\mathcal{L}}$ is a $\mathbb{N}$-graded vertex algebra and it is generated by $A\oplus B$. Furthermore, any restricted $\mathcal{L}$-module $W$ is naturally a $V_{\mathcal{L}}$-module with $Y_W(u,x)=u_W(x)$ for $u\in A\oplus B$. Conversely, any $V_{\mathcal{L}}$-module $W$ is naturally a restricted $\mathcal{L}$-module with $u_W(x)=Y_W(u,x)$ for $u\in A\oplus B$.
\end{prop}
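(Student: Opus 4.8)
The plan is to recognize this as an instance of the standard existence theorem that builds a vertex algebra from a local system of mutually local fields, so that the real work is checking the hypotheses of that theorem rather than constructing $Y$ by hand. First I would record that $V_{\mathcal{L}}$ is a restricted $\mathbb{N}$-graded $\mathcal{L}$-module with the vacuum creation property: since $\mathbb{C}$ is the trivial $\mathcal{L}^{\geq 0}$-module, every generator $u\in A\oplus B$ satisfies $u(n)\mathbf{1}=0$ for $n\geq 0$ and $u(-1)\mathbf{1}=u$ under the embedding $A\oplus B\hookrightarrow V_{\mathcal{L}}$. By the Poincar\'e--Birkhoff--Witt theorem $V_{\mathcal{L}}=U(\mathcal{L}^{<0})\mathbf{1}$ is spanned by the vectors $u^1(-n_1)\cdots u^k(-n_k)\mathbf{1}$ with each $n_i\geq 1$, and the $\mathbb{N}$-grading forces, for any fixed homogeneous $w\in V_{\mathcal{L}}$, that $u(n)w=0$ once $n$ is large enough. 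Hence each $u(z)=\sum_{n\in\mathbb{Z}}u(n)z^{-n-1}$ is a genuine field on $V_{\mathcal{L}}$, and $u(z)\mathbf{1}\in V_{\mathcal{L}}[[z]]$ restricts to $u$ at $z=0$.

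The crucial step is to verify that the generating fields $\{u(z):u\in A\oplus B\}$ are pairwise mutually local, and I would read this directly off the commutator formulas \eqref{vaaa'}--\eqref{vabb'}. Each $[u(z_1),v(z_2)]$ is a finite sum of terms of the form $w(z_2)\frac{\partial^{j}}{\partial z_2^{j}}z_2^{-1}\delta(z_1/z_2)$ with $j\in\{0,1\}$ and $w\in\{u_0v,\,u_1v\}\subseteq A\oplus B$; by the delta-function calculus this is equivalent to $(z_1-z_2)^2[u(z_1),v(z_2)]=0$, so the family is a local system of order at most two. With locality, lower truncation, and creativity in hand, I would invoke the existence theorem for vertex algebras associated to such fields (the Frenkel--Kac--Radul--Wang and Meurman--Primc construction, as recalled in \cite{LiY}): it produces a vertex algebra structure on $V_{\mathcal{L}}$ with $Y(u,x)=u(x)$ on $A\oplus B$, whose singular operator products reproduce exactly \eqref{vaaa'}--\eqref{vabb'}, and since the $u^1(-n_1)\cdots u^k(-n_k)\mathbf{1}$ arise as coefficients of iterated products of these fields applied to the vacuum, $A\oplus B$ generates $V_{\mathcal{L}}$, which is $\mathbb{N}$-graded. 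Uniqueness then follows from the reconstruction principle: once $Y$ is prescribed on generators, the value $Y\!\left(u^1(-n_1)\cdots u^k(-n_k)\mathbf{1},x\right)$ is forced to be an iterated normally ordered product of the $u^i(x)$ and their derivatives, so any two such structures coincide.

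For the module correspondence I would run the same machinery in parallel. Given a restricted $\mathcal{L}$-module $W$, the operators $u_W(z)$ obey the identical relations \eqref{vaaa'}--\eqref{vabb'} (they come from the same Lie algebra $\mathcal{L}$) and are lower-truncated by restrictedness, hence form a local system of fields on $W$ compatible with the generating fields on $V_{\mathcal{L}}$; the representation version of the existence theorem then endows $W$ with a $V_{\mathcal{L}}$-module structure satisfying $Y_W(u,x)=u_W(x)$. Conversely, for a $V_{\mathcal{L}}$-module $W$ the module axioms make each $Y_W(u,x)$ a field, and the commutator formula for module vertex operators, applied to generators $u\in A\oplus B$, reduces precisely to \eqref{vaaa'}--\eqref{vabb'} because those encode the products $u_0v$ and $u_1v$; reading off the coefficients shows the modes $u_W(n)$ satisfy the defining relations of $\mathcal{L}$, so $W$ is a restricted $\mathcal{L}$-module. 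The main obstacle throughout is the locality verification together with the careful bookkeeping that matches the bracket of $\mathcal{L}$ with the singular operator product of the generating fields; once that dictionary is secured, the cited existence theorem supplies everything else.
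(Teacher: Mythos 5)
Your proposal is correct and follows exactly the standard route that the paper relies on by citation to \cite{FKRW, MeP} (and as carried out in \cite{LiY}): the paper gives no proof of its own, and your argument --- verifying creativity, lower truncation, and order-two locality of the generating fields from the relations (\ref{vaaa'})--(\ref{vabb'}), then invoking the local-system existence theorem together with reconstruction for uniqueness and the restricted-module dictionary for the module correspondence --- is precisely the construction those references supply.
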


\vspace{0.2cm} 

\noindent Now, we set 
\begin{eqnarray*}
&&E_0=Span\{1_A-{\bf 1},a(-1)a'-a*a'~|~a,a'\in A\}\subset (V_{\mathcal{L}})_{(0)},\\
&&E_1=Span\{a(-1)b-a\cdot b~|~a\in A,b\in\mathfrak{L}\}\subset (V_{\mathcal{L}})_{(1)},\\
&&E=E_0\oplus E_1.
\end{eqnarray*}
We define 
$$I_{B}=U(\mathcal{L})\mathbb{C}[D]E.$$ The vector space $I_{B}$ is an $\mathcal{L}$-submodule of $V_{\mathcal{L}}$. We set $$V_{B}=V_{\mathcal{L}}/I_{B}.$$

\begin{prop}\cite{GMS, LiY} \ \ 

\vspace{0.2cm}

\noindent (i) $V_{B}$ is a $\mathbb{N}$-graded vertex algebra such that $(V_{B})_{(0)}=A$ and $(V_{B})_{(1)}=B$ (under the linear map $v\mapsto v(-1){\bf 1}$) and $V_{B}$ as a vertex algebra is generated by $A\oplus B$. Furthermore, for any $n\geq 1$,
\begin{eqnarray*}
&&(V_{B})_{(n)}\\
&&=span\{b_1(-n_1).....b_k(-n_k){\bf 1}~|~b_i\in B,n_1\geq...\geq n_k\geq 1, n_1+...+n_k=n\}.
\end{eqnarray*}

\vspace{0.2cm}

\noindent (ii) A $V_{B}$-module $W$ is a restricted module for the Lie algebra $\mathcal{L}$ with $v(n)$ acting as $v_n$ for $v\in A\oplus B$, $n\in\mathbb{Z}$. Furthermore, the set of $V_{B}$-submodules is precisely the set of $\mathcal{L}$-submodules.
\end{prop}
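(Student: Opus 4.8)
The plan is to prove that $I_B$ is a graded ideal of the vertex algebra $V_{\mathcal{L}}$, so that $V_B=V_{\mathcal{L}}/I_B$ is an $\mathbb{N}$-graded vertex algebra; the identification of $(V_B)_{(0)}$, $(V_B)_{(1)}$ and the spanning set is then carried out degree by degree, and part (ii) follows by transporting module structures across the quotient map.

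First I would check that $I_B=U(\mathcal{L})\mathbb{C}[D]E$ is a two-sided ideal. By the preceding proposition $V_{\mathcal{L}}$ is generated as a vertex algebra by $A\oplus B$, whose modes $u(n)$ ($u\in A\oplus B$, $n\in\mathbb{Z}$) span $\mathcal{L}$; the iterate formula $\displaystyle (u_mv)_n=\sum_{i\ge0}(-1)^i\binom{m}{i}\big(u_{m-i}v_{n+i}-(-1)^m v_{m+n-i}u_i\big)$ then shows that every mode $w_n$ with $w\in V_{\mathcal{L}}$ lies in $U(\mathcal{L})$ acting on $V_{\mathcal{L}}$. Hence $U(\mathcal{L})$-stability of $I_B$ already gives stability under all modes, while the factor $\mathbb{C}[D]$ together with $[D,\mathcal{L}]\subseteq\mathcal{L}$ gives $D$-stability; thus $I_B$ is an ideal. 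Since $E_0\subset(V_{\mathcal{L}})_{(0)}$ and $E_1\subset(V_{\mathcal{L}})_{(1)}$ are homogeneous, $D$ has degree $1$, and $U(\mathcal{L})$ is graded, the ideal $I_B$ is graded; therefore $V_B$ is an $\mathbb{N}$-graded vertex algebra, and it is generated by the image of $A\oplus B$ because $V_{\mathcal{L}}$ is.

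Next I would pin down the low-degree pieces using the Poincar\'e--Birkhoff--Witt theorem for $U(\mathcal{L}^{<0})$ and the degree convention $\deg a(n)=-n-1$, $\deg b(n)=-n$. In degree $0$ one finds that $(V_{\mathcal{L}})_{(0)}$ is the commutative polynomial algebra on $\{a(-1){\bf 1}:a\in A\}$ (these modes commute by (\ref{aa'})), and the relations in $E_0$ --- $1_A\sim{\bf 1}$ and $a(-1)a'(-1){\bf 1}\sim(a*a'){\bf 1}$ --- collapse it onto $A$. In degree $1$, $(V_{\mathcal{L}})_{(1)}$ is spanned by monomials $a_1(-1)\cdots a_r(-1)\,b(-1){\bf 1}$ and $a(-2){\bf 1}$; here $a(-2){\bf 1}=(\partial a)(-1){\bf 1}$ because $\hat\partial L(A)=0$ in $\mathcal{L}$, and the relations in $E_1$ --- $a(-1)b(-1){\bf 1}\sim(a\cdot b){\bf 1}$ --- reduce every such monomial to an element of $B$, giving $(V_B)_{(1)}=B$. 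The general spanning-set formula follows by the same reduction: in any monomial $u_1(-n_1)\cdots u_k(-n_k){\bf 1}$ the $A$-modes of index $\le-2$ are replaced by $B$-modes via $a(-n)=\tfrac1{n-1}(\partial a)(-(n-1))$ for $n\ge2$, the $A$-modes of index $-1$ are absorbed through the relations in $E$, and the commutator relation (\ref{vabb'}), which strictly decreases the number of factors, lets one reorder the surviving $B$-modes so that $n_1\ge\cdots\ge n_k\ge1$.

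For part (ii) I would pull a $V_B$-module $W$ back along $V_{\mathcal{L}}\twoheadrightarrow V_B$ to a $V_{\mathcal{L}}$-module on which $I_B$ acts as zero; by the earlier proposition this is exactly a restricted $\mathcal{L}$-module with $v(n)$ acting as $v_n$, and conversely any restricted $\mathcal{L}$-module annihilating $I_B$ descends to $V_B$. Since the modes $v_n$ with $v\in A\oplus B$ generate $\mathcal{L}$ and $V_B$ is generated by $A\oplus B$, a subspace of $W$ is stable under all vertex-operator modes if and only if it is stable under $\mathcal{L}$, which yields the asserted equality of $V_B$-submodules and $\mathcal{L}$-submodules. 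The step I expect to be the main obstacle is the \emph{injectivity} inside the identifications $(V_B)_{(0)}=A$ and $(V_B)_{(1)}=B$: surjectivity is formal, but one must prove that $(I_B)_{(0)}$ and $(I_B)_{(1)}$ contain nothing beyond the commutative-algebra and $A$-module relations generated by $E_0$ and $E_1$. The real danger is that elements of negative $\mathcal{L}$-degree acting on the degree-raising part $\mathbb{C}[D]E$ could produce extra relations in degrees $0$ and $1$ and thereby collapse $A$ or $B$; ruling this out amounts to checking that the defining axioms of the vertex $A$-algebroid $B$ are precisely compatible with the $\mathcal{L}$-action, and this is where the vertex-algebroid relations must be used in full.
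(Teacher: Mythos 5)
The paper offers no proof of this proposition at all --- it is quoted with citations to [GMS, LiY] --- so there is nothing in the text to compare your argument against; I can only judge it against the argument in those references. Your overall architecture is the standard one and is correct as far as it goes: $I_B$ is a graded two-sided ideal (your reduction of ideal-stability to $U(\mathcal{L})$-stability plus $D$-stability via skew-symmetry is fine), the quotient is $\mathbb{N}$-graded and generated by the image of $A\oplus B$, the spanning set in degree $n\geq 1$ follows from the PBW basis of $U(\mathcal{L}^{<0})$ together with the substitution $a(-n-1)=\tfrac1n(\partial a)(-n)$ and the reordering permitted by the commutator relations, and part (ii) is the formal transport of restricted $\mathcal{L}$-module structures across the quotient.

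The genuine gap is the one you name yourself and then leave open: the injectivity of $A\rightarrow (V_B)_{(0)}$ and $B\rightarrow (V_B)_{(1)}$. Everything you actually prove only shows that $(V_B)_{(0)}$ is a quotient of $A$ and $(V_B)_{(1)}$ is a quotient of $B$; the equality requires controlling $(I_B)_{(0)}$ and $(I_B)_{(1)}$, and saying that this ``amounts to checking that the axioms of the vertex $A$-algebroid are compatible with the $\mathcal{L}$-action'' is a statement of the problem, not a solution. In [LiY] this is the decisive step: one proves that $u(n)D^kE\subseteq \mathbb{C}[D]E$ for all $u\in A\oplus B$ and $n\geq 0$, i.e.\ that $\mathbb{C}[D]E$ is already an $\mathcal{L}^{\geq 0}$-submodule, by a case-by-case computation on the generators $1_A-{\bf 1}$, $a(-1)a'-a*a'$, $a(-1)b-a\cdot b$ that uses essentially every identity in the definition of a vertex $A$-algebroid (for instance, closing the case $b'(1)\bigl(a(-1)b-a\cdot b\bigr)$ forces the relation $\langle a\cdot u,v\rangle=a*\langle u,v\rangle-\pi(u)(\pi(v)(a))$, and $b'(0)$ acting on $a(-1)a'-a*a'$ forces the derivation property of $b'_0$ on $*$). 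Once that is done, PBW gives $I_B=U(\mathcal{L}^{<0})\,\mathbb{C}[D]E$, whence $(I_B)_{(0)}=E_0$ and $(I_B)_{(1)}=E_1+DE_0$, and the injectivity follows. Without this lemma your proof establishes only surjections in degrees $0$ and $1$, so the central claim of part (i) remains unproved.
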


\begin{prop}\label{prop30}\cite{LiY} 
Let $W=\oplus_{n\in\mathbb{N}}W_{(n)}$ be a $\mathbb{N}$-graded $V_{B}$-module with $W_{(0)}\neq\{ 0\}$. Then $W_{(0)}$ is an $A$-module with $a\cdot w=a_{-1}w$ for $a\in A$, $w\in W_{(0)}$ and $W_{(0)}$ is a module for the Lie algebra $B/A\partial(A)$ with $b\cdot w=b_0w$ for $b\in B$, $w\in W_{(0)}$. Furthermore, $W_{(0)}$ equipped with these module structures is a module for Lie $A$-algebroid $B/A\partial A$. If $W$ is graded simple, then $W_{(0)}$ is a simple module for Lie $A$-algebroid $B/A\partial A$.

\end{prop}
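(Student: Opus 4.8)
The plan is to argue entirely through the $\mathbb{Z}$-graded Lie algebra $\mathcal{L}$, using that $V_B$-modules are exactly restricted $\mathcal{L}$-modules and that $V_B$-submodules are exactly $\mathcal{L}$-submodules. Everything is controlled by degree bookkeeping: since $a_i$ has degree $-i-1$ and $b_i$ has degree $-i$, the only degree-preserving operators on the bottom piece $W_{(0)}$ are $a_{-1}$ ($a\in A$) and $b_0$ ($b\in B$), while every $a_i$ with $i\geq 0$, every $b_i$ with $i\geq 1$, and every operator of strictly negative degree annihilates $W_{(0)}$ (it lands in $W_{(<0)}=0$). Combined with the relations $a_{-1}a'=a*a'$ and $a_{-1}b=a\cdot b$ that hold in $V_B$ (they are the defining elements of $E_0,E_1$), these vanishings collapse each identity to a single surviving term.

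First I would build the two module structures. For the $A$-action, expanding $(a*a')_{-1}=(a_{-1}a')_{-1}$ by the associativity (iterate) formula and acting on $w\in W_{(0)}$, every summand except $a_{-1}a'_{-1}w$ is killed by the degree vanishing, so $(a*a')_{-1}w=a_{-1}(a'_{-1}w)$; together with $1_A=\mathbf 1$ this makes $W_{(0)}$ an $A$-module. For the Lie action, the commutator formula $[u_m,v_n]=\sum_{i\geq 0}\binom{m}{i}(u_iv)_{m+n-i}$ at $m=n=0$ gives $[b_0,b'_0]=(b_0b')_0$, and since $b_0b'$ is the Leibniz bracket this is a Lie-algebra action. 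To see it descends to $B/A\partial(A)$ I would use $(\partial a')_0=0$ in $\mathcal{L}$ (equivalently $\hat\partial(a'\otimes t^0)\equiv 0$, i.e.\ the Derivation axiom): for a generator $a\cdot\partial(a')$ of $A\partial(A)$, the iterate formula yields $(a\cdot\partial(a'))_0w=a_{-1}\bigl((\partial a')_0w\bigr)=0$ on $W_{(0)}$.

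Next I would verify the two Lie $A$-algebroid module axioms. The axiom $u(aw)-a(uw)=(ua)w$ reads $[b_0,a_{-1}]w=(b_0a)_{-1}w$, which is precisely the $m=0$ case of the commutator formula, with $b_0a=\pi(b)(a)$ the derivation action of $\mathfrak{g}=B/A\partial(A)$ on $A$. The axiom $a(uw)=(au)w$ reads $a_{-1}(b_0w)=(a\cdot b)_0w$, which follows by expanding $(a\cdot b)_0=(a_{-1}b)_0$ with the iterate formula and discarding the degree-vanishing terms. This yields the Lie $A$-algebroid module structure on $W_{(0)}$.

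Finally, for graded simplicity, suppose $N$ is a proper nonzero Lie $A$-algebroid submodule of $W_{(0)}$ and set $W'=U(\mathcal{L})N$, a graded $\mathcal{L}$-submodule of $W$. The crux---and the step I expect to be the main obstacle---is to show $W'_{(0)}=N$. I would split $\mathcal{L}$ into its strictly positive, zero, and strictly negative degree parts $\mathcal{L}^{+},\mathcal{L}_{(0)},\mathcal{L}^{-}$ and use the PBW factorization $U(\mathcal{L})=U(\mathcal{L}^{+})\,U(\mathcal{L}_{(0)})\,U(\mathcal{L}^{-})$: acting on $N\subseteq W_{(0)}$ the rightmost factor $U(\mathcal{L}^{-})$ contributes only through its augmentation (negative-degree operators kill $W_{(0)}$), and $U(\mathcal{L}^{+})$ can only raise degree, so the degree-$0$ component of $U(\mathcal{L})N$ is $U(\mathcal{L}_{(0)})N$. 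Since $\mathcal{L}_{(0)}$ is spanned by the $a_{-1}$ and $b_0$ and $N$ is stable under both, $U(\mathcal{L}_{(0)})N=N$. Hence $W'$ is a nonzero graded submodule with $W'_{(0)}=N\subsetneq W_{(0)}$, so $W'\neq W$, contradicting graded simplicity; therefore $W_{(0)}$ is simple. The delicate point throughout is this ordered PBW argument ensuring no product of modes can return to degree $0$ except via $\mathcal{L}_{(0)}$.
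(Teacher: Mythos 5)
Your argument is correct. Note that the paper does not prove this proposition at all---it is quoted from [LiY1]---and your reconstruction follows essentially the same route as the original source: identify $a_{-1}$ and $b_0$ as the only modes of $A\oplus B$ preserving $W_{(0)}$, use the relations $a_{-1}a'=a*a'$, $a_{-1}b=a\cdot b$ in $V_B$ together with the iterate and commutator formulas to collapse each algebroid-module axiom to its single surviving term, observe that $(\partial a')(0)=0$ in $\mathcal{L}$ forces $A\partial(A)$ to act trivially on $W_{(0)}$, and run the PBW/triangular-decomposition argument on $U(\mathcal{L})N$ for simplicity. No gaps.
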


\vspace{0.2cm} 

\noindent Now, we set $\mathcal{L}_{\pm }=\oplus_{n\geq 1}\mathcal{L}_{(\pm n)}$ and $\mathcal{L}_{\leq 0}=\mathcal{L}_{-}\oplus \mathcal{L}_{(0)}$. Let $U$ be a module for the Lie algebra $\mathcal{L}_{(0)}(=A\oplus B/\partial (A))$. Then $U$ is an $\mathcal{L}_{(\leq 0)}$-module under the following actions:
$$a(n-1)\cdot u=\delta_{n,0}a\cdot u,\ \ b(n)\cdot u=\delta_{n,0}b\cdot u\text{ for }a\in A,b\in B, n\geq 0.$$ 
Next, we form the induced $\mathcal{L}$-module $M(U)=\Ind_{\mathcal{L}_{(\leq 0)}}^{\mathcal{L}}U$. Endow $U$ with degree 0, making $M(U)$ a $\mathbb{N}$-graded $\mathcal{L}$-module. In fact, $M(U)$ is a $V_{\mathcal{L}}$-module. 
We set $$W(U)=span\{v_nu~|~v\in E, ~n\in\mathbb{Z}, ~u\in U\}\subset M(U),$$ and
$$M_{B}(U)=M(U)/U(\mathcal{L})W(U).$$
\begin{prop}\label{simplemodulerelations}\cite{LiY} \ \ 

\vspace{0.2cm} 

\noindent (i) Let $U$ be a module for the Lie algebra $\mathcal{L}_{(0)}$. Then $M_{B}(U)$ is a $V_{B}$-module. If $U$ is a module for the Lie $A$-algebroid $B/A\partial A$ then $(M_{B}(U))_{(0)}=U$.

\vspace{0.2cm} 

\noindent (ii) Let $U$ be a module for the Lie $A$-algebroid $B/A\partial A$. Then there exists a unique maximal graded $U(\mathcal{L})$-submodule $J(U)$  of $M(U)$ with the property that $J(U)\cap U=0$. Moreover, $L(U)=M(U)/J(U)$ is a $\mathbb{N}$-graded $V_{B}$-module such that $L(U)_{(0)}=U$ as a module for the Lie $A$-algebroid $B/A\partial A$. If $U$ is a simple $B/A\partial A$, $L(U)$ is a $\mathbb{N}$-graded simple $V_{B}$-module.

\vspace{0.2cm}

\noindent (iii) Let $W=\coprod_{n\in\mathbb{N}}W_{(n)}$ be an $\mathbb{N}$-graded simple $V_B$-module with $W_{(0)}\neq 0$. Then $W\cong L(W_{(0)})$. 

\vspace{0.2cm}

\noindent (iv) For any complete set $H$ of representatives of equivalence classes of simple modules for the Lie $A$-algebroid $B/A\partial A$, $\{L(U)~|~U\in H\}$ is a complete set of representatives of equivalence classes of $\mathbb{N}$-graded simple $V_B$-modules.
\end{prop}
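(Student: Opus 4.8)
The plan is to run the standard generalized-Verma-module machinery for the graded Lie algebra $\mathcal{L}=\mathcal{L}_-\oplus\mathcal{L}_{(0)}\oplus\mathcal{L}_+$, using Proposition \ref{prop30} as the bridge between $V_B$-modules and modules for the Lie $A$-algebroid $B/A\partial A$ on the degree-zero piece, and establishing (i)--(iv) in order, each using its predecessors. For (i), that $M_B(U)$ is a $V_B$-module is essentially formal: $M(U)$ is a $V_{\mathcal{L}}$-module by construction, and a $V_{\mathcal{L}}$-module descends to $V_B=V_{\mathcal{L}}/I_B$ precisely when every mode $v_n$ with $v\in I_B$ annihilates it; since $I_B=U(\mathcal{L})\mathbb{C}[D]E$, the commutator relations (\ref{vaaa'})--(\ref{vabb'}) reduce this to the vanishing of $v_nu$ for $v\in E$, $u\in U$, which is exactly what passing to the quotient by $U(\mathcal{L})W(U)$ imposes.

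The real content of (i) is the assertion $(M_B(U))_{(0)}=U$, equivalently $U(\mathcal{L})W(U)\cap U=0$ where $U=M(U)_{(0)}$. Because $M(U)$ is $\mathbb{N}$-graded, any mode $v_nu$ landing in negative degree vanishes automatically, so $W(U)$ lives in non-negative degrees, and its degree-zero part is spanned by $v_{-1}u$ for $v\in E_0$ and $v_0u$ for $v\in E_1$. Expanding these on the generators $1_A-\1$ and $a(-1)a'-a*a'$ of $E_0$ and $a(-1)b-a\cdot b$ of $E_1$ yields exactly the defining relations of a module for the Lie $A$-algebroid $B/A\partial A$ (that $1_A$ acts as $\id$, that $a\cdot(a'\cdot u)=(a*a')\cdot u$, and the compatibility between the $A$-action and the $B/A\partial A$-action), all of which $U$ satisfies by hypothesis; hence $W(U)_{(0)}=0$. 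It then remains to propagate this to the whole generated submodule, namely to show that applying lowering operators from $\mathcal{L}_-$ to the positive-degree relations in $W(U)$ produces nothing new in degree zero.

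For (ii), the maximal graded submodule $J(U)\subset M(U)$ with $J(U)\cap U=0$ exists by the usual argument: the degree-zero component of a sum of graded submodules is the sum of their degree-zero components, so the sum of all submodules meeting $U$ trivially again meets $U$ trivially. By (i), $U(\mathcal{L})W(U)\subseteq J(U)$, so $L(U)=M(U)/J(U)$ is a quotient of $M_B(U)$ and hence a $V_B$-module with $L(U)_{(0)}=U$. If $U$ is simple then $L(U)$ is graded simple, since any nonzero graded submodule $N$ has $N_{(0)}\subseteq U$ a submodule, so either $N_{(0)}=0$ and $N=0$, or $N_{(0)}=U$ and, as $U$ generates $L(U)$, $N=L(U)$. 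For (iii), if $W$ is graded simple with $W_{(0)}\neq 0$, then $W_{(0)}$ is a simple $B/A\partial A$-module by Proposition \ref{prop30}, and simplicity forces $W=U(\mathcal{L})W_{(0)}$; the universal property of the induced module gives a surjection $M(W_{(0)})\twoheadrightarrow W$ whose kernel meets $W_{(0)}$ trivially and hence lies in $J(W_{(0)})$, yielding a surjection $L(W_{(0)})\twoheadrightarrow W$ of graded simple modules, necessarily an isomorphism. Finally (iv) is bookkeeping: (ii) turns each simple $U$ into a graded simple $L(U)$, (iii) shows every graded simple arises this way, and $L(U)_{(0)}=U$ gives $L(U)\cong L(U')$ iff $U\cong U'$.

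I expect the main obstacle to be the degree-zero computation underlying (i): beyond checking that the degree-zero consequences of the relations $E$ coincide with the Lie $A$-algebroid module axioms, one must show that the \emph{whole} submodule $U(\mathcal{L})W(U)$, and not merely $W(U)$ itself, has trivial intersection with $U=M(U)_{(0)}$. This requires exploiting the triangular decomposition of $\mathcal{L}$ together with a PBW argument to control the degree-zero vectors produced when lowering operators from $\mathcal{L}_-$ act on the positive-degree elements of $W(U)$, while keeping careful track of the identification $\mathcal{L}=L(A\oplus B)/\hat{\partial}L(A)$ throughout.
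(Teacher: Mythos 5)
The paper does not prove this proposition at all --- it is quoted verbatim from [LiY1], so there is no internal proof to compare against; your proposal reconstructs the route taken in that reference, and the overall architecture (induction from $\mathcal{L}_{\leq 0}$, reduction of the $V_B$-module condition to the vanishing of the modes of $E$ on $U$, existence of $J(U)$ by summing graded submodules meeting $U$ trivially, and the standard highest-weight formalism for (ii)--(iv)) is correct. Parts (ii), (iii), (iv) as you sketch them are genuinely routine once (i) is in hand, modulo one small slip: in (ii), ``$N_{(0)}=0$ and $N=0$'' is not automatic --- you must invoke the maximality of $J(U)$ (the preimage of a nonzero $N$ with $N_{(0)}=0$ would be a graded submodule strictly containing $J(U)$ and still meeting $U$ trivially).

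The genuine gap is the step you yourself flag and then defer: the proof that $U(\mathcal{L})W(U)\cap U=0$, which is the entire content of $(M_B(U))_{(0)}=U$ and on which (ii)--(iv) all rest. Computing $W(U)_{(0)}$ and matching it with the Lie $A$-algebroid module axioms is the easy half; the missing idea is a \emph{stability lemma}: one must enlarge $W(U)$ to $\widetilde{W}(U)=\mathrm{span}\{v_nu~|~v\in\mathbb{C}[D]E,\ n\in\mathbb{Z},\ u\in U\}$ (harmless in degree zero, since $(Dv)_n=-nv_{n-1}$) and prove that $\widetilde{W}(U)$ is preserved by $\mathcal{L}_{(0)}\oplus\mathcal{L}_-$. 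Only then does PBW give $U(\mathcal{L})W(U)=U(\mathcal{L}_+)\widetilde{W}(U)$, whose degree-zero component is $\widetilde{W}(U)_{(0)}=0$. Proving that stability is not a formal consequence of the triangular decomposition: via the commutator formula $[u(m),v(n)]=\sum_{i\geq 0}\binom{m}{i}(u_iv)(m+n-i)$ it requires showing that $u_iv$, for $u\in A\oplus B$, $v\in E$ and $i\geq 0$, again lies in $\mathbb{C}[D]E$ modulo terms whose relevant modes already annihilate $U$ --- a case-by-case computation on the generators $1_A-{\bf 1}$, $a(-1)a'-a*a'$, $a(-1)b-a\cdot b$ that consumes the vertex $A$-algebroid axioms themselves. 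As written, your proposal asserts that ``a PBW argument'' will control the degree-zero output of $U(\mathcal{L})W(U)$ without identifying this lemma or the computation behind it, so the central claim of (i) remains unproved.
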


%%%%%%%%%%%%%%%%%%%%%%%%%%%%%%%%%%%%%%%%%%%%%%

%%%%%%%%%%%%%%%%%%%%%%%%%%%%%%%%%%%%%%
\subsection{On representation theory of vertex algebra $V_B$ when $B$ is a cyclic Leibniz algebra}\label{moduletheory}

The following theorem is an immediate consequence of Theorem \ref{v0v1cyclic}.
\begin{thm} Let $B$ be a vertex $A$-algebroid as in Theorem \ref{Bcyclicnilpotent}-Theorem \ref{Bcyclicsolvable}, Theorem \ref{3nilcase}- Theorem \ref{3solvablecase3}. Then $V_B$ is an indecomposable non-simple vertex algebra. 
\end{thm}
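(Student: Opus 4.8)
The plan is to verify that $V_B$ satisfies the three hypotheses of Theorem~\ref{v0v1cyclic} and then apply that theorem directly; the statement is really a repackaging of the classification already carried out. First I would recall from the construction of \cite{GMS, LiY} that $V_B$ is an $\mathbb{N}$-graded vertex algebra with $(V_B)_{(0)}=A$ and $(V_B)_{(1)}=B$ (under $v\mapsto v(-1){\bf 1}$), and that $V_B$ is generated by $A\oplus B=(V_B)_{(0)}\oplus (V_B)_{(1)}$. This immediately supplies the generation clause of hypothesis (a).

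Next I would dispatch the dimensional and structural clauses. For every vertex $A$-algebroid $B$ appearing in Theorem~\ref{Bcyclicnilpotent}--Theorem~\ref{Bcyclicsolvable} and Theorem~\ref{3nilcase}--Theorem~\ref{3solvablecase3}, the classification produces an explicit finite basis of $A$: the set $\{1_A,a\}$ when $\dim B=2$ and $\{1_A,a,b_0a\}$ when $\dim B=3$. Hence $2\leq \dim (V_B)_{(0)}<\infty$, which finishes hypothesis (a). Since each of those theorems takes $B$ to be a cyclic non-Lie left Leibniz algebra with $\dim B\in\{2,3\}$, we have $1\leq \dim (V_B)_{(1)}<\infty$ with $(V_B)_{(1)}$ cyclic of dimension $2$ or $3$, which is hypothesis (b) verbatim.

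The only clause needing a brief argument is (c). In the vertex algebra $V_B$ the canonical derivation $D$ acts on the degree-zero piece by $Da=a(-2){\bf 1}$, and under the identifications $(V_B)_{(0)}=A$, $(V_B)_{(1)}=B$ this restriction of $D$ coincides with the structural map $\partial\colon A\to B$ of the vertex $A$-algebroid. Every classification theorem is established under the standing assumption $Ker(\partial)=\mathbb{C}1_A$, and since ${\bf 1}=1_A$ in $V_B$ this reads $Ker(D|_{(V_B)_{(0)}})=\mathbb{C}{\bf 1}$, which is exactly hypothesis (c).

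With (a), (b), and (c) verified, Theorem~\ref{v0v1cyclic} applies and gives that $V_B$ is indecomposable and non-simple. I anticipate no genuine difficulty: the single nontrivial ingredient is the identity $D|_A=\partial$, which is part of the standard dictionary of \cite{LiY} between a vertex algebroid and its associated vertex algebra, while the substantive work---namely that the classification forces $A$ to fail to be a simple module for the Lie $A$-algebroid $B/A\partial(A)$, and that Proposition~\ref{indecomposableVB} then converts this into non-simplicity---is already absorbed into the proof of Theorem~\ref{v0v1cyclic}.
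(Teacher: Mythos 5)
Your proposal is correct and follows exactly the route the paper takes: the paper states this theorem as an immediate consequence of Theorem~\ref{v0v1cyclic}, and your verification of hypotheses (a)--(c) (including the identification $D|_{(V_B)_{(0)}}=\partial$ and the standing assumption $\Ker(\partial)=\mathbb{C}1_A$ in the classification theorems) simply makes that deduction explicit.
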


Now, we will classify irreducible modules of the vertex algebra $V_B$ when $B$ are vertex $A$-algebroids as in Theorem \ref{Bcyclicnilpotent}-Theorem \ref{Bcyclicsolvable}, Theorem \ref{3nilcase}- Theorem \ref{3solvablecase3}. By Proposition \ref{simplemodulerelations}, it is enough to classify simple modules for the Lie $A$-algebroids $B/A\partial(A)$. 

The following lemma is a consequence of the definitions of module of a Lie algebroid, decomposable module and non-irreducible module. 
\begin{lem}\label{indecomposableirreducible} Let $U$ be a Lie $A$-algebroid $B/A\partial(A)$. 

 \noindent (i) If $U$ is a decomposable Lie $A$-algbroid $B/A\partial(A)$-module then $U$ is a decomposable module for the Lie algebra $B/A\partial(A)$ and $U$ is a decomposable $A$-module. Equivalently, if $U$ is either an indecomposable module for the Lie algebra $B/A\partial(A)$ or an indecomposable $A$-module, then $U$ is an indecomposable Lie $A$-algbroid $B/A\partial(A)$-module.

\noindent (ii) If $U$ is not an irreducible Lie $A$-algbroid $B/A\partial(A)$-module then $U$ is not an irreducible module for the Lie algebra $B/A\partial(A)$ and $U$ is not an irreducible $A$-module. Likewise, if $U$ is either an irreducible module for the Lie algebra $B/A\partial(A)$ or an irreducible $A$-module, then $U$ is an irreducible Lie $A$-algbroid $B/A\partial(A)$-module.
\end{lem}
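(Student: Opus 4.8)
The plan is to unwind the definition of a module for the Lie $A$-algebroid $B/A\partial(A)$ and observe that the notion of submodule there is \emph{strictly stronger} than being a submodule for either of the two underlying structures taken alone. Recall that such a module $U$ comes equipped with both an action of the Lie algebra $B/A\partial(A)$ and an $A$-module structure, intertwined by the compatibility relations $u(aw)-a(uw)=(ua)w$ and $a(uw)=(au)w$. Consequently a Lie $A$-algebroid submodule of $U$ is by definition a subspace stable under both actions, so that every Lie $A$-algebroid submodule is simultaneously a submodule for the Lie algebra $B/A\partial(A)$ and an $A$-submodule. This single inclusion of lattices of submodules drives both parts.

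For part (i), I would start from a decomposition $U=U_1\oplus U_2$ into nonzero Lie $A$-algebroid submodules. Since each $U_i$ is stable under the Lie action, this is at the same time a decomposition of $U$ as a module for the Lie algebra $B/A\partial(A)$; since each $U_i$ is also stable under the $A$-action, it is likewise a decomposition of $U$ as an $A$-module. Hence $U$ is decomposable in both senses. The equivalent reformulation is just the contrapositive: if $U$ fails to decompose as a Lie-algebra module, or fails to decompose as an $A$-module, then no decomposition into Lie $A$-algebroid submodules can exist, so $U$ is an indecomposable Lie $A$-algebroid module.

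For part (ii), the argument is identical with ``nonzero proper submodule'' in place of ``direct summand''. If $U$ is not irreducible as a Lie $A$-algebroid module, choose a nonzero proper Lie $A$-algebroid submodule $W$. Being stable under both actions, $W$ is at once a nonzero proper submodule for the Lie algebra $B/A\partial(A)$ and a nonzero proper $A$-submodule, so $U$ is reducible in both senses. Taking the contrapositive yields the final assertion: irreducibility of $U$ as a module for the Lie algebra, or as an $A$-module, forces irreducibility as a Lie $A$-algebroid module.

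There is no genuine obstacle here; the entire content is the observation that the submodule condition for the algebroid is the conjunction of the two separate submodule conditions, so the collection of algebroid submodules is contained in each of the other two collections. The only point requiring care is the \emph{direction} of the implications: a Lie-algebra submodule (or an $A$-submodule) need not be stable under the other action, so the converse statements are false and the implications genuinely run only from the algebroid structure to the separate structures.
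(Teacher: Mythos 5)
Your proof is correct and matches the paper's intent exactly: the paper offers no written argument beyond remarking that the lemma "is a consequence of the definitions," and your write-up simply supplies the one observation that makes this work, namely that a Lie $A$-algebroid submodule is by definition stable under both the Lie-algebra action and the $A$-action, so the lattice of algebroid submodules sits inside each of the other two lattices. The contrapositive handling of the "equivalently/likewise" clauses and the caution about the non-reversibility of the implications are both accurate.
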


\begin{lem}\label{solvabledimension} Let $(\mathfrak{a}, *)$ be a unital commutative associative algebra with the identity $1_{\mathfrak{a}}$. Let $\mathfrak{g}$ be a Lie $\mathfrak{a}$-algebroid such that $\mathfrak{g}$ is solvable. Assume that 

\noindent (i) $\mathfrak{a}$ is a local algebra and $\mathfrak{a}=\mathbb{C}1_{\mathfrak{a}}\oplus \mathfrak{a}'$. Here $\mathfrak{a}'$ is a unique maximal ideal of $\mathfrak{a}$;

\noindent (ii) for $u\in \mathfrak{a}'$, $w\in \mathfrak{g}$, $w_0u\in \mathfrak{a}'$, and $u\cdot w=0$. 

\noindent Then every finite dimensional irreducible Lie $\mathfrak{a}$-algebroid $\mathfrak{g}$-module $U\neq 0$ is one dimensional. In addition, $\mathfrak{a}'$ acts as zero on $U$.
\end{lem}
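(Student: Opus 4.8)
The plan is to first show that the maximal ideal $\mathfrak{a}'$ annihilates $U$, and then to deduce one-dimensionality from Lie's theorem. Throughout I use that a Lie $\mathfrak{a}$-algebroid module carries both an associative unital $\mathfrak{a}$-module structure (with $1_{\mathfrak{a}}$ acting as the identity) and a $\mathfrak{g}$-module structure, linked by the two compatibility axioms $u(aw)-a(uw)=(ua)w$ and $a(uw)=(au)w$, where $ua$ denotes the derivation action of $u\in\mathfrak{g}$ on $a\in\mathfrak{a}$ (the operator written $u_0a$ earlier) and $au$ denotes the $\mathfrak{a}$-module action $a\cdot u$ on $\mathfrak{g}$.

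First I would consider the subspace $\mathfrak{a}'U=\mathrm{span}\{a\cdot w\mid a\in\mathfrak{a}',\,w\in U\}$ and argue that it is a Lie $\mathfrak{a}$-algebroid submodule of $U$. It is an $\mathfrak{a}$-submodule because $\mathfrak{a}'$ is an ideal of $\mathfrak{a}$ and $U$ is an associative $\mathfrak{a}$-module. To see it is $\mathfrak{g}$-stable, take $a\in\mathfrak{a}'$, $w\in U$, $u\in\mathfrak{g}$ and use $u(aw)=a(uw)+(ua)w$. The term $a(uw)$ lies in $\mathfrak{a}'U$ since $a\in\mathfrak{a}'$, and the derivation term $(ua)w=(u_0a)w$ lies in $\mathfrak{a}'U$ precisely because hypothesis (ii) guarantees $u_0a\in\mathfrak{a}'$ when $a\in\mathfrak{a}'$. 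Hence $u(aw)\in\mathfrak{a}'U$, so $\mathfrak{a}'U$ is a submodule. Since $U$ is irreducible, $\mathfrak{a}'U$ is either $0$ or $U$; and if $\mathfrak{a}'U=U$, then, because $\mathfrak{a}'$ is the Jacobson radical of the local ring $\mathfrak{a}$ and $U$ is finitely generated over $\mathfrak{a}$ (being finite dimensional over $\mathbb{C}$), Nakayama's lemma forces $U=0$, contradicting $U\neq0$. Therefore $\mathfrak{a}'U=0$, which is the asserted second conclusion.

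With $\mathfrak{a}'$ acting as zero and $1_{\mathfrak{a}}$ acting as the identity, the algebra $\mathfrak{a}$ acts on $U$ through $\mathfrak{a}/\mathfrak{a}'\cong\mathbb{C}$, i.e. by scalars. Consequently every $\mathbb{C}$-subspace of $U$ is automatically $\mathfrak{a}$-stable, so the Lie $\mathfrak{a}$-algebroid submodules of $U$ are exactly the $\mathfrak{g}$-submodules; in particular $U$ is irreducible as a module over the Lie algebra $\mathfrak{g}$. Passing to the image $\rho(\mathfrak{g})\subseteq\mathfrak{gl}(U)$, which is a finite-dimensional solvable Lie algebra as a quotient of the solvable algebra $\mathfrak{g}$, Lie's theorem over $\mathbb{C}$ provides a common eigenvector, so the only irreducible modules are one dimensional; hence $\dim U=1$.

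The one step that requires genuine care is the $\mathfrak{g}$-stability of $\mathfrak{a}'U$: this is exactly where hypothesis (ii) enters, since it is what keeps the derivation contribution $(u_0a)w$ inside $\mathfrak{a}'U$. Once that submodule is in hand, the remainder is a routine pairing of Nakayama's lemma with Lie's theorem. (One may equally shorten the verification by first noting that the second axiom together with $\mathfrak{a}'\cdot\mathfrak{g}=0$ from (ii) gives $a(uw)=(a\cdot u)w=0$ for $a\in\mathfrak{a}'$, whence $u(aw)=(u_0a)w\in\mathfrak{a}'U$ directly.)
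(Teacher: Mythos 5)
Your proof is correct, and it is organized genuinely differently from the paper's. The paper first invokes Lie's theorem to produce a common eigenvector $v$ for $\mathfrak{g}$ and then splits into two cases according to whether $\mathfrak{g}$ acts trivially on the line $K=\mathbb{C}v$: in the trivial case it shows $\mathfrak{a}'\cdot v$ is a Lie algebroid submodule and derives a contradiction from $v=\lambda\cdot v$ with $1_{\mathfrak{a}}-\lambda$ invertible (the local-ring form of Nakayama, applied only to the eigenline), while in the nontrivial case it uses the second half of hypothesis (ii), namely $\mathfrak{a}'\cdot\mathfrak{g}=0$, to conclude $\mathfrak{a}'\cdot v=0$ directly. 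You instead prove the stronger statement $\mathfrak{a}'U=0$ up front: the identity $u(aw)=a(uw)+(u_0a)w$ together with $u_0a\in\mathfrak{a}'$ makes $\mathfrak{a}'U$ a Lie algebroid submodule of all of $U$, and Nakayama rules out $\mathfrak{a}'U=U$; only then do you reduce to ordinary representation theory of the solvable Lie algebra $\mathfrak{g}$ and apply Lie's theorem. This eliminates the case analysis, separates the commutative-algebra input cleanly from the Lie-theoretic input, and in fact consumes only the first half of hypothesis (ii) (the condition $u\cdot w=0$ is not needed in your argument, whereas the paper uses it in its second case). The one assumption you should make explicit is that the $\mathfrak{a}$-module structure on a Lie $\mathfrak{a}$-algebroid module is unital and associative; this is the intended reading (it is built into the quotient $\overline{U}(A\oplus\mathfrak{g})$ by the relations $1_A-1$ and $a\cdot a'-aa'$) and is exactly what makes $\mathfrak{a}'U$ an $\mathfrak{a}$-submodule and renders $U$ finitely generated over $\mathfrak{a}$ so that Nakayama applies.
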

\begin{proof} Let $U\neq 0$ be a finite dimensional irreducible Lie $\mathfrak{a}$-algebroid $\mathfrak{g}$-module. Then $U$ is a module for the solvable Lie algebra $\mathfrak{g}$. In addition, $U$ contains a common eigenvector $v$ for all endomorphisms in $\mathfrak{g}$, and $K=\mathbb{C}v$ is a $\mathfrak{g}$-submodule of $U$. For $g\in \mathfrak{g}$, there exists $k_g\in\mathbb{C}$ such that $g\cdot v=k_gv$.

We set $\mathfrak{a}'\cdot v=Span\{\alpha'\cdot v~|~\alpha'\in \mathfrak{a}'\}\subset U$. Clearly, $\mathfrak{a}'\cdot v$ is a $\mathfrak{a}$-module.

\begin{itemize}
\item Case 1: $\mathfrak{g}$ acts trivially on $K$. 

Suppose that $\mathfrak{a}'\cdot v\neq \{0\}$. Notice that for $g\in\mathfrak{g}$, $\alpha\in \mathfrak{a}'$, we have 
$g\cdot (\alpha\cdot v)=\alpha \cdot (g\cdot v)+(g\cdot \alpha)\cdot v=(g\cdot \alpha)\cdot v\in \mathfrak{a}'\cdot v$. Hence, $\mathfrak{a}'\cdot v$ is a $\mathfrak{g}$-module. Since 
\begin{eqnarray*}
&&g\cdot (a\cdot (\alpha\cdot v))-a\cdot (g\cdot (\alpha\cdot v))\\
&&=(g\cdot a)\cdot (\alpha\cdot v)
\end{eqnarray*}
and 
\begin{eqnarray*}
    &&a\cdot (g\cdot (\alpha v))=(a\cdot g)\cdot (\alpha\cdot v)
\end{eqnarray*}
for $g\in\mathfrak{g}, a\in\mathfrak{a},\alpha\in\mathfrak{a}'$, we can conclude that $\mathfrak{a}'\cdot v$ is a Lie $\mathfrak{a}$-algebroid $\mathfrak{g}$-module. Since $U$ is irreducible, $\mathfrak{a}'\cdot v\neq \{0\}$, we can conclude that $U=\mathfrak{a}'\cdot v$. This implies that $v\in\mathfrak{a}'\cdot v$. So, there exists $\lambda\in \mathfrak{a'}$ such that $v=\lambda v$. Equivalently, $(1_{\mathfrak{a}}-\lambda)v=0$. We know that $\lambda$ is not invertible since $\lambda\in \mathfrak{a'}$. Because $\mathfrak{a}$ is local, we can conclude that $1_{\mathfrak{a}}-\lambda$ is invertible. Since $1_{\mathfrak{a}}-\lambda$ is invertible and $(1_{\mathfrak{a}}-\lambda)v=0$, we have $v=0$. This is a contradiction because $v$ is an eigenvector. Hence, $\mathfrak{a}'\cdot v= \{0\}$. Also, $K$ is a module of the Lie $\mathfrak{a}$-algebroid $\mathfrak{g}$. This implies that $U=K$.

\item Case 2: $\mathfrak{g}$ does not trivially on $K$. 

There exists $g'\in \mathfrak{g}$ such that $g'v=k_{g'}v\neq 0$. Notice that $k_{g'} a'\cdot v=a'(g'\cdot v)=(a'\cdot g')\cdot v=0$ for all $a'\in\mathfrak{a}'$. This implies that $K$ is closed under the action of $\mathfrak{a}$, and $a'\cdot v=0$ for all $a'\in\mathfrak{a}'$. In addition, $K$ is a module for the Lie $\mathfrak{a}$-algebroid $\mathfrak{g}$. Since $K$ is a Lie $\mathfrak{a}$-algebroid $\mathfrak{g}$-submodule of $U$ and $U$ is an irreducible Lie $\mathfrak{a}$-algebroid $\mathfrak{g}$-module, we can conclude that $U=K$ is one-dimensional. 
\end{itemize}

\end{proof}

\begin{thm} Let $B$ be a vertex $A$-algebroid as in Theorem \ref{Bcyclicnilpotent}-Theorem \ref{Bcyclicsolvable}, Theorem \ref{3nilcase}-Theorem \ref{3solvablecase3}. If $U$ is a finite dimensional irreducible Lie $A$-algebroid $B/A\partial(A)$-module then there exists $v\in U$ such that $U=\mathbb{C}v$, the unique maximal ideal of $A$ acts as zero on $v$ and $b\cdot v=\lambda v$. Here, $\lambda\in\mathbb{C}$. In addition, $$\{\mathbb{C}v_{\lambda}~|~\lambda\in\mathbb{C},~ b\cdot v_{\lambda}=\lambda v_{\lambda},~\text{ the unique maximal ideal of $A$ acts as zero on $v_\lambda$}\}$$ is the complete set of representatives of equivalence classes of finite dimensional irreducible Lie $A$-algebroid $B/A\partial(A)$-modules, and 
$$\{L(\mathbb{C}v_{\lambda})~|~\lambda\in\mathbb{C},~ b\cdot v_{\lambda}=\lambda v_{\lambda},~\text{ the unique maximal ideal of $A$ acts as zero on $v_\lambda$}\}$$ is the complete set of representatives of equivalence classes of irredudible $\mathbb{N}$-graded $V_B$-modules whose degree zero sub-spaces are finite dimensional. 
\end{thm}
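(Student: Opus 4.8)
The plan is to deduce the entire statement from the already-proved Lemma~\ref{solvabledimension} together with the module dictionary of Proposition~\ref{simplemodulerelations}, so the first task is to record the structural input that is common to all the relevant vertex algebroids. In the discussion opening Section~3 it was shown that $A\partial(A)=\partial(A)$ and that $\{b,\partial(a),\dots,\partial((b_0)^{n-2}a)\}$ is a basis of $B$ while $\{\partial(a),\dots,\partial((b_0)^{n-2}a)\}$ is a basis of $\partial(A)$; consequently the Lie $A$-algebroid $\mathfrak{g}:=B/A\partial(A)=\mathbb{C}\bar b$ is one-dimensional, hence abelian and in particular solvable. Moreover, since the standing hypothesis $Ker(\partial)=\mathbb{C}1_A$ forces $A$ to be local, we may write $A=\mathbb{C}1_A\oplus\mathfrak{a}'$ with $\mathfrak{a}'$ its unique maximal ideal, and each of Theorems~\ref{Bcyclicnilpotent}--\ref{3solvablecase3} identifies this $\mathfrak{a}'$ explicitly and records, in its closing line, the two facts that for $u\in\mathfrak{a}'$ and $w\in\mathfrak{g}$ one has $w_0u\in\mathfrak{a}'$ and $u\cdot w=0$. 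These are precisely hypotheses (i) and (ii) of Lemma~\ref{solvabledimension}.

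With these verifications in hand I would apply Lemma~\ref{solvabledimension} with $\mathfrak{a}=A$ and $\mathfrak{g}=B/A\partial(A)$: every nonzero finite-dimensional irreducible $\mathfrak{g}$-module $U$ is then one-dimensional, $U=\mathbb{C}v$, and $\mathfrak{a}'$ annihilates $v$. Because $\mathfrak{g}=\mathbb{C}\bar b$ acts on the one-dimensional space $U$, the generator $\bar b$ must act by a scalar, which yields $b\cdot v=\lambda v$ for a unique $\lambda\in\mathbb{C}$; this is the first assertion of the theorem.

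For completeness of the list I would then show every scalar is realized. Given $\lambda\in\mathbb{C}$, define $\mathbb{C}v_\lambda$ by letting $1_A$ act as the identity, $\mathfrak{a}'$ act as $0$, and $\bar b$ act as $\lambda$. Checking that this is a Lie $A$-algebroid module reduces to testing the two axioms $u(aw)-a(uw)=(ua)w$ and $a(uw)=(au)w$ on the single basis vector $v$: the second holds because $\mathfrak{a}'\cdot\bar b=0$ (so $(au)$ acts by $0$ for $a\in\mathfrak{a}'$ and as the scalar part for $a\in\mathbb{C}1_A$), and the first holds because $\pi(\bar b)(1_A)=0$ together with $\pi(\bar b)(\mathfrak{a}')\subseteq\mathfrak{a}'$ gives $\pi(\bar b)(A)\subseteq\mathfrak{a}'$, forcing $(ua)$ to act by $0$. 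Distinct values of $\lambda$ give non-isomorphic modules, since $\lambda$ is recovered as the eigenvalue of $\bar b$, so $\{\mathbb{C}v_\lambda\mid\lambda\in\mathbb{C}\}$ is a complete and irredundant set of representatives of the finite-dimensional irreducibles. Finally I would transport this across Proposition~\ref{simplemodulerelations}: taking $H=\{\mathbb{C}v_\lambda\}$ in part~(iv), $\{L(\mathbb{C}v_\lambda)\}$ represents the associated $\mathbb{N}$-graded simple $V_B$-modules, and since $L(U)_{(0)}=U$ these are exactly the simple $V_B$-modules with finite-dimensional degree-zero component—any such $W$ satisfies $W\cong L(W_{(0)})$ with $W_{(0)}$ a finite-dimensional simple $\mathfrak{g}$-module by parts (ii)--(iii) of Proposition~\ref{simplemodulerelations} and Proposition~\ref{prop30}, hence $W_{(0)}\cong\mathbb{C}v_\lambda$ for some $\lambda$.

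Since the genuine analytic content has already been isolated in Lemma~\ref{solvabledimension}, the remaining work is largely bookkeeping, and the step most deserving of care is the \emph{uniform} confirmation that the hypotheses of that lemma hold in each of the six cases—most pointedly that the derivation $\pi(\bar b)$ preserves the maximal ideal and that $\mathfrak{a}'$ acts trivially on $\mathfrak{g}$. This is exactly the information packaged into the final sentences of Theorems~\ref{Bcyclicnilpotent}--\ref{3solvablecase3}, so the main obstacle is organizational rather than computational: ensuring that the one-dimensionality of $\mathfrak{g}$ (hence solvability) and the matching of the finite-dimensionality restriction with the correspondence of Proposition~\ref{simplemodulerelations} are invoked correctly.
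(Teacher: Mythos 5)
Your proposal is correct and follows essentially the same route as the paper: verify that $A$ is local with the maximal-ideal properties recorded at the end of Theorems~\ref{Bcyclicnilpotent}--\ref{3solvablecase3}, that $B/A\partial(A)$ is a one-dimensional (hence solvable) Lie $A$-algebroid, apply Lemma~\ref{solvabledimension}, and then transport the classification through Proposition~\ref{simplemodulerelations}. Your explicit check that every $\lambda\in\mathbb{C}$ is realized by a well-defined Lie $A$-algebroid module $\mathbb{C}v_\lambda$ and that distinct $\lambda$ give inequivalent modules is a detail the paper leaves implicit, but it is a welcome addition rather than a deviation.
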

\begin{proof} Let $B$ be a vertex $A$-algebroid as in Theorem \ref{Bcyclicnilpotent}-Theorem \ref{Bcyclicsolvable}, Theorem \ref{3nilcase}-Theorem \ref{3solvablecase3}. Then $A$ is a local algebra with unique maximal ideal $A'$ such that $A=\mathbb{C}1_A\oplus A'$ (as a vector space). $B/A\partial(A)$ is a one dimensional abelian Lie algebra. Moreover, $B/A\partial(A)$ is a Lie $A$-algebroid. For $u\in A'$, $w\in B/A\partial(A)$, $w_0u\in A'$ and $u\cdot w=0+A\partial(A)$. By Lemma \ref{solvabledimension}, we can conclude that every finite dimensional irreducible Lie $A$-algebroid $B/A\partial(A)$-module $U\neq 0$ is one dimensional. In addition, $A'$ acts as zero on $U$. In addition, $$\{\mathbb{C}v_{\lambda}~|~\lambda\in\mathbb{C},~ b\cdot v_{\lambda}=\lambda v_{\lambda},~\text{ the unique maximal ideal of $A$ acts as zero on $v_\lambda$}\}$$ is the complete set of representatives of equivalence classes of finite dimensional irreducible Lie $A$-algebroid $B/A\partial(A)$-modules. By Proposition \ref{simplemodulerelations}, we can conclude further that $$\{L(\mathbb{C}v_{\lambda})~|~\lambda\in\mathbb{C},~ b\cdot v_{\lambda}=\lambda v_{\lambda},~\text{ the unique maximal ideal of $A$ acts as zero on $v_\lambda$}\}$$ is the complete set of representatives of equivalence classes of irredudible $\mathbb{N}$-graded $V_B$-modules whose degree zero sub-spaces are finite dimensional.
\end{proof}
%%%%%%%%%%%%%%%%%%%%%%%%%%%%%%%%%%%%%%%%%%%%%%%%%%%%%%%%%%%%%%%%%%%%%

%%%%%%%%%%%%%%%%%%%%%%%%%%%%%%%%%%%%%%%%%%%%%%%%%%%%%%%%%%%%%%%%%%%%%%%
\subsection{On a connection between vertex algebras $V_B$ and a vertex algebra associated with a rank one Heisenberg}\label{relationtoHeisenberg}

In this section, we connect the vertex algebra $V_B$ that we study with a well-known vertex operator algebra associated with a rank one Heisenberg. First, we briefly review construction of vertex algebra associated with rank one Heisenberg algebra. Next, we show that one can construct a Heisenberg vertex operator algebra from certain vertex algebras $V_B$. 

%\subsection{The Heisenberg vertex operator algebra} 

We denote by $\mathfrak{h}$ a one-dimensional abelian Lie algebra spanned by $h$ with a bilinear form $\langle\cdot,\cdot\rangle$ such that $\langle h,h\rangle=1$. We set $$\hat{\mathfrak{h}}=\mathfrak{h}\otimes \mathbb{C}[t,t^{-1}]\oplus\mathbb{C}{\bf{k}}$$ the affinization of $\mathfrak{h}$ with bracket relations: for $a,b\in\mathfrak{h},m, n\in\mathbb{Z}$, 
$$[a(m),b(n)]=m\langle a,b\rangle \delta_{m+n,0}{\bf{k}},~[{\bf{k}},a(m)]=0.$$ Here we define $a(m)=a\otimes t^m$ for $a\in\mathfrak{h}$, $m\in\mathbb{Z}$. We let $$\hat{\mathfrak{h}}^+=\mathfrak{h}\otimes t\mathbb{C}[t]\text{ and }\hat{\mathfrak{h}}^-=\mathfrak{h}\otimes t^{-1}\mathbb{C}[t^{-1}].$$ The vector spaces $\hat{\mathfrak{h}}^+$ and $\hat{\mathfrak{h}}^-$ are abelian subalgebras of $\hat{\mathfrak{h}}$. 

Now, we consider the induced $\hat{\mathfrak{h}}$-module given by $$M(1)={\mathcal{U}}(\hat{\mathfrak{h}})\otimes_{{\mathcal{U}}(\mathbb{C}[t]\otimes \mathfrak{h}\oplus{\mathbb{C}}{\bf k})}\mathbb{C}{\bf 1}\cong S(\hat{\mathfrak{h}}^-).$$ Here $\mathcal{U}(\cdot)$ and $S(\cdot)$ denote the universal enveloping algebra and symmetric algebra, respectively. Also, $\mathfrak{h}\otimes \mathbb{C}[t]$ acts trivially on $\mathbb{C}{\bf 1}$ and $\bf{k}$ acts as multiplication by 1. Then $M(1)$ is a vertex algebra that is often called the vertex operator algebra associated to the rank one Heisenberg algebra or simply the rank one Heisenberg vertex operator algebra, or the one free boson vertex operator algebra. 

Any element of $M(1)$ can be expressed as a linear combination of elements of the form $$h(-k_1).....h(-k_j){\bf 1}\text{ with }k_1\geq...\geq k_j\geq 1,\text{ for }j\in\mathbb{N}.$$ 
It is known that $M(1)$ is simple and has infinitely many non-isomorphic irreducible modules (cf. \cite{FLM2, Gu}). In addition, the indecomposable modules have been completely determined in \cite{Mil}.
%\begin{rmk} $M(1)$ is in fact a vertex algebra associated with vertex $A$-algebroid $B$ with $A=\mathbb{C}{\bf 1}$ and $B=\mathfrak{h}$.
%\end{rmk}

\begin{thm} Let $A$ be a unital commutative associative algebra with the identity $1_A$. Let $B$ be a vertex $A$-algebroid that satisfies assumptions in either Theorem \ref{Bcyclicsolvable} when $\alpha_2\neq 0$ or Theorem \ref{3solvablecase1} when $\gamma_1+1\neq 0$ or Theorem \ref{3solvablecase2} when $\gamma_1+1\neq 0$ or Theorem \ref{3solvablecase3} when $\gamma_0+\gamma_1+1\neq 0$ hold. If $\mathfrak{a}'$ is the unique maximal ideal of the local algebra $A$ then the ideal $(\mathfrak{a}')$ is the maximal ideal of $V_B$. In addition, $V_B/(\mathfrak{a}')$ is isomorphic to the rank one Heisenberg vertex algebra $M(1)$. 
\end{thm}
\begin{proof} Assume that $B$ is a vertex $A$-algebroid that satisfies assumptions in either Theorem \ref{Bcyclicsolvable} when $\alpha_2\neq 0$ or Theorem \ref{3solvablecase1} when $\gamma_1+1\neq 0$ or Theorem \ref{3solvablecase2} when $\gamma_1+1\neq 0$ or Theorem \ref{3solvablecase3} when $\gamma_0+\gamma_1+1\neq 0$ hold. 

Let $\mathfrak{a}'$ be the unique maximal ideal of $A$. Recall that $A=\mathbb{C}1_A\oplus \mathfrak{a}'$. In addition, $B=\mathbb{C} b\oplus\partial(A)$. Let $\bar{b}\in\mathbb{C}^{\times} b$ such that $\bar{b}_1\bar{b}=1_A+a'$ where $a'\in \mathfrak{a}'$. Notice that $(V_B/(\mathfrak{a}'))_0=\mathbb{C}(1_A+(\mathfrak{a}')$ and for $n\geq 1$ $(V_B/(\mathfrak{a}'))_n=Span_{\mathbb{C}}\{\bar{b}(-n_1)...\bar{b}(-n_k){\bf 1}+(\mathfrak{a}')~|~n_1\geq ....\geq n_k\geq 1,~n_1+...+n_k=n\}$. Note ${\bf 1}=1_A$.

Let $\mathfrak{h}$ be a one-dimensional abelian Lie algebra spanned by $h$ with a linear form $\langle\cdot,\cdot\rangle$ such that $\langle h,h\rangle=1$. Let $f:M(1)\rightarrow V_B/(\mathfrak{a}')$ be a linear map defined by 
\begin{eqnarray*}
f({\bf 1})&=&1_A+(\mathfrak{a}'),\\
f(h(-n_1)...h(-n_k){\bf 1})&=&\bar{b}(-n_1)...\bar{b}(-n_k)1_A+(\mathfrak{a}'),\text{ and }\\
fY(h(-1){\bf 1},x)&=&Y(f(h(-1){\bf 1}),x)f.
\end{eqnarray*}
Notice that we have $ fY({\bf 1},x)=Y(1_A,x)f$ and $f$ is onto. Now, we will show that $f$ is a vertex algebra homomorphism. We set 
$C(f)=\{v\in M(1)~|~Y(f(v),x)f=fY(v,x)\}$. To show that $f$ is a vertex algebra homomorphism. It is enough to show that $C(f)=M(1)$. Clearly, ${\bf 1}, h(-1){\bf 1}\in C(f)$. Let $u,v\in C(f)$ and $n\in\mathbb{Z}$. Using the fact that $Y(u_nv,x)=\Res_{x_1}((x_1-x)^nY(u,x_1)Y(v,x)-(-x+x_1)^nY(v,x)Y(u,x_1))$, we then have that 
\begin{eqnarray*}
    &&fY(u_nv,x)\\
    &=&\Res_{x_1}((x_1-x)^nfY(u,x_1)Y(v,x)-(-x+x_1)^nfY(v,x)Y(u,x_1))\\
    &=&\Res_{x_1}((x_1-x)^nY(f(u),x_1)fY(v,x)-(-x+x_1)^nY(f(v),x)fY(u,x_1))\\
    &=&\Res_{x_1}((x_1-x)^nY(f(u),x_1)Y(f(v),x)f-(-x+x_1)^nY(f(v),x)Y(f(u),x_1)f)\\
    &=&Y(f(u)_nf(v),x)f.
\end{eqnarray*}
Hence, $u_nv\in C(f)$. In addition, $C(f)=M(1)$ and $f$ is a homomorphism. Since $M(1)$ is simple, we can conclude further that $f$ is an isomorphism and $(\mathfrak{a}')$ is a maximal ideal of $V_B$. 
\end{proof}

%%%%%%%%%%%%%%%%%%%%%%%%%%%%%%%%%%%%%%%%%%%%%%%%%%%%%%%%%%%%%%%%

\end{document}